\newcolumntype{P}[1]{>{\centering\arraybackslash}p{#1}}
\newtheorem{theorem} {Theorem}
\newtheorem{lemma} {Lemma}
\newtheorem{definition} {Definition}
\newtheorem{assumption} {Assumption}
\newtheorem{remark} {Remark}
\def\1{{\mathbf{1}}}
\def\u{{\mathbf{u}}}
\def\v{{\mathbf{v}}}
\def\q{{\mathbf{q}}}
\def\z{{\mathbf{z}}}
\def\w{{\mathbf{w}}}
\def\X{{\mathbf{X}}}
\def\Y{{\mathbf{Y}}}
\def\A{{\mathbf{A}}}
\def\M{{\mathbf{M}}}
\def\I{{\mathbf{I}}}
\def\B{{\mathbf{B}}}
\def\V{{\mathbf{V}}}
\def\Z{{\mathbf{Z}}}
\def\W{{\mathbf{W}}}
\def\Q{{\mathbf{Q}}}
\def\P{{\mathbf{P}}}
\DeclareMathOperator*{\argmin}{arg\,min}
\DeclareMathOperator*{\argmax}{arg\,max}
\newcommand{\R}{\mathbf{R}}
\newcommand{\mV}{\mathcal{V}}
\newcommand{\mF}{\mathcal{F}}
\newcommand{\mP}{\mathcal{P}}
\newcommand{\mS}{\mathcal{S}}
\newcommand{\mbS}{\mathbb{S}}
\newcommand{\trace}{\textrm{Tr}}
\newcommand{\rank}{\textrm{rank}}
\newcommand{\conv}{\textrm{conv}}
\newcommand{\reals}{\mathbb{R}}
\title{Local Linear Convergence of Gradient Methods for  Subspace Optimization via Strict Complementarity}
\author{%
  Ron Fisher\\
Technion - Israel Institute of Technology \\
Haifa, Israel 3200003 \\
\texttt{ronfisher@campus.technion.ac.il}\\
   \And
  Dan Garber \\
Technion - Israel Institute of Technology \\
Haifa, Israel 3200003 \\
\texttt{dangar@technion.ac.il}\\
}
\begin{document}

\maketitle

\begin{abstract}
We consider optimization problems in which the goal is find a $k$-dimensional subspace of $\mathbb{R}^n$, $k<<n$, which minimizes a convex and smooth loss. Such problems generalize the fundamental task of principal component analysis (PCA) to include robust and sparse counterparts, and logistic PCA for binary data, among others. This problem could be approached either via nonconvex gradient methods with highly-efficient iterations, but for which arguing about fast convergence to a global minimizer is difficult or, via a convex relaxation for which arguing about convergence to a global minimizer is straightforward, but the corresponding methods are often inefficient in high dimensions. In this work we bridge these two approaches under a strict complementarity assumption, which in particular implies that the optimal solution to the convex relaxation is unique and is also the optimal solution to the original nonconvex problem. Our main result is a proof that a natural nonconvex gradient method which is \textit{SVD-free} and requires only a single QR-factorization of an $n\times k$ matrix per iteration, converges locally with a linear rate. We also establish linear convergence results for the nonconvex projected gradient method, and the Frank-Wolfe method when applied to the convex relaxation. 
\end{abstract}

\section{Introduction}\label{sec:intro}
We consider the problem of finding a $k$-dimensional subspace of $\mathbb{R}^n$, $k<<n$, which minimizes a given objective function, where we identify a subspace with its corresponding projection matrix. That is, we consider the following optimization problem:
\begin{align}\label{eq:optProbNonconv}
\min_{}f(\X) \quad \textrm{subject to} \quad \X\in\mP_{n,k} := \{\Q\Q^{\top} ~|~\Q\in\reals^{n\times k},~\Q^{\top}\Q=\I\}.
\end{align}
Throughout this work and unless stated otherwise, we assume that $f(\cdot)$ is convex, $\beta$-smooth (gradient Lipschitz) and, for ease of presentation, we also assume that  the gradient $\nabla{}f(\cdot)$ is a symmetric matrix over the space of $n\times n$ symmetric matrices $\mbS^n$\footnote{in case the gradient is not a symmetric matrix at some point $\X\in\mbS^n$, then denoting it by $\nabla_{\textrm{nonsym}}f(\X)$, we can always take its symmetric counterpart $\nabla{}f(\X) = \frac{1}{2}(\nabla_{\textrm{nonsym}}f(\X)+\nabla_{\textrm{nonsym}}f(\X)^{\top})$ and, unless stated otherwise, our derivations throughout this work will remain the same}.

Problems of interest that fall into this model include among others robust counterparts of PCA, which are based on the smooth and convex Huber loss (see concrete examples in Section \ref{sec:experiments}), logistic PCA \cite{landgraf2020}, and sparse PCA \cite{vu2013fantope}. Note that in Problem \eqref{eq:optProbNonconv} we are interested in the low-dimensional subspace itself (as opposed to the projection of the data onto it, as in many other formulations), which is important for instance when the end goal is to perform dimension reduction, which is one of the most important applications of PCA-style methods.

Motivated by high-dimensional problems, we are interested in highly efficient (in particular in terms of the dimension $n$) first-order methods for Problem \eqref{eq:optProbNonconv}. Moreover, we are interested in establishing, at least locally, fast convergence to the global minimizer, despite the fact that Problem \eqref{eq:optProbNonconv} is nonconvex. Subspace recovery/optimization problems similar to Problem \eqref{eq:optProbNonconv} have received significant interest in recent years, see for instance \cite{shamir2016fast, zhang2016global, chen2015fast, lerman2018overview, maunu2019well, hardt2013algorithms, lerman2018fast, raghavendra2020list, rahmani2016randomized} however, different from these works, our approach will not assume that $f(\cdot)$ admits a very specific structure (e.g., a linear or quadratic function), or will be based on a specific underlying statistical model. Instead, we will be interested in deterministic conditions that may hold for quite general $f(\cdot)$ (which is convex and smooth), and may render quite a wide variety of  problems ``well-posed'' for efficient optimization. 

We begin by briefly describing two natural dimension-efficient first-order methods for tackling Problem \eqref{eq:optProbNonconv}. 
One  such method is the nonconvex projected gradient method which follows the dynamics: 
\begin{align}\label{eq:nonconvProjGrad}
\X_{t+1} \gets \Pi_{\mP_{n,k}}[\X_t - \eta_t\nabla{}f(\X_t)],
\end{align} 
where $\Pi_{\mP_{n,k}}[\cdot]$ denotes the Euclidean projection onto the set $\mP_{n,k}$ (note that since this set is nonconvex, in general, the projection need not be unique), and $\eta_t > 0$ is the step-size. 
Given the gradient $\nabla{}f(\X_t)$, the runtime to compute $\X_{t+1}$ is dominated by the computation of the projection. It is well known that the Euclidean projection is given by the projection matrix which corresponds to the span of the top $k$ eigenvectors of the matrix $\X_t-\eta_t\nabla{}f(\X_t)$. While accurate computation of this projection requires a (thin) singular value decomposition (SVD) of a $n\times n$ matrix, which amounts to $O(n^3)$ runtime, it can also be approximated up to sufficiently small error using fast iterative methods, such as the well-known orthogonal iteration method \cite{golub1996matrix} (aka subspace iteration method \cite{saad2011numerical}). The orthogonal iteration method finds a $n\times k$ matrix $\Q$ with orthonormal columns which approximately span the subspace spanned by the $k$ leading eigenvectors of a given positive semidefinite $n\times n$ matrix $\A$, by repeatedly applying the iterations: $(\Q,\R)\gets\textsc{QR-Factorize}(\A\Q)$, where $\textsc{QR-factorize}(\cdot)$ denotes the QR factorization of a matrix, i.e., $\Q\in\reals^{n\times k}$ has orthonormal columns. Every iteration of this method takes in worst case only $O(kn^2)$ time. When the gradient $\nabla{}f(\X_t)$ admits a favorable structure such as sparsity or a low-rank factorization, the runtime to approximate the projection onto $\mP_{n,k}$ using the orthogonal iteration method could be significantly improved.

Another natural approach to tackle Problem \eqref{eq:optProbNonconv} is to exploit the fact that each $\X\in\mP_{n,k}$ could be factored as $\X=\Q\Q^{\top}$, $\Q\in\reals^{n\times k}$ having orthonormal columns, and to apply gradient steps w.r.t. this factorization. This leads to the following dynamics, which we refer to as \textit{Gradient Orthogonal Iteration}:
\begin{align}\label{eq:QRgrad}
&\Z_{t+1} \gets \Q_t - \eta_t\frac{\partial{}f(\Q\Q^{\top})}{\partial\Q}\Big|_{\Q_t}  = \Q_t -\eta_t\nabla{}f(\Q_t\Q_t^{\top})\Q_t,  
  \nonumber \\ 
&(\Q_{t+1},\mathbf{R}_{t+1}) \gets \textsc{QR-factorize}(\Z_{t+1}),
\end{align} 
where the QR-factorization step is required to ensure that $\Q_{t+1}\Q_{t+1}^{\top}$ is also a projection matrix. 

As opposed to the Dynamics \eqref{eq:nonconvProjGrad}, which as discussed, an efficient implementation of will require to run a QR-based iterative method to compute the Euclidean projection onto $\mP_{n,k}$ on each iteration, the Dynamics \eqref{eq:QRgrad} only requires a single QR factorization per iteration, and thus, given the gradient matrix $\nabla{}f(\Q_t\Q_t^{\top})$, the next iterate $\Q_{t+1}$ can be computed in overall $O(n^2k)$ time. As mentioned above, this runtime could be further significantly improved if the multiplication $\nabla{}f(\Q_t\Q_t)\Q_t$ could be carried out faster than $O(n^2k)$ (for instance when the gradient is sparse or admits a low-rank factorization), since all other operations require only $O(k^2n)$ time (e.g., factorizing of $\Z_{t+1}$). 

\begin{center}
\textit{Obtaining provable guarantees on the fast local convergence of the Dynamics \eqref{eq:QRgrad} to a global optimal solution of Problem \eqref{eq:optProbNonconv} is the main contribution of this work.}
\end{center}
While both Dynamics \eqref{eq:nonconvProjGrad}, \eqref{eq:QRgrad} apply efficient iterations, since they are inherently nonconvex, arguing about their convergence to a global optimal solution of \eqref{eq:optProbNonconv} is difficult in general. An alternative is to replace Problem \eqref{eq:optProbNonconv} with a convex counterpart, for which, arguing about the convergence of first-order methods to a global optimal solution is well understood.  Consider the convex set $\mF_{n,k} = \conv(\mP_{n,k})$, where $\conv(\cdot)$ denotes the convex-hull operation. $\mF_{n,k}$ is also called the \textit{Fantope} and it is known to admit the following important  characterization: $\mF_{n,k} = \{\X\in\mbS^n~|~\I\succeq\X\succeq{}0, \trace(\X)=k\}$, where $\A\succeq 0$ denotes that $\A$ is a positive semidefinite matrix (PSD), see for instance \cite{overton1992sum}. This leads to the convex problem:
\begin{align}\label{eq:optProbConv}
\min_{}f(\X) \quad \textrm{subject to} \quad  \X\in\mF_{n,k}  = \{\X\in\mbS^n ~|~\I\succeq\X\succeq{}0,\trace(\X)=k\}.
\end{align}
A well known first-order method applicable to \eqref{eq:optProbConv} is the Frank-Wolfe method (aka conditional gradient) \cite{jaggi2013revisiting}, which for the convex  Problem \eqref{eq:optProbConv} follows the dynamics:
\begin{align}\label{eq:FrankWolfe}
\V_{t} &\gets \argmin_{\V\in\mP_{n,k}}\trace(\V\nabla{}f(\X_t)), \quad\X_{t+1} \gets (1-\eta_t)\X_t + \eta_t\V_{t}, ~\eta_t\in[0,1].
\end{align}
It follows from Ky Fan's maximum principle \cite{fan1949theorem} that computing $\V_{t}$ amounts to computing the projection matrix onto the span of the $k$ eigenvectors corresponding to the $k$ smallest eigenvalues of $\nabla{}f(\X_t)$, and hence can be carried out efficiently using the orthogonal iterations method or similar methods, similarly to the computation of the projection in \eqref{eq:nonconvProjGrad}) discussed above. \footnote{We note that one can also consider projection-based first-order methods for Problem \eqref{eq:optProbConv}, such as the projected gradient method, however in general, the projection onto the Fantope $\mF_{n,k}$ will not be a low-rank matrix and hence its computation will require an expensive SVD computation (see details in the sequel).} Note however that the Frank-Wolfe iterates will not be, in general, low rank, and only yield a $O(1/t)$ convergence rate \cite{jaggi2013revisiting}.

\subsection{The eigengap assumption and strict complementarity}
We now turn to discuss our only non-completely standard assumption on Problems \eqref{eq:optProbNonconv}, \eqref{eq:optProbConv}, which will underly all of our contributions, and in particular will facilitate our local linear convergence rates.

\begin{assumption}[Main assumption]\label{ass:gap}
An optimal solution $\X^*$ to the convex Problem \eqref{eq:optProbConv} is said to satisfy the eigen-gap assumption with parameter $\delta >0$, if $\lambda_{n-k}(\nabla{}f(\X^*)) - \lambda_{n-k+1}(\nabla{}f(\X^*)) \geq \delta$. 
\end{assumption}


Assumption \ref{ass:gap} in particular implies the following theorem which states that the convex relaxation \eqref{eq:optProbConv} exactly recovers the unique and optimal solution to the nonconvex Problem \eqref{eq:optProbNonconv}. This is one aspect in which  Assumption \ref{ass:gap} captures ``well-posed'' instances of Problem \eqref{eq:optProbNonconv}. The proof is  in the appendix.

\begin{theorem}\label{thm:optSol}
If an optimal solution $\X^*$ to Problem  \eqref{eq:optProbConv} satisfies Assumption \ref{ass:gap} with some parameter $\delta >0$, then it has rank $k$, i.e., $\X^*\in\mP_{n,k}$, and it is the unique optimal solution to both Problem \eqref{eq:optProbConv} and Problem \eqref{eq:optProbNonconv}.
\end{theorem}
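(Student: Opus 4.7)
The plan is to exploit the first-order optimality condition for the convex Problem \eqref{eq:optProbConv} together with Ky Fan's maximum principle to pin down the structure of $\X^*$ from the strict eigengap alone. Concretely, since $f$ is convex and $\X^*$ minimizes $f$ over the convex set $\mF_{n,k}$, we have $\langle \nabla f(\X^*), \X - \X^* \rangle \geq 0$ for every $\X\in\mF_{n,k}$, which is equivalent to $\X^* \in \argmin_{\X\in\mF_{n,k}} \langle \nabla f(\X^*), \X\rangle$. By Ky Fan's theorem the value of this linear minimization is the sum of the $k$ smallest eigenvalues of $\nabla f(\X^*)$, and the set of minimizers consists exactly of the projection matrices onto $k$-dimensional subspaces spanned by eigenvectors corresponding to those $k$ smallest eigenvalues.

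Next I would invoke Assumption \ref{ass:gap}. The strict separation $\lambda_{n-k}(\nabla f(\X^*)) - \lambda_{n-k+1}(\nabla f(\X^*)) \geq \delta > 0$ forces the eigenspace associated with the $k$ smallest eigenvalues of $\nabla f(\X^*)$ to be a single well-defined $k$-dimensional subspace, and hence the orthogonal projector onto it is a unique rank-$k$ matrix in $\mP_{n,k}$. Since $\X^*$ lies in this singleton argmin, we conclude immediately that $\X^*\in\mP_{n,k}$ and $\rank(\X^*)=k$.

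To upgrade this to full uniqueness for the convex problem, suppose $\Y^*\in\mF_{n,k}$ is another minimizer of $f$. Combining the two convexity inequalities $f(\Y^*)\geq f(\X^*)+\langle\nabla f(\X^*),\Y^*-\X^*\rangle$ with $f(\Y^*)=f(\X^*)$ and the first-order optimality of $\X^*$ applied at $\Y^*$, I obtain $\langle \nabla f(\X^*),\Y^*-\X^*\rangle = 0$. Thus $\Y^*$ also attains the minimum of the linear function $\langle \nabla f(\X^*),\cdot\rangle$ over $\mF_{n,k}$, but by the previous paragraph that minimizer is unique, forcing $\Y^*=\X^*$.

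Finally, uniqueness for the nonconvex Problem \eqref{eq:optProbNonconv} follows almost for free: $\mP_{n,k}\subseteq\mF_{n,k}$ so $\min_{\mP_{n,k}} f \geq \min_{\mF_{n,k}} f$, but we just showed $\X^*\in\mP_{n,k}$, which yields equality of the two minima and makes $\X^*$ a minimizer over $\mP_{n,k}$; any other minimizer in $\mP_{n,k}$ would also be a minimizer over $\mF_{n,k}$, contradicting the uniqueness established above. I do not anticipate a serious technical obstacle here; the only point that requires care is the two-step logic connecting the eigengap to uniqueness of $\X^*$ itself (not merely uniqueness of the invariant subspace), which is why the convexity-based identity $\langle\nabla f(\X^*),\Y^*-\X^*\rangle=0$ is the key link.
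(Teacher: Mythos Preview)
Your argument is correct and follows essentially the same route as the paper: both reduce the question to showing that $\X^*$ is the unique minimizer of the linear functional $\langle \nabla f(\X^*),\cdot\rangle$ over $\mF_{n,k}$, which under the eigengap is the rank-$k$ projector onto the bottom-$k$ eigenspace. The paper packages the identification of $\X^*$ with this projector into a separate lemma (proved by contradiction with first-order optimality) and then obtains uniqueness via the von Neumann trace inequality, whereas you invoke Ky Fan directly and use the cleaner observation that any other minimizer $\Y^*$ of $f$ must satisfy $\langle\nabla f(\X^*),\Y^*-\X^*\rangle=0$; these are equivalent.

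One imprecision worth tightening: your sentence ``the set of minimizers consists exactly of the projection matrices\ldots'' is not what Ky Fan's theorem says in general---without the gap, the argmin of a linear functional over $\mF_{n,k}$ is a face and can contain non-projection matrices. The correct logic is: the argmin over $\mF_{n,k}$ is a face, hence the convex hull of its extreme points in $\mP_{n,k}$; under the gap the extreme-point minimizer is unique, so the face collapses to a singleton. Your subsequent paragraph effectively uses this, so the overall argument stands, but the phrasing should be adjusted.
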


Assumption \ref{ass:gap} is  tightly related to the convex Problem \eqref{eq:optProbConv} through the concept of \textit{strict-complementarity}, which is a classical concept in constrained continuous optimization theory  \cite{alizadeh1997complementarity}. A similar connection between an eigengap in the gradient at an optimal solution and strict complementarity has been already established in \cite{ding2020spectral} for low-rank matrix optimization problems, where the underlying convex set is either the nuclear norm ball of matrices or the set of PSD matrices with unit trace. Now we establish a similar relationship for the convex relaxation \eqref{eq:optProbConv} and the Fantope, which is slightly more involved.
Let us write the Lagrangian of the convex Problem \eqref{eq:optProbConv}:
\begin{align*}
L(\X,\Z_1,\Z_2,s) = f(\X)-\langle \Z_{1}, \X \rangle - \langle \Z_{2}, \I-\X  \rangle - s(\trace(\X)-k),
\end{align*}
where the dual matrix variables $\Z_1,\Z_2$ are constrained to be PSD, i.e., $\Z_1\succeq{}0,\Z_2\succeq{}0$.

The KKT conditions state that $\X^*$, $(\Z_1^*,\Z_2^*,s^*)$ are corresponding optimal primal-dual solutions if and only if the following conditions hold:
\begin{align*}
&1.~ \I\succeq\X^*\succeq 0, \trace(\X^*) =k, \Z_1^*\succeq 0, \Z_2^*\succeq 0, \quad 2.~ \nabla{}f(\X^*) = \Z_1^* - \Z_2^* + s^*\I, \\
&3.~ \langle{\Z_1^*,\X^*}\rangle = \langle{\Z_2^*,\I-\X^*}\rangle = 0.
\end{align*}
Condition 3 is known as \textit{complementarity}. Since $ \Z_{1}^{*}, \Z_{2}^{*}$ are PSD and $ 0 \preceq \X^{*} \preceq \I$, this further implies that
$\Z_{1}^{*}\X^{*}= \mathbf{0},  \Z_{2}^{*}(\I-\X^{*})=\mathbf{0}$, 
which in turn implies that
\begin{align*}
\textrm{range}(\X^{*})\subseteq \textrm{nullspace}(\Z_{1}^{*})~ \wedge ~\textrm{range}(\I-\X^{*})\subseteq \textrm{nullspace}(\Z_{2}^{*}).
\end{align*}

\begin{definition} \label{strict_comp_def}
A pair of primal-dual solutions $\X^*$, $(\Z_1^*,\Z_2^*,s^*)$ for Problem \eqref{eq:optProbConv} is said to satisfy \textit{strict complementarity}, if 
$\textrm{range}(\X^{*})=\textrm{nullspace}(\Z_{1}^{*}) \vee \textrm{range}(\I-\X^{*})= \textrm{nullspace}(\Z_{2}^{*})$,
which is the same as:
$\rank(\Z_{1}^{*}) = n-\rank(\X^*) \vee \rank(\Z_{2}^{*}) = \rank(\X^*)$.
\end{definition}
\begin{theorem}\label{thm:strictComp}
If an optimal solution $\X^*$ for Problem \eqref{eq:optProbConv} with $\rank(\X^*) = k$ satisfies strict complementarity for some corresponding dual solution, then $\lambda_{n-k}(\nabla{}f(\X^*)) -$
\newline $\lambda_{n-k+1}(\nabla{}f(\X^*)) > 0$.  Conversely, if an optimal solution $\X^*$ for Problem \eqref{eq:optProbConv} satisfies 
\newline $\lambda_{n-k}(\nabla{}f(\X^*)) - \lambda_{n-k+1}(\nabla{}f(\X^*)) > 0$, then it satisfies strict complementarity for every 
\newline corresponding dual solution.

\end{theorem}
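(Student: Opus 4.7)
The plan is to extract an explicit block structure for the dual variables from complementarity, substitute it into the stationarity condition $\nabla{}f(\X^*) = \Z_1^* - \Z_2^* + s^*\I$, and then read off the eigenvalues of $\nabla{}f(\X^*)$ in closed form. Once this is done, both directions reduce to a trivial positivity check.

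First I would observe that since $\X^*\in\mF_{n,k}$, its eigenvalues lie in $[0,1]$ and sum to $k$. If in addition $\rank(\X^*)=k$ (which is the hypothesis of the first direction, and which follows from Theorem~\ref{thm:optSol} in the second direction), then all $k$ nonzero eigenvalues must equal $1$, so $\X^*=\U\U^{\top}$ for some $\U\in\reals^{n\times k}$ with $\U^{\top}\U=\I$. Let $\U_\perp\in\reals^{n\times(n-k)}$ complete $\U$ to an orthogonal matrix; then $\I-\X^*=\U_\perp\U_\perp^{\top}$. From the complementarity conditions $\Z_1^*\X^*=0$ and $\Z_2^*(\I-\X^*)=0$, together with $\Z_1^*,\Z_2^*\succeq 0$, I get
\begin{equation*}
\Z_1^*=\U_\perp\B_1\U_\perp^{\top},\qquad \Z_2^*=\U\B_2\U^{\top},
\end{equation*}
for some PSD matrices $\B_1\in\reals^{(n-k)\times(n-k)}$ and $\B_2\in\reals^{k\times k}$.

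Plugging this into the stationarity condition and using $\I=\U\U^{\top}+\U_\perp\U_\perp^{\top}$, I obtain the block-diagonal representation in the basis $[\U,\U_\perp]$:
\begin{equation*}
\nabla{}f(\X^*) = \U\bigl(s^*\I_k-\B_2\bigr)\U^{\top} + \U_\perp\bigl(s^*\I_{n-k}+\B_1\bigr)\U_\perp^{\top}.
\end{equation*}
Hence the $n$ eigenvalues of $\nabla{}f(\X^*)$ are $\{s^*-\mu_i\}_{i=1}^{k}\cup\{s^*+\nu_j\}_{j=1}^{n-k}$, where $\mu_i\geq 0$ are the eigenvalues of $\B_2$ and $\nu_j\geq 0$ those of $\B_1$. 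Since all values of the first family lie below $s^*$ and all of the second above $s^*$, the $k$ smallest eigenvalues of $\nabla{}f(\X^*)$ are exactly the $s^*-\mu_i$. Therefore
\begin{equation*}
\lambda_{n-k+1}(\nabla{}f(\X^*))=s^*-\min_i\mu_i,\qquad \lambda_{n-k}(\nabla{}f(\X^*))=s^*+\min_j\nu_j,
\end{equation*}
and the gap becomes $\lambda_{n-k}-\lambda_{n-k+1}=\min_j\nu_j+\min_i\mu_i$.

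The conclusion is immediate on both sides. For the first direction, strict complementarity for some dual solution means either $\rank(\Z_1^*)=n-k$, forcing $\B_1\succ 0$ and so $\min_j\nu_j>0$, or $\rank(\Z_2^*)=k$, forcing $\B_2\succ 0$ and so $\min_i\mu_i>0$; in either case the sum, hence the gap, is strictly positive. For the converse, the gap identity shows $\min_j\nu_j+\min_i\mu_i>0$ for \emph{every} corresponding dual, which forces at least one of the PSD blocks to be full rank, i.e., strict complementarity. The only mildly subtle step is the identification of $s^*-\mu_i$ with the \emph{smallest} $k$ eigenvalues (as opposed to arbitrary ones), but this follows immediately from the sign comparison $s^*-\mu_i\leq s^*\leq s^*+\nu_j$; no further work is needed.
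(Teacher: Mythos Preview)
Your proof is correct and follows the same overall strategy as the paper's: use the KKT stationarity condition $\nabla f(\X^*)=\Z_1^*-\Z_2^*+s^*\I$ to exhibit $\nabla f(\X^*)$ in block-diagonal form, read off the eigenvalues, and reduce both directions to a sign check. The one difference is in how the block structure is obtained. The paper first proves, via a short trace computation using complementarity, that $\langle \Z_1^*,\Z_2^*\rangle=0$ for \emph{any} primal-dual pair (without using $\rank(\X^*)=k$), and then completes the union of their eigenvectors to an orthonormal basis of $\reals^n$; this yields the eigenvalue list $\rho_i+s^*,\;s^*,\;s^*-\mu_j$ with a possible block of $s^*$'s in the middle. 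You instead invoke $\rank(\X^*)=k$ at the outset to write $\X^*=\U\U^{\top}$, and then complementarity immediately pins $\Z_1^*$ and $\Z_2^*$ to the complementary blocks $\U_\perp\U_\perp^{\top}$ and $\U\U^{\top}$, which is slightly more direct under the stated hypothesis and gives the clean gap identity $\lambda_{n-k}-\lambda_{n-k+1}=\min_j\nu_j+\min_i\mu_i$. The paper's orthogonality observation $\Z_1^*\perp\Z_2^*$ is a bit more general (it does not need $\X^*$ to be a projection), but for the theorem as stated your shortcut loses nothing.
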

The proof is given in the appendix.
Strict complementarity has played a central role in several recent works, both for establishing linear convergence rates for first-order methods, e.g., \cite{zhou2017unified, drusvyatskiy2018error, garber2019linear, ding2020spectral}, and improving the runtime of projected gradient methods due to SVD computations, for low-rank matrix optimization problems, e.g., \cite{garber2021convergence, kaplan2021low}.  

\subsection{Additional related work}
Efficient gradient methods for low-rank nonconvex optimization have received significant interest in recent years, here we mention only a few. \cite{pmlr-v49-bhojanapalli16, park2018finding} gave deterministic guarantees on  the local convergence to a global minimizer of factorized gradient descent for certain low-rank optimization problems, under the mild assumption that a low-rank global minimizer exists. However, these results cannot capture constraints such as those in our Problem \eqref{eq:optProbNonconv} which encode projection matrices. 
\cite{chen2015fast}, which considers nonconvex gradient methods for low-rank statistical estimation, also considers constraints  that cannot capture projection matrices as in Problem \eqref{eq:optProbNonconv}. An exception is a specific case they
consider of linear objective functions. Moreover, even for linear functions such as the specific sparse PCA objective they consider, their analysis requires several non-trivial conditions to hold (e.g. local descent, local
smoothness etc), which they only show to hold under Gaussian data. 
\subsection{Notation}
Throughout this work we let $\Vert{\cdot}\Vert$ denote the Euclidean norm for vectors in $\reals^n$ and the spectral norm (largest singular value) for matrices in $\reals^{m\times n}$ or $\mbS^n$. We let $\Vert{\cdot}\Vert_F$ denote the Frobenius (Euclidean) norm for matrices. For a matrix $\X\in\mbS^n$, we let $\lambda_i(\X)$ denote the $i$th largest eigenvalue of $\X$. We let $\langle{\cdot,\cdot}\rangle$ denote the standard inner-product for both spaces $\reals^n$ and $\mbS^n$.  

\section{Overview of Results}
\subsection{Main result}
Our main novel contribution is the proof of the following theorem regarding the local linear convergence of the gradient orthogonal iteration  \eqref{eq:QRgrad} to the  optimal solution of Problems \eqref{eq:optProbConv}, \eqref{eq:optProbNonconv}.
\begin{theorem} \label{qr_theorem}[Local linear convergence of gradient orthogonal iteration]
Suppose Assumption \ref{ass:gap} holds true for some optimal solution $\X^*$ to Problem \eqref{eq:optProbConv} with some parameter $\delta>0$. Let $G \geq \sup_{\X\in\mF_{n,k}}\Vert{\nabla{}f(\X)}\Vert$. Consider the sequence $\{\Q_t\}_{t\geq 1}$ generated by Dynamics \eqref{eq:QRgrad} with a fixed step-size $\eta_t = \eta = \frac{1}{5 \max\{\beta, G\}}$ for all $t\geq 1$, and when initialized with $\Q_1\in\mP_{n,k}$ such that
$\Vert{\Q_{1}\Q_{1}^{\top}-\X^*}\Vert_F \leq \min\{1,\sqrt{\frac{\delta}{2}}\} \frac{\eta\delta}{2(1+\eta\beta)}$.
Then, we have that
\begin{align*}
\forall t\geq 1:\quad f(\Q_{t}\Q_{t}^{\top}) - f(\X^*) \leq \big(f(\Q_{1}\Q_{1}^{\top})-f(\X^*)\big)\exp \Big(-\frac{\delta (t-1)}{40 \max\{\beta,G\}}\Big).
\end{align*}
\end{theorem}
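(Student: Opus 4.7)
The plan is to view Dynamics \eqref{eq:QRgrad} as a perturbed orthogonal (subspace) iteration applied to the iterate-dependent matrix $\M_t := \I - \eta\nabla{}f(\Q_t\Q_t^\top)$, since the update can equivalently be written as $(\Q_{t+1},\mathbf{R}_{t+1}) \gets \textsc{QR-factorize}(\M_t\Q_t)$. By Theorems \ref{thm:optSol} and \ref{thm:strictComp}, Assumption \ref{ass:gap} implies that $\X^* \in \mP_{n,k}$ is unique and its range equals the $k$-dimensional eigenspace of $\nabla{}f(\X^*)$ spanned by the $k$ smallest eigenvalues; equivalently, this is the top-$k$ eigenspace of the ``ideal'' matrix $\M^* := \I - \eta\nabla{}f(\X^*)$, and $\M^*$ has eigengap $\eta\delta$ between its $k$th and $(k+1)$th largest eigenvalues. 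The step-size choice $\eta = 1/(5\max\{\beta,G\})$ keeps $\eta\|\nabla{}f(\X)\| \le 1/5$ uniformly over $\mF_{n,k}$, so both $\M_t$ and $\M^*$ are positive definite with all eigenvalues in $[4/5,6/5]$ throughout the run.

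The core of the argument is a one-step contraction lemma in the Frobenius subspace distance $d_t := \|\Q_t\Q_t^\top - \X^*\|_F$. Decompose $\M_t = \M^* + \matE_t$; $\beta$-smoothness of $f$ gives $\|\matE_t\| \le \eta\beta\, d_t$. Letting $\V_1^*, \V_2^*$ be orthonormal bases of the top-$k$ and bottom-$(n-k)$ eigenspaces of $\M^*$, write $\Q_t = \V_1^*\A_t + \V_2^*\B_t$. A quantitative orthogonal-iteration argument (in the spirit of Davis--Kahan / Wedin) shows that the tangent quantity $\|\B_{t+1}\A_{t+1}^{-1}\|$ contracts from $\|\B_t\A_t^{-1}\|$ by the factor $\lambda_{k+1}(\M^*)/\lambda_k(\M^*) \le 1 - (5/6)\eta\delta$, up to an additive perturbation of order $\|\matE_t\|/(\eta\delta)$. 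Translating this angle bound into a Frobenius bound on projection matrices yields, whenever $d_t$ is below a threshold of order $\eta\delta/(1+\eta\beta)$, an inequality of the form
\[
d_{t+1} \;\le\; (1 - c\,\eta\delta)\, d_t
\]
for some absolute constant $c > 0$. The initialization hypothesis $d_1 \le \min\{1,\sqrt{\delta/2}\}\cdot \eta\delta/(2(1+\eta\beta))$ is calibrated precisely so that this threshold is met at $t=1$, and the contraction preserves it for all subsequent $t$, closing an induction.

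To convert the distance contraction into the claimed function-value rate, I would establish a two-sided sandwich $c_1\delta\,\|\X-\X^*\|_F^2 \le f(\X)-f(\X^*) \le c_2\max\{\beta,G\}\,\|\X-\X^*\|_F^2$ for $\X \in \mP_{n,k}$ in a neighborhood of $\X^*$. Both directions exploit the fact that along feasible directions in $\mP_{n,k}$ the first-order term $\langle\nabla{}f(\X^*),\X-\X^*\rangle$ is itself $O(\|\X-\X^*\|_F^2)$ (since $\X^*$ minimizes the linear functional $\langle\nabla{}f(\X^*),\cdot\rangle$ over $\mP_{n,k}$, equivalently via the Lagrangian decomposition $\nabla{}f(\X^*) = s^*\I + \Z_1^* - \Z_2^*$ combined with complementarity and $\trace(\X-\X^*)=0$). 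The upper bound then comes from $\beta$-smoothness, while the quadratic-growth lower bound harnesses strict complementarity: the spectral gap $\delta$ carried by $\Z_1^*$, which lives in the orthogonal complement of $\X^*$'s range, forces any $\X \in \mP_{n,k}\setminus\{\X^*\}$ to pay at least $\Omega(\delta)\,\|\X-\X^*\|_F^2$. Composing the per-step squared-distance contraction with this sandwich gives $f(\X_{t+1}) - f(\X^*) \le (1 - c_3\eta\delta)(f(\X_t) - f(\X^*))$, which telescopes to the stated exponential rate $\exp(-\delta(t-1)/(40\max\{\beta,G\}))$ after absorbing absolute constants.

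The main technical obstacle is the perturbed subspace-iteration lemma itself: $\matE_t$ is bounded by the very quantity $d_t$ whose contraction we want to establish, producing a coupled induction whose basin of attraction must be sized with exactly the constants appearing in the initialization hypothesis. A secondary subtlety is that the retraction to the Stiefel manifold is by QR rather than by polar decomposition, so the entire analysis should be carried out with the column span of $\M_t\Q_t$ (i.e., with projection matrices $\Q_t\Q_t^\top$) rather than any particular orthonormal representative, to ensure the specific basis chosen by QR does not affect the bound. Finally, extracting the quadratic-growth constant proportional to $\delta$ (rather than a weaker $\delta$-dependent quantity) requires the full spectral structure of $(\Z_1^*,\Z_2^*)$ provided by Theorem \ref{thm:strictComp}, not merely convexity and smoothness of $f$.
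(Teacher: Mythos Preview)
Your high-level picture---view the update as one step of orthogonal iteration on a perturbed matrix and couple a distance contraction with a quadratic-growth sandwich---is reasonable, but the specific perturbation argument you outline does not close. The crux is your choice of the ``ideal'' matrix $\M^* = \I - \eta\nabla f(\X^*)$. Its eigengap between the $k$th and $(k{+}1)$th eigenvalues is only $\eta\delta$, while your perturbation satisfies $\|\matE_t\|_F \le \eta\beta\, d_t$. Carrying out the one-step bound (either in your tangent form or directly on $\|\B_{t+1}\|_F$ after dividing through by $\sigma_{\min}(\R_{t+1})\approx\lambda_k(\M^*)$) yields, up to higher-order terms in $d_t$, a contraction factor of the form $\bigl(\lambda_{k+1}(\M^*)+C\eta\beta\bigr)/\lambda_k(\M^*)$, so contraction requires $C\eta\beta < \eta\delta$, i.e.\ $\beta < \delta/C$ for an absolute constant $C$. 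The theorem makes no such assumption; in typical instances $\beta \ge \delta$, and then the additive $\eta\beta d_t$ term swamps the $\eta\delta$ contraction, so the induction never starts. (Your stated additive error ``of order $\|\matE_t\|/(\eta\delta)$'' is in fact too pessimistic---the correct order is $\|\matE_t\|/\lambda_k(\M^*)\asymp\|\matE_t\|$---but even with the sharper estimate the obstruction stands.)

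The paper sidesteps this by running the orthogonal-iteration analysis not relative to $\M^*$ but relative to the \emph{iterate-dependent} matrix $\W_{t+1}:=\Y_t-\eta\nabla f(\Y_t)$, which satisfies $\W_{t+1}\Q_t=\M_t\Q_t$ and hence describes the same update. Because $\Y_t=\Q_t\Q_t^\top\in\mP_{n,k}$ has eigenvalues $1$ and $0$, Weyl gives $\lambda_{k+1}(\W_{t+1})/\lambda_k(\W_{t+1})\le \eta G/(1-\eta G)\le 1/4$: an $O(1)$ ratio, independent of $\delta$. One orthogonal-iteration step therefore pulls $\Y_{t+1}$ very close to the top-$k$ eigenspace of $\W_{t+1}$, which---once Lemma~\ref{lem:rad} certifies $\rank(\Pi_{\mF_{n,k}}[\W_{t+1}])=k$---is exactly the projected-gradient iterate $\X_{t+1}\in\mP_{n,k}$ (this is Lemma~\ref{lem:QRit}). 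The parameter $\delta$ enters only in the second layer, through the quadratic-growth Lemma~\ref{lem:quad}, which drives the PGD-style descent of $h_t=f(\Y_t)-f(\X^*)$ in Lemma~\ref{lem:errorDec}. In short, the paper decouples ``GOI $\approx$ PGD'' (uses the $O(1)$ gap in $\W_{t+1}$) from ``PGD converges'' (uses $\delta$). Your sandwich ingredients are fine and correspond to Lemmas~\ref{lem:quad} and~\ref{lem:aux}; what is missing is this two-layer structure, without which the one-step contraction cannot be established in the regime $\beta\gtrsim\delta$.
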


While, as stated above, this is not the first work to consider strict complementarity conditions for bridging convex and nonconvex methods for low-rank optimization, previous works such as \cite{garber2019linear, ding2020spectral, garber2021convergence, kaplan2021low} consider gradient methods that rely on (nearly) exact (low-rank) SVD computations on each iteration, whereas Theorem \ref{qr_theorem} considers the more efficient \textit{SVD-free} Dynamics \eqref{eq:QRgrad}, that requires only a single QR-factorization of an $n\times k$ matrix per iteration, which is much faster and simpler to implement. Accordingly, the proof is also considerably more challenging and requires new ideas.

\subsection{Additional results}

We also prove the following two theorems regarding the local linear convergence of the projected gradient Dynamics \eqref{eq:nonconvProjGrad} and the Frank-Wolfe Dynamics \eqref{eq:FrankWolfe}. These extend the results in \cite{garber2021convergence, garber2019linear} from optimization over the set of positive semidefinite matrices with unit trace to the Fantope.
\begin{theorem}\label{pgd_theorem}[Local linear convergence of nonconvex PGD]
Suppose Assumption \ref{ass:gap} holds true for some optimal solution $\X^*$ to Problem \eqref{eq:optProbConv} with some parameter $\delta>0$.
Consider the sequence $\{\X_{t}\}_{t\geq 1}$ generated by Dynamics \eqref{eq:nonconvProjGrad} with a fixed step-size $\eta_t = \eta = 1/\beta$ for all $t\geq 1$, and when initialized with $\X_{1}\in\mF_{n,k}$ such that
$\|\X_{1}-\X^{*}\|_{F} \leq \frac{\delta}{4\beta}$. Then, for all $t\geq 1$ it holds that 
\begin{enumerate}
\item
$\rank(\X_{t+1}) = k$, and thus, given $\X_t$ and $\nabla{}f(\X_t)$, $\X_{t+1}$ can be computed via a rank-$k$ SVD,
\item
$f(\X_t)-f(\X^*) \leq \left({f(\X_1) - f(\X^*)}\right)\cdot\exp(-\Theta(\delta/\beta)(t-1)))$. 
\end{enumerate}
\end{theorem}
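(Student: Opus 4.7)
The plan is to argue by induction that the nonconvex PGD iteration is actually equivalent to convex PGD over the Fantope $\mF_{n,k}$, and then invoke a quadratic growth inequality that holds under strict complementarity to obtain linear convergence. The induction hypothesis is $\|{\X_t - \X^*}\|_F \leq \delta/(4\beta)$.

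To start, I would exploit the KKT structure at $\X^*$. Writing $\nabla{}f(\X^*) = \Z_1^* - \Z_2^* + s^*\I$ in an eigenbasis aligned with the ranges of $\X^*$ and $\I - \X^*$, strict complementarity (equivalent to the eigengap by Theorem \ref{thm:strictComp}) tells us that $\Z_2^*$ is strictly positive definite on $\textrm{range}(\X^*)$ and $\Z_1^*$ is strictly positive definite on $\textrm{range}(\I - \X^*)$, with $\lambda_{\min}(\Z_1^*|_{\textrm{range}(\I-\X^*)}) + \lambda_{\min}(\Z_2^*|_{\textrm{range}(\X^*)}) \geq \delta$. A direct block computation of $\X^* - \eta\nabla{}f(\X^*)$ then shows its top-$k$ eigenvalues lie at least $1 + \eta\delta$ above its bottom-$(n-k)$ eigenvalues. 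Next, by $\beta$-smoothness and Weyl's inequality, the matrix $\Y_t := \X_t - \eta\nabla{}f(\X_t)$ is within spectral norm $(1+\eta\beta)\|{\X_t-\X^*}\|_F \le 2 \cdot \delta/(4\beta) = \delta/(2\beta) = \eta\delta/2$ of $\X^* - \eta\nabla{}f(\X^*)$, so its top-$k$/bottom-$(n-k)$ eigengap still exceeds $1$.

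Given this gap-of-at-least-one, I would establish part (1) by noting that the Euclidean projection of any symmetric $\Y$ onto $\mF_{n,k}$ is obtained by clipping its eigenvalues: $\mu_i = \min(\max(\lambda_i - s, 0), 1)$ with $s$ chosen so $\sum_i \mu_i = k$. A gap $\lambda_k(\Y_t) - \lambda_{k+1}(\Y_t) \geq 1$ allows picking $s \in [\lambda_{k+1}(\Y_t), \lambda_k(\Y_t) - 1]$, which forces $\mu_1 = \cdots = \mu_k = 1$ and $\mu_{k+1} = \cdots = \mu_n = 0$. Hence $\Pi_{\mF_{n,k}}[\Y_t]$ coincides with $\Pi_{\mP_{n,k}}[\Y_t]$, both equal to the rank-$k$ projection matrix onto the top-$k$ eigenspace of $\Y_t$, giving conclusion (1) and identifying the nonconvex PGD iterate with the convex PGD iterate over $\mF_{n,k}$.

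For conclusion (2), I would use convex PGD analysis on $\mF_{n,k}$. First, firm nonexpansiveness of $\Pi_{\mF_{n,k}}$ together with $\X^* = \Pi_{\mF_{n,k}}[\X^* - \eta\nabla{}f(\X^*)]$ (an optimality condition) and the descent lemma yields $\|{\X_{t+1} - \X^*}\|_F \leq \|{\X_t - \X^*}\|_F$, which closes the induction. Then I would derive a quadratic growth inequality: there is $c = \Omega(\delta)$ such that $f(\X) - f(\X^*) \geq c\,\|{\X - \X^*}\|_F^2$ for every $\X \in \mF_{n,k}$ in a neighborhood of $\X^*$. Combined with the usual $f(\X_t) - f(\X_{t+1}) \geq \frac{1}{2\eta}\|{\X_{t+1} - \X_t}\|_F^2$ estimate and the convex PGD ``three-point'' bound, quadratic growth upgrades the standard $O(1/t)$ rate to the claimed $\exp(-\Theta(\delta/\beta)(t-1))$ rate.

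The main obstacle is the quadratic growth step, since Fantope constraints differ from the unit-trace PSD setting of \cite{garber2019linear, garber2021convergence}: at $\X^*$ both the $\X \succeq 0$ and $\X \preceq \I$ constraints are active, corresponding to two dual matrices $\Z_1^*, \Z_2^*$. I would derive growth by decomposing $\X - \X^*$ into a ``tangent'' part (preserving rank and trace) controlled by the Hessian-style smoothness of $f$, and a ``normal'' part controlled by the linear term $\langle \nabla{}f(\X^*), \X - \X^*\rangle = \langle \Z_1^* - \Z_2^*, \X - \X^*\rangle$, which under strict complementarity is bounded below by $\min\{\lambda_{\min}(\Z_1^*|_{\textrm{range}(\I-\X^*)}), \lambda_{\min}(\Z_2^*|_{\textrm{range}(\X^*)})\} \cdot \|{\X - \X^*}\|_F^2$. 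The calibration $\delta/(4\beta)$ in the initialization is precisely what ensures that all perturbation arguments (eigengap, saturation of the clipping, quadratic growth) hold simultaneously throughout the run.
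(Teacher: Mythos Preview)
Your overall architecture matches the paper's: show the convex Fantope projection of $\Y_t = \X_t - \eta\nabla f(\X_t)$ has rank $k$ (hence coincides with the nonconvex projection), use the monotone decrease of $\|\X_t - \X^*\|_F$ for convex PGD to propagate this for all $t$, and invoke quadratic growth to upgrade to a linear rate. The rank argument via an eigengap $\lambda_k(\Y_t) - \lambda_{k+1}(\Y_t) \geq 1$ and the clipping description of $\Pi_{\mF_{n,k}}$ is exactly the paper's Lemma~\ref{lem:rad} plus Lemma~\ref{lem:proj}. Two points where you diverge are worth flagging.

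First, your route to the eigengap of $\X^* - \eta\nabla f(\X^*)$ through the KKT decomposition $\nabla f(\X^*) = \Z_1^* - \Z_2^* + s^*\I$ is more roundabout than necessary, and your claim that \emph{both} $\Z_1^*$ and $\Z_2^*$ are strictly positive definite on their respective ranges does not follow from Definition~\ref{strict_comp_def}, which is a disjunction. The paper instead uses Lemma~\ref{lem:opt} to identify $\X^*$ directly with the projection onto the top-$k$ eigenspace of $-\nabla f(\X^*)$, from which the spectrum of $\X^*-\eta\nabla f(\X^*)$ is read off in one line.

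Second, and more substantively, your quadratic-growth sketch has a gap. You propose to split $\X - \X^*$ into a ``tangent'' part ``controlled by the Hessian-style smoothness of $f$'' and a ``normal'' part controlled by the linear term. But smoothness gives an \emph{upper} bound on $f(\X) - f(\X^*)$, not the lower bound you need, so the tangent piece is unaccounted for; and your normal-part estimate is stated with the full $\|\X - \X^*\|_F^2$, which is only correct if there is no tangent component. The paper's Lemma~\ref{lem:quad} avoids any such decomposition: using only convexity $f(\X)-f(\X^*)\ge\langle \X-\X^*,\nabla f(\X^*)\rangle$ and the eigendecomposition of $\nabla f(\X^*)$, one shows directly that $\langle \X-\X^*,\nabla f(\X^*)\rangle \ge \tfrac{\delta}{2}\|\X-\X^*\|_F^2$ for \emph{every} $\X\in\mF_{n,k}$, globally and not just in a neighborhood. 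The point is that $\X^*$ is a vertex of $\mF_{n,k}$ and the eigengap $\delta$ makes the linear functional strictly increasing in all feasible directions; there is no flat tangent direction to worry about. Once you have this global quadratic growth, the linear rate follows from standard PGD theory (the paper simply cites \cite{necoara2019linear}).
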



\begin{theorem}\label{fw_theorem}[Local linear convergence of Frank-Wolfe]
Suppose Assumption \ref{ass:gap} holds true for some optimal solution $\X^*$ to Problem \eqref{eq:optProbConv} with some parameter $\delta>0$.
Consider the sequences $\{(\X_{t},\V_t)\}_{t\geq 1}$ generated by Dynamics \eqref{eq:FrankWolfe} when $\eta_t$ is chosen via line-search. Then, there exists $T_0=O\left({k(\beta/\delta)^3}\right)$ such that, 
\vspace{-8pt}
\begin{align*}
\forall t\geq T_0:~ f(\X_{t+1}) - f(\X^*) \leq \big(f(\X_{t}) - f(\X^*)\big)\Big({1-\min\{ \dfrac{\delta}{12\beta}, \dfrac{1}{2} \}}\Big).
\end{align*}
Moreover, for all $t\geq 1$, the rank-$k$ matrix $\V_t$ satisfies 
$\|\V_t-\X^*\|_F^2=O\Big({\dfrac{\beta^2}{\delta^3}\big(f(\X_t) - f(\X^*)\big)}\Big)$.
\end{theorem}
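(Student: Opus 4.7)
The plan is to use a two-phase analysis: a warm-up phase invoking the classical $O(1/t)$ Frank-Wolfe rate to bring the iterate into a neighborhood of $\X^*$, followed by a linear-convergence phase driven by a quadratic growth property of $f$ induced by strict complementarity. I would develop three ingredients and then combine them.

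First I would prove the quadratic growth inequality: for every $\X\in\mF_{n,k}$, $f(\X)-f(\X^*)\geq(\delta/4)\,\|\X-\X^*\|_F^2$. By Theorem~\ref{thm:optSol} one has $\X^*=\U^*(\U^*)^\top$, where $\U^*$ collects the bottom $k$ eigenvectors of $\nabla f(\X^*)$; let $\W^*$ complete $\U^*$ to an orthonormal eigenbasis. Expressing $\X$ as a $2\times 2$ block matrix in $[\U^*\,|\,\W^*]$ with blocks $\X^{UU},\X^{UW},\X^{WW}$, the trace constraint $\trace(\X)=k$ together with the eigengap $\delta$ yields $\langle\nabla f(\X^*),\X-\X^*\rangle\geq \delta\,\trace(\X^{WW})$. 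The constraints $0\preceq\X\preceq\I$, combined with the Schur-type bound $\X^{UW}(\X^{UW})^\top\preceq \X^{UU}(\I-\X^{UU})$ that follows from $\X-\X^2\succeq 0$, give $\|\X-\X^*\|_F^2\leq 4\,\trace(\X^{WW})$. Convexity of $f$ closes the chain.

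Next I would establish the oracle-closeness bound $\|\V_t-\X^*\|_F^2=O(\beta^2 h_t/\delta^3)$ with $h_t:=f(\X_t)-f(\X^*)$. By $\beta$-smoothness of $\nabla f$ and the quadratic growth just shown, $\|\nabla f(\X_t)-\nabla f(\X^*)\|_F^2\leq \beta^2\|\X_t-\X^*\|_F^2\leq 4\beta^2 h_t/\delta$. When $\|\nabla f(\X_t)-\nabla f(\X^*)\|\leq\delta/4$, Weyl's inequality preserves the eigengap at $\X_t$, and the Davis-Kahan $\sin\Theta$ theorem (Frobenius form) gives $\|\V_t-\X^*\|_F^2\leq C\|\nabla f(\X_t)-\nabla f(\X^*)\|_F^2/\delta^2=O(\beta^2 h_t/\delta^3)$. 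In the complementary far regime the trivial bound $\|\V_t-\X^*\|_F^2\leq 2k$ combined with the lower bound $h_t\geq\Omega(\delta^2/\beta)$ from smoothness-convexity is absorbed into the same target. Plugged into the standard FW guarantee $h_t=O(\beta k/t)$ (using that $\mF_{n,k}$ has Frobenius diameter $O(\sqrt{k})$), this forces $\|\V_t-\X^*\|_F^2=O(\beta^3 k/(\delta^3 t))$, which drops below any prescribed threshold once $T_0=\Theta(k(\beta/\delta)^3)$.

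Finally, for $t\geq T_0$ I would apply the smoothness-based FW descent inequality with line search, $h_{t+1}\leq\min_{\eta\in[0,1]}\{h_t-\eta g_t+(\beta\eta^2/2)\|\V_t-\X_t\|_F^2\}$, where the FW gap $g_t:=\langle\nabla f(\X_t),\X_t-\V_t\rangle\geq h_t$ by convexity and the optimality of $\V_t$. Combining the triangle bound $\|\V_t-\X_t\|_F^2\leq 2\|\V_t-\X^*\|_F^2+2\|\X_t-\X^*\|_F^2$ with quadratic growth and the oracle-closeness estimate gives $\|\V_t-\X_t\|_F^2=O(h_t/\delta)$ once in the close regime. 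The two branches of the minimum then yield the advertised contraction: either $\eta^\star\geq 1$ so $\eta=1$ gives $h_{t+1}\leq h_t-g_t/2\leq h_t/2$, or $\eta^\star<1$ gives $h_{t+1}\leq h_t-g_t^2/(2\beta\|\V_t-\X_t\|_F^2)\leq h_t(1-\delta/(12\beta))$. The main obstacle is squeezing the $\|\V_t-\X_t\|_F^2$ bound to scale $O(h_t/\delta)$, matching the quadratic-growth term, rather than the worse $O(\beta h_t/\delta^2)$ that falls out of a direct appeal to Davis-Kahan; this is what upgrades an easy $(\delta/\beta)^2$ contraction factor into the advertised $\delta/\beta$.
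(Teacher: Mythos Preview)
Your plan for the second claim (the bound $\|\V_t-\X^*\|_F^2 = O(\beta^2 h_t/\delta^3)$) is fine and matches the paper: Davis--Kahan using the gap at $\nabla f(\X^*)$, then $\beta$-smoothness, then quadratic growth. You do not actually need the ``far regime'' case split; the variant of Davis--Kahan that only requires the eigengap on the reference matrix $\nabla f(\X^*)$ already gives the bound for every $t\geq 1$.

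The linear-convergence part, however, has a genuine gap. You write that the triangle bound together with quadratic growth and your oracle-closeness estimate gives $\|\V_t-\X_t\|_F^2=O(h_t/\delta)$, but this is not what your own estimates produce: $\|\X_t-\X^*\|_F^2 \leq C h_t/\delta$ is fine, yet your Davis--Kahan bound is $\|\V_t-\X^*\|_F^2 = O(\beta^2 h_t/\delta^3)$, so the triangle inequality only yields $\|\V_t-\X_t\|_F^2 = O(\beta^2 h_t/\delta^3)$ in the relevant regime $\beta\gtrsim\delta$. Plugging that into the standard Frank--Wolfe descent lemma gives a contraction $1-\Theta((\delta/\beta)^3)$, not $1-\delta/(12\beta)$. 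You flag this (``the main obstacle is squeezing\ldots'') but you do not supply the idea that closes it.

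The missing ingredient is a \emph{local} version of the quadratic-growth inequality, applied at $\X_t$ rather than at $\X^*$: once $t\geq T_0$ so that the eigengap of $\nabla f(\X_t)$ is at least $\delta/3$ (which you do establish via Weyl), the very same argument you used for quadratic growth, but with $(\X_t,\V_t,\nabla f(\X_t))$ in place of $(\X^*, \X, \nabla f(\X^*))$, gives
\[
g_t \;=\; \langle \X_t-\V_t,\nabla f(\X_t)\rangle \;\geq\; \tfrac{\delta}{6}\,\|\X_t-\V_t\|_F^2.
\]
The point is not to bound $\|\V_t-\X_t\|_F^2$ by $h_t$ at all, but by the Frank--Wolfe gap $g_t$. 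Feeding this directly into the smoothness descent inequality,
\[
h_{t+1} \;\leq\; h_t - \eta g_t + \tfrac{\beta\eta^2}{2}\|\V_t-\X_t\|_F^2
\;\leq\; h_t - \eta g_t\Big(1 - \tfrac{3\beta\eta}{\delta}\Big),
\]
and choosing $\eta=\min\{1,\delta/(6\beta)\}$ together with $g_t\geq h_t$ gives exactly the stated contraction $1-\min\{\delta/(12\beta),1/2\}$. This is how the paper proceeds (its Lemma~\ref{lem:miniquad}); your detour through $\X^*$ via Davis--Kahan loses the needed factor of $\beta/\delta$.
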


\paragraph*{What if Assumption \ref{ass:gap} fails?}
In case Assumption \ref{ass:gap} does not hold or holds with negligible parameter $\delta$,  not all is lost, since by considering weaker versions of Assumption \ref{ass:gap}, which consider eigen-gaps between higher eigenvalues, we can still guarantee that $\X^*$ (an optimal solution to Problem \eqref{eq:optProbConv}) has low rank, and that at least the projected gradient method (when applied to Problem \eqref{eq:optProbConv}), locally, will require only a low-rank SVD to compute the projection onto the Fantope, while guaranteeing the standard convergence rate of $O(1/t)$ (not linear rate as when Assumption \ref{ass:gap} holds). 
\begin{theorem}\label{thm:pgd_general}
Let $\X^*\in\mF_{n,k}$ be some optimal solution to Problem \eqref{eq:optProbConv} and let $\mu_1 \geq \mu_2 \geq...\mu_n$ denote the eigenvalues of $-\nabla{}f(\X^*)$. Let $r$ be the smallest integer such that $r\geq k$ and $\mu_r - \mu_{r+1} > 0$. Then, it holds that $\rank(\X^*) \leq r$. Moreover, consider the projected gradient dynamics w.r.t. Problem \eqref{eq:optProbConv} given by,
$\X_{t+1} \gets \Pi_{\mF_{n,k}}[\X_t-\beta^{-1}\nabla{}f(\X_t)]$.
For any $r'\in\{r,\dots,n-1\}$, if $\Vert{\X_1 - \X^*}\Vert_F \leq \frac{\mu_k - \mu_{r'+1}}{4\beta}$, then it holds that,
\begin{enumerate}
\item
 $\forall{}t\geq 1$, $\rank(\X_{t+1}) \leq r'$, i.e., given $\X_t$ and $\nabla{}f(\X_t)$, $\X_{t+1}$ can be computed via a rank-$r'$ SVD.
\item
$\{\X_t\}_{t\geq 1}$ converges with the standard PGD rate:  $f(\X_t) - f(\X^*) = O(\beta\Vert{\X_1-\X^*}\Vert_F^2/t)$.
\end{enumerate}
\end{theorem}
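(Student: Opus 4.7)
The plan is to prove the three assertions in order.

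\emph{Rank of $\X^*$.} The KKT conditions written out in the excerpt yield $\nabla f(\X^*) = \Z_1^* - \Z_2^* + s^*\I$ with the complementarity relations $\Z_1^*\X^* = \Z_2^*(\I - \X^*) = \mathbf{0}$. Since $\Z_1^*$ is symmetric with range in the nullspace of $\X^*$ and $\Z_2^*$ is symmetric with range in the nullspace of $\I-\X^*$, and these are orthogonal eigenspaces of $\X^*$, a short computation shows that $\X^*$ and $\nabla f(\X^*)$ commute, so they share a common eigenbasis. In this basis the eigenvectors of $\X^*$ with nonzero eigenvalue must lie in the span of the eigenvectors of $-\nabla f(\X^*)$ with eigenvalues at least $-s^*$. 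The trace constraint $\trace(\X^*) = k$ places $-s^* \in [\mu_{k+1}, \mu_k]$, so by the definition of $r$ and a short case analysis on the trace accounting, $\rank(\X^*) \le r$.

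\emph{Rank of $\X_{t+1}$.} Since $\X^*$ is a fixed point of the PGD map with step $1/\beta$, standard co-coercivity of $\nabla f$ together with non-expansiveness of the Euclidean projection onto $\mF_{n,k}$ gives $\|\X_{t+1} - \X^*\|_F \le \|\X_t - \X^*\|_F$, so all iterates stay within the ball of radius $(\mu_k - \mu_{r'+1})/(4\beta)$ about $\X^*$. Write $\M_t := \X_t - \beta^{-1}\nabla f(\X_t)$ and $\M^* := \X^* - \beta^{-1}\nabla f(\X^*)$; $\beta$-smoothness gives $\|\M_t - \M^*\| \le \|\M_t - \M^*\|_F \le 2\|\X_t - \X^*\|_F$. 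I invoke the standard Fantope-projection characterization: $\Pi_{\mF_{n,k}}[\M]$ has eigenvalues $\min\{\max\{\lambda_i(\M) + \tau, 0\}, 1\}$ in the eigenbasis of $\M$, where the dual shift $\tau$ is chosen so that the clipped trace equals $k$. Using the common eigenbasis from Part~1 one checks directly that $\tau^* := -\beta^{-1}\mu_k$ is a valid shift for $\Pi_{\mF_{n,k}}[\M^*] = \X^*$, and that $\lambda_{r'+1}(\M^*) + \tau^* = -\beta^{-1}(\mu_k - \mu_{r'+1})$, because position $r'+1$ lies strictly in the ``bottom'' block of the spectrum of $\M^*$ whenever $r' \ge r$. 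The key sub-lemma is the one-sided Lipschitz bound $\tau_t \le \tau^* + \|\M_t - \M^*\|$, which I prove by a monotonicity argument: by Weyl's inequality, the total-mass function $G_t(\tau) := \sum_i \min\{\max\{\lambda_i(\M_t) + \tau, 0\}, 1\}$ evaluated at $\tau^* + \|\M_t - \M^*\|$ pointwise dominates $G^*(\tau^*) = k$, and since $G_t$ is non-decreasing with $G_t(\tau_t) = k$ we obtain $\tau_t \le \tau^* + \|\M_t - \M^*\|$. Combining Weyl's $\lambda_{r'+1}(\M_t) \le \lambda_{r'+1}(\M^*) + \|\M_t - \M^*\|$, this shift bound, and the assumed initial radius gives $\lambda_{r'+1}(\M_t) + \tau_t \le 0$, so the $(r'+1)$-th eigenvalue of $\X_{t+1}$ is clipped to zero and $\rank(\X_{t+1}) \le r'$.

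\emph{Convergence rate.} The non-expansive bound $\|\X_t - \X^*\|_F \le \|\X_1 - \X^*\|_F$ keeps the iterates in a bounded neighborhood of $\X^*$, and the textbook PGD analysis for convex $\beta$-smooth minimization over a convex set with step $1/\beta$ directly yields $f(\X_t) - f(\X^*) = O(\beta\|\X_1 - \X^*\|_F^2 / t)$.

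\emph{Main obstacle.} The hardest step is the one-sided Lipschitz control of the Fantope dual shift $\tau_t$ by an operator-norm perturbation of $\M_t$: $\tau_t$ is defined only implicitly through the trace constraint, and naive elementwise or Hoffman--Wielandt bounds on its perturbation would lose a factor of $n$ or $\sqrt{n}$ and destroy the quantitative conclusion. The trick is that a uniform eigenvalue shift of $\M$ translates into an equal-and-opposite shift of $\tau$, and combined with the monotonicity of the $[0,1]$-clipping this is what yields the tight $\|\cdot\|$-norm bound.
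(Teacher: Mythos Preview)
Your proposal is correct and complete, but it takes a genuinely different route from the paper in the two nontrivial parts.

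For the bound $\rank(\X^*)\le r$, the paper (Lemma~\ref{lem:opt}) argues variationally: assuming $\X^*$ puts mass on the eigenspace of $-\nabla f(\X^*)$ past index $r$, one exhibits $\P^*=\sum_{i\le k}\u_i\u_i^\top$ as a strict descent direction, contradicting first-order optimality. You instead invoke the KKT decomposition $\nabla f(\X^*)=\Z_1^*-\Z_2^*+s^*\I$ to show $\X^*$ and $\nabla f(\X^*)$ commute, read off the three-block spectral structure, and locate $-s^*\in[\mu_{k+1},\mu_k]$ by trace accounting. Both are valid; yours is more algebraic and reuses the KKT machinery already set up in the paper, while the paper's argument is self-contained and does not need to name the dual variables.

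For the rank bound on the iterates, the paper (Lemma~\ref{lem:genrad}) works through the sum criterion $\sum_{i\le r'}\min(\gamma_i-\gamma_{r'+1},1)\ge k$ of Lemma~\ref{lem:proj}, bounding the eigenvalues $\sigma_i$ of $\M^*$ via the Courant--Fischer min--max principle and then transferring to $\M_t$ by Weyl. You instead identify the explicit dual shift $\tau^*=-\beta^{-1}\mu_k$ (which works precisely because of the commutativity you established in Part~1), prove the one-sided Lipschitz bound $\tau_t\le\tau^*+\|\M_t-\M^*\|$ by the monotonicity-of-clipping trick, and combine with Weyl on $\lambda_{r'+1}$. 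Your argument is shorter and conceptually cleaner---it makes transparent that the mechanism is a perturbation of the scalar dual variable---whereas the paper's approach stays entirely on the primal side and avoids having to pin down $\tau^*$ exactly. Quantitatively both land on the same radius $\frac{\mu_k-\mu_{r'+1}}{4\beta}$ after the factor-of-two loss from $\beta$-smoothness.

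Part~3 and the nonexpansiveness step are identical to the paper's.
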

\begin{remark}
Note that via the parameter $r'$, Theorem \ref{thm:pgd_general} offers a flexible tradeoff between the  radius of the ball in which PGD needs to be initialized in (increasing $r'$ increases the radius), and the rank of the iterates which in turn, implies an upper-bound on the rank of SVD computations required for the projection, which controls the runtime of each iteration. 
\end{remark}
\begin{remark}
Theorem \ref{thm:pgd_general} may be in particular interesting when $f(\cdot)$ is \textit{subspace-monotone} in the sense that for any two subspaces $\mS_1 \subseteq \mS_2\subseteq\reals^n$ and their corresponding projection matrices $\P_1,\P_2\in\mbS^n$, it holds that $f(\P_2)\leq f(\P_1)$. In this case, given an optimal solution $\X^*$ to the convex Problem \eqref{eq:optProbConv} with eigen-decomposition $\X^* = \sum_{i=1}^r\lambda_i\u_i\u_i^{\top}$, when $k < r << n$, using a projection matrix $\P^* = \sum_{i=1}^r\u_i\u_i^{\top}$  which satisfies $f(\P^*) \leq \min_{\X\in\mP_{n,k}}f(\X)$ may be of interest. For instance, it is not hard to show that $f(\cdot)$ of the form $f(\X) = \sum_{i=1}^mg_i(\Vert{\q_i-\X\q_i}\Vert)$, where $g_i(\cdot)$ is monotone non-decreasing and $\{\q_i\}_{i=1}^m\subset\reals^n$, is subspace-monotone.
\end{remark}

The complete proofs of Theorems \ref{qr_theorem}, \ref{pgd_theorem}, \ref{fw_theorem}, \ref{thm:pgd_general}, as well as additional results, are given in the appendix. Below we give the main ideas in the proof of Theorem  \ref{qr_theorem}.

\section{Proof Sketch of Theorem \ref{qr_theorem}}
\subsection{Preliminaries}\label{sec:analPrelim}

\begin{lemma}[Euclidean projection onto the Fantope]\label{lem:proj}
Let $\X\in \mbS^{n} $ and consider its eigen decomposition $\X=\sum_{i=1}^{n}\gamma_i\u_i\u_i^{\top}$.
The Euclidean projection $\Pi_{\mF_{n,k}}[\X]$ is given by:
$\Pi_{\mF_{n,k}}[\X]=\sum_{i=1}^{n}\gamma^{+}_i(\theta)\u_i\u_i^{\top}$, 
where $\gamma^{+}_i(\theta)=\min(\max(\gamma_i-\theta,0),1)$, and $\theta$ satisfies the equation $\sum_{i=1}^n\gamma_i^{+}(\theta)=k$.
\smallskip Moreover, $\forall{}r\in\{k,...,n-1\}$ it holds that $rank(\Pi_{\mF_{n,k}}(\X))\leq r$ if and only if $\sum_{i=1}^r \min(\gamma_i-\gamma_{r+1},1)\geq k$.
\end{lemma}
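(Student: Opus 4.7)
The plan is to reduce the matrix projection problem to a scalar one by exploiting the orthogonal invariance of the Frobenius norm and the spectral nature of the constraints, then derive the closed form via Lagrangian duality, and finally read off the rank characterization from the explicit formula.

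First, let $\Y^\star = \Pi_{\mF_{n,k}}[\X]$ and expand
\[
\|\Y-\X\|_F^2 = \|\X\|_F^2 + \|\Y\|_F^2 - 2\langle \X,\Y\rangle.
\]
Writing the eigenvalues of $\Y$ in decreasing order as $\lambda_1,\dots,\lambda_n$, the constraints defining $\mF_{n,k}$ depend only on the spectrum: $0\le \lambda_i\le 1$ and $\sum_i\lambda_i=k$. By von Neumann's trace inequality, for fixed spectrum the inner product $\langle \X,\Y\rangle$ is maximized precisely when $\Y$ shares the eigenvectors of $\X$ with eigenvalues of $\Y$ paired with eigenvalues of $\X$ in the same (decreasing) order. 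Hence the projection must be of the form $\Y^\star=\sum_{i=1}^n \lambda_i^\star \u_i\u_i^\top$, reducing the problem to
\[
\min_{\lambda}\ \tfrac12\sum_{i=1}^n (\lambda_i-\gamma_i)^2\quad\text{s.t.}\quad 0\le \lambda_i\le 1,\ \sum_i\lambda_i=k.
\]
(The monotonicity constraint $\lambda_1\ge\cdots\ge\lambda_n$ will be implied by the closed form below, so it need not be imposed explicitly.)

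Second, I would dualize the equality constraint with multiplier $\theta\in\reals$ and observe that the Lagrangian $\tfrac12\sum_i(\lambda_i-\gamma_i)^2+\theta(\sum_i\lambda_i-k)$ separates into $n$ one-dimensional, box-constrained quadratics. Each has minimizer $\lambda_i^\star = \min(\max(\gamma_i-\theta,0),1)=:\gamma_i^+(\theta)$. Strong duality holds since Slater's condition is satisfied (e.g., $\frac{k}{n}\I\in\mathrm{relint}(\mF_{n,k})$), and since the map $\theta\mapsto \sum_i \gamma_i^+(\theta)$ is continuous, non-increasing, and takes values in $[0,n]$, there exists $\theta$ with $\sum_i\gamma_i^+(\theta)=k$. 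Moreover, since $x\mapsto \min(\max(x-\theta,0),1)$ is non-decreasing in $x$, the resulting $\lambda_i^\star$ are automatically ordered consistently with $\gamma_i$, justifying the pairing chosen above.

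Third, for the rank characterization, note $\rank(\Pi_{\mF_{n,k}}[\X])=\#\{i:\gamma_i^+(\theta)>0\}=\#\{i:\gamma_i>\theta\}$. Thus $\rank(\Pi_{\mF_{n,k}}[\X])\le r$ is equivalent to $\theta\ge \gamma_{r+1}$. Under $\theta\ge \gamma_{r+1}$ the trace constraint collapses to $\sum_{i=1}^r \min(\gamma_i-\theta,1)=k$, and since this expression is non-increasing in $\theta$, such a $\theta$ exists in $[\gamma_{r+1},\infty)$ if and only if the value at $\theta=\gamma_{r+1}$ is already at least $k$, i.e., $\sum_{i=1}^r\min(\gamma_i-\gamma_{r+1},1)\ge k$. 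This gives the stated equivalence.

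The only mildly delicate point is confirming that the KKT argument yields a \emph{global} minimizer (handled by convexity of the reduced problem and strong duality) and that the pairing step via von Neumann's inequality is tight precisely when eigenvectors coincide and eigenvalues are co-ordered, which is standard. Everything else is bookkeeping around the clipping operation $\gamma_i^+(\theta)$.
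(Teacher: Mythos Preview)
Your argument is correct and, for the rank characterization, matches the paper's: both reduce to showing that a valid multiplier satisfies $\theta \geq \gamma_{r+1}$ via the monotonicity of $\theta \mapsto \sum_i \gamma_i^+(\theta)$ (the paper phrases this as a contradiction, you argue directly). For the projection formula itself the paper simply cites it as a known fact, whereas you supply the standard derivation via von Neumann's inequality and Lagrangian duality; this is extra work but entirely sound.

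One minor slip worth tightening: under $\theta \geq \gamma_{r+1}$ the trace constraint is $\sum_{i=1}^r \min(\max(\gamma_i - \theta, 0), 1) = k$, not $\sum_{i=1}^r \min(\gamma_i - \theta, 1)$, since the latter can be negative when $\theta > \gamma_i$ for some $i \leq r$. This does not affect your conclusion, because at the boundary value $\theta = \gamma_{r+1}$ (which is all you actually evaluate) the two expressions coincide, and the monotonicity you invoke holds for the correct expression just as well.
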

\begin{remark}\label{remark:rank}
Lemma \ref{lem:proj} implies that if $\rank(\X) \leq r$, then only the top $r$ components in the SVD of $\X$ are needed to compute $\Pi_{\mF_{n,k}}[\X]$, i.e., a rank-$r$ SVD of $\X$. Moreover, given the rank-$(r+1)$ SVD, we can check the condition $\sum_{i=1}^r \min(\gamma_i-\gamma_{r+1},1)\geq k$, to verify whether the projection has rank $\leq r$.
\end{remark}




The following lemma lower bounds, under Assumption \ref{ass:gap}, the radius of the ball around the unique optimal solution $\X^{*}$ inside-which, the PGD mapping w.r.t. the Fantope $\mF_{n,k}$ with a fixed step-size,  is guaranteed to produce rank-$k$ matrices, i.e., matrices in $\mP_{n,k}$, which means that it coincides precisely with the PGD mapping w.r.t. the nonconvex set $\mP_{n,k}$, given by the Dynamics  \eqref{eq:nonconvProjGrad}. 
\begin{lemma}\label{lem:rad}
Let $\X^{*}\in \mF_{n,k}$ be an optimal solution to Problem \eqref{eq:optProbConv} which satisfies Assumption \ref{ass:gap} with some parameter $\delta >0$, and let $\eta >0$. For any $\X\in \mF_{n,k}$ which satisfies 
$\|\X-\X^*\|_F\leq\dfrac{\eta\delta}{2(1+\eta\beta)}$,
it holds that $\rank(\Pi_{\mF_{n,k}}[\X-\eta\nabla f(\X)])=k$.
\end{lemma}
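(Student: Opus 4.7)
The plan is to apply the rank criterion in Lemma \ref{lem:proj} (with $r=k$) to $\A := \X - \eta\nabla f(\X)$. Since any element of $\mF_{n,k}$ has trace $k$ with eigenvalues in $[0,1]$, producing a rank-$k$ projection is equivalent to the eigengap condition $\lambda_{k}(\A) - \lambda_{k+1}(\A) \geq 1$. I would establish this by first proving the stronger gap $\lambda_{k}(\A^*) - \lambda_{k+1}(\A^*) \geq 1 + \eta\delta$ for the ``anchor'' matrix $\A^* := \X^* - \eta\nabla f(\X^*)$, and then transferring it to $\A$ by a Weyl-type perturbation estimate that loses at most $\eta\delta$.

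For the anchor gap, I would use that $\X^*$ is optimal for the convex Problem \eqref{eq:optProbConv}, so by first-order optimality $\X^* = \Pi_{\mF_{n,k}}[\A^*]$, and by Theorem \ref{thm:optSol}, $\rank(\X^*)=k$. Lemma \ref{lem:proj} exhibits the Fantope projection as a map that shares an eigenbasis with its argument, so $\X^*$ and $\A^*$ are simultaneously diagonalizable, and hence so is $\nabla f(\X^*) = \eta^{-1}(\X^* - \A^*)$. In a joint eigenbasis, the top-$k$ eigenvectors of $\A^*$ span $\textrm{range}(\X^*)$ and coincide with the eigenvectors corresponding to the $k$ smallest eigenvalues of $\nabla f(\X^*)$. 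Reading off the diagonal entries yields $\lambda_k(\A^*) = 1 - \eta\lambda_{n-k+1}(\nabla f(\X^*))$ and $\lambda_{k+1}(\A^*) = -\eta\lambda_{n-k}(\nabla f(\X^*))$, whence
\[
\lambda_k(\A^*) - \lambda_{k+1}(\A^*) = 1 + \eta\bigl(\lambda_{n-k}(\nabla f(\X^*)) - \lambda_{n-k+1}(\nabla f(\X^*))\bigr) \geq 1 + \eta\delta
\]
by Assumption \ref{ass:gap}.

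For the perturbation step, Weyl's inequality gives $|\lambda_i(\A) - \lambda_i(\A^*)| \leq \|\A - \A^*\|$. The triangle inequality, $\beta$-smoothness of $f$, and the fact that the spectral norm is dominated by the Frobenius norm together imply $\|\A - \A^*\| \leq (1+\eta\beta)\|\X - \X^*\|_F$. Combining,
\[
\lambda_k(\A) - \lambda_{k+1}(\A) \geq 1 + \eta\delta - 2(1+\eta\beta)\|\X - \X^*\|_F \geq 1,
\]
where the last inequality uses the hypothesis $\|\X - \X^*\|_F \leq \eta\delta/[2(1+\eta\beta)]$. Invoking Lemma \ref{lem:proj} with $r=k$ then gives $\rank(\Pi_{\mF_{n,k}}[\A]) \leq k$, and since $\trace(\Pi_{\mF_{n,k}}[\A]) = k$ with all eigenvalues in $[0,1]$, the rank equals $k$.

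The main obstacle is the anchor step: carefully justifying the simultaneous diagonalization of $\X^*$, $\A^*$ and $\nabla f(\X^*)$, and correctly identifying the top-$k$ eigenvectors of $\A^*$ with the bottom-$k$ eigenvectors of $\nabla f(\X^*)$. Degeneracies in the spectrum of $\A^*$ away from the critical gap merely require choosing a compatible eigenbasis, while Assumption \ref{ass:gap} rules out any degeneracy at $(\lambda_k,\lambda_{k+1})$ itself. Once the anchor gap $1+\eta\delta$ is in hand, the Weyl-based perturbation is routine.
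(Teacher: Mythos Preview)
Your proposal is correct and follows essentially the same route as the paper: establish the anchor gap $\lambda_k(\A^*)-\lambda_{k+1}(\A^*)\geq 1+\eta\delta$, transfer it via Weyl and the bound $\|\A-\A^*\|\leq(1+\eta\beta)\|\X-\X^*\|_F$, and invoke the rank criterion of Lemma~\ref{lem:proj}. The only minor difference is in the anchor step: rather than arguing simultaneous diagonalization from $\X^*=\Pi_{\mF_{n,k}}[\A^*]$ and then identifying the top-$k$ eigenvectors of $\A^*$ with the bottom-$k$ of $\nabla f(\X^*)$ (which, as you note, is the delicate part), the paper invokes Lemma~\ref{lem:opt} directly to get $\X^*=\sum_{i=1}^k\u_i\u_i^\top$ in the eigenbasis of $-\nabla f(\X^*)$, from which the eigenvalues of $\A^*$ are read off immediately --- this sidesteps your ``main obstacle'' in one line.
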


The following lemma establishes that under Assumption \ref{ass:gap}, Problem \eqref{eq:optProbConv}  has a quadratic growth property. This property is known to facilitate linear convergence rates of gradient methods \cite{necoara2019linear, karimi2016linear}.

\begin{lemma}[Quadratic Growth]\label{lem:quad}
Let $\X^*\in \mF_{n,k}$ be an optimal solution to Problem \eqref{eq:optProbConv} for which Assumption \ref{ass:gap} holds with some  $\delta >0$. Then, 
$\forall \X\in \mF_{n,k}:$ $\| \X-\X^{*}\| _{F}^{2} \leq \dfrac{2}{\delta} (f(\X)-f(\X^{*})) $.
\end{lemma}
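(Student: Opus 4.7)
}
The plan is to combine convexity of $f$ with a spectral decomposition in the eigenbasis of $\nabla f(\X^*)$, using the eigen-gap from Assumption \ref{ass:gap} to turn the first-order lower bound into one that is quadratic in $\|\X-\X^*\|_F$. Since $\X\in\mF_{n,k}$ and $\X^*\in\mF_{n,k}$ (with $\rank(\X^*)=k$ by Theorem \ref{thm:optSol}), convexity of $f$ gives
\begin{equation*}
f(\X)-f(\X^*)\ \geq\ \langle \nabla f(\X^*),\X-\X^*\rangle.
\end{equation*}
So it suffices to prove the clean inequality $\langle \nabla f(\X^*),\X-\X^*\rangle \geq \tfrac{\delta}{2}\|\X-\X^*\|_F^2$.

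Write the eigendecomposition $\nabla f(\X^*)=\sum_{i=1}^n\lambda_i\u_i\u_i^\top$ with $\lambda_1\geq\cdots\geq\lambda_n$. By first-order optimality for the convex Problem \eqref{eq:optProbConv} and Ky Fan's maximum principle, $\X^*$ is the projection onto the span of the $k$ smallest eigenvectors of $\nabla f(\X^*)$, so $\X^*=\sum_{i=n-k+1}^n\u_i\u_i^\top$ (uniqueness from Theorem \ref{thm:optSol} guarantees this is unambiguous). Let $p_i:=\u_i^\top \X\u_i$; then $p_i\in[0,1]$ and $\sum_i p_i=\trace(\X)=k$ because $\X\in\mF_{n,k}$. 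Setting $a:=\sum_{i=1}^{n-k}p_i$, we have $\sum_{i=n-k+1}^n(p_i-1)=-a$, and so
\begin{equation*}
\langle \nabla f(\X^*),\X-\X^*\rangle
\;=\;\sum_{i=1}^{n-k}\lambda_i p_i+\sum_{i=n-k+1}^n\lambda_i(p_i-1)
\;\geq\;\lambda_{n-k}\,a-\lambda_{n-k+1}\,a
\;\geq\;\delta\,a,
\end{equation*}
where we used $\lambda_i\geq\lambda_{n-k}$ for $i\leq n-k$, $\lambda_i\leq\lambda_{n-k+1}$ and $p_i-1\leq 0$ for $i\geq n-k+1$, and Assumption \ref{ass:gap}.

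The remaining step, which I expect to be the main conceptual point (though the calculation is short), is to bound $\|\X-\X^*\|_F^2\leq 2a$ using only the Fantope membership of $\X$. Since $0\preceq\X\preceq\I$ and $\X$ is symmetric, $\X^2\preceq\X$, hence $\trace(\X^2)\leq\trace(\X)=k$. Expanding and using $\trace(\X\X^*)=\sum_{i=n-k+1}^np_i=k-a$,
\begin{equation*}
\|\X-\X^*\|_F^2\;=\;\trace(\X^2)-2\trace(\X\X^*)+\trace(\X^{*2})\;\leq\;k-2(k-a)+k\;=\;2a.
\end{equation*}
Combining the two displayed inequalities yields $\|\X-\X^*\|_F^2\leq 2a\leq\tfrac{2}{\delta}\langle\nabla f(\X^*),\X-\X^*\rangle\leq\tfrac{2}{\delta}(f(\X)-f(\X^*))$, which is the claimed bound. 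The whole argument hinges on the two structural facts that (i) the eigen-gap converts a \emph{linear} term into one controlled by $a$, and (ii) the Fantope constraint $\X^2\preceq\X$ controls $\|\X-\X^*\|_F^2$ by the same quantity $a$; no additional smoothness or strong convexity of $f$ is needed.
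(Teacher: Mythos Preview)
Your proof is correct and follows essentially the same route as the paper's: convexity to pass to $\langle\nabla f(\X^*),\X-\X^*\rangle$, diagonalization in the eigenbasis of $\nabla f(\X^*)$ with $\X^*=\sum_{i=n-k+1}^n\u_i\u_i^\top$, the eigen-gap to lower bound the linear term by $\delta\cdot a$ where $a=\sum_{i\le n-k}\u_i^\top\X\u_i$, and finally $\|\X\|_F^2\le k$ (equivalently $\X^2\preceq\X$) to get $\|\X-\X^*\|_F^2\le 2a$. The only cosmetic difference is that the paper invokes its Lemma~\ref{lem:opt} to identify $\X^*$ with the bottom-$k$ eigenspace projector, whereas you cite Ky Fan and Theorem~\ref{thm:optSol}; the algebra is otherwise the same.
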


\subsection{Gradient Orthogonal Iteration Analysis}\label{sec:GOI}
We outline the proof of our main algorithmic result --- the local linear convergence result of the gradient orthogonal iteration \eqref{eq:QRgrad} given in Theorem \ref{qr_theorem}. For convenience, we rewrite the Dynamics \eqref{eq:QRgrad} as Algorithm \ref{alg:QR} below which also introduces notation that will be helpful throughout the analysis. Throughout this section we also introduce the auxiliary sequence $\{\X_t\}_{t\geq 1}\subset\mF_{n,k}$ given by: $\X_1 = \Y_1$ and $\X_{t+1} = \Pi_{\mF_{n,k}}[\Y_t - \eta\nabla{}f(\Y_t)]$ for all $t\geq 1$.

At a  high-level, our analysis of Algorithm \ref{alg:QR} relies on the following two components:
\begin{enumerate}
\item
Using Lemma \ref{lem:rad} we can argue that, in the proximity of $\X^*$, $\rank(\X_t) = k$, i.e., $\X_t\in\mP_{n,k}$. This  implies that $\X_t$ is the projection matrix onto the span of top $k$ eigenvectors of $\W_{t}$.
\item
We view $\Q_t$ as the outcome of applying one iteration of the  orthogonal iterations method  \cite{golub1996matrix, saad2011numerical} to $\W_t$ (see also discussion in the Introduction). Combined with the previous point, this allows  to argue that $\Y_t = \Q_t\Q_t^{\top}$ is sufficiently close to the  projected gradient update $\X_t$, which drives the convergence. 
\end{enumerate}
\begin{algorithm}
\caption{Gradient Orthogonal Iteration}\label{alg:QR}
\begin{algorithmic}[1]
\State initialization: $\Y_1 = \Q_1\Q_1^{\top}$ for some $\Q_1\in\reals^{n\times k}$ such that $\Q_1^{\top}\Q_1 = \I$
\For{$t=1,2... $}
\State $\W_{t+1}\gets \Y_{t}-\eta\nabla f(\Y_{t})$
\State $(\Q_{t+1}, \R_{t+1}) \gets \textsc{QR-factorize}(\W_{t+1}\Q_{t})$ (that is $\Q_{t+1}\R_{t+1}=\W_{t+1}\Q_{t}$)
\State $\Y_{t+1}\gets \Q_{t+1}\Q_{t+1}^{\top}$
\EndFor
\end{algorithmic}
\end{algorithm}

The following key lemma establishes the connection between the sequence $\{\Y_t\}_{t\geq 1}$ produced by Algorithm \ref{alg:QR}, and the corresponding sequence of exact projected gradient steps $\{\X_t\}_{t\geq 1}$. The proof relies on an original extension of the classical orthogonal iteration method (see  \cite{golub1996matrix}).
\begin{lemma}\label{lem:QRit}
Fix some iteration $t\geq 1$. Suppose that $\eta < 1/G$, $\X_{t+1}\in\mP_{n,k}$, and $\Vert{\X_{t+1}-\Y_t}\Vert_F < \sqrt{2}$. It holds that,
$\Vert{\X_{t+1}-\Y_{t+1}}\Vert_F^2 \leq \frac{1}{1-\frac{1}{2}\Vert{\X_{t+1}-\Y_t}\Vert_F^2}\Big({\frac{\eta G}{1-\eta G}}\Big)^{2}\Vert{\X_{t+1}-\Y_t}\Vert_F^2$.
\end{lemma}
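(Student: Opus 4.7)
My plan is to reduce the lemma to a one-step analysis of the classical orthogonal iteration method applied to $\W_{t+1}$, treating $\X_{t+1}$ as the projection onto the top-$k$ eigenspace of $\W_{t+1}$, and then translating the standard tangent-contraction bound into Frobenius distances between projection matrices. Concretely, since we are told $\X_{t+1}\in\mP_{n,k}$, Lemma \ref{lem:proj} (and the fact that the Euclidean projection of a symmetric matrix onto $\mP_{n,k}$ is given by its top-$k$ spectral projector) implies $\X_{t+1} = \U\U^{\top}$ for $\U\in\reals^{n\times k}$ whose columns are the top-$k$ eigenvectors of $\W_{t+1}$. Write $\W_{t+1}=\U\Lambda_1\U^{\top}+\U_{\perp}\Lambda_2\U_{\perp}^{\top}$. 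Since $\Y_t$ is a projection matrix (eigenvalues in $\{0,1\}$) and $\|\eta\nabla f(\Y_t)\|\leq \eta G<1$, Weyl's inequality gives $|\lambda_i(\W_{t+1})|\geq 1-\eta G$ for $i\leq k$ (these are positive) and $|\lambda_i(\W_{t+1})|\leq \eta G$ for $i\geq k+1$, so $\|\Lambda_2\|\,\|\Lambda_1^{-1}\|\leq \frac{\eta G}{1-\eta G}$.

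Next I would run the standard orthogonal-iteration argument (the ``extension'' mentioned in the paper amounts to carrying out this argument for the perturbed operator $\W_{t+1}$ whose gap is generated by the projection structure of $\Y_t$ rather than by a statistical assumption). Write $\Q_t = \U C_1 + \U_\perp C_2$ with $C_1=\U^{\top}\Q_t$, $C_2=\U_\perp^{\top}\Q_t$. The hypothesis $\|\X_{t+1}-\Y_t\|_F^2<2$ translates, via $\|\X_{t+1}-\Y_t\|_F^2 = 2\sum_i \sin^2\theta_i(\Q_t,\U)$, into $\sin^2\theta_i(\Q_t,\U)<1$ for all $i$, so all principal angles are strictly below $\pi/2$ and $C_1$ is invertible. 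From $\W_{t+1}\Q_t=\Q_{t+1}\R_{t+1}$ I get $\U^{\top}\Q_{t+1}=\Lambda_1 C_1 \R_{t+1}^{-1}$ and $\U_\perp^{\top}\Q_{t+1}=\Lambda_2 C_2 \R_{t+1}^{-1}$, whence
\[
(\U_\perp^{\top}\Q_{t+1})(\U^{\top}\Q_{t+1})^{-1} = \Lambda_2 C_2 C_1^{-1}\Lambda_1^{-1}.
\]
Recalling that the singular values of $(\U_\perp^{\top}\Q)(\U^{\top}\Q)^{-1}$ are exactly the tangents of the principal angles between $\text{range}(\Q)$ and $\text{range}(\U)$, together with the multiplicative singular-value inequality $\sigma_i(A M B)\leq \|A\|\|B\|\sigma_i(M)$, I obtain
\[
\tan\theta_i(\Q_{t+1},\U) \;\leq\; \tfrac{\eta G}{1-\eta G}\,\tan\theta_i(\Q_t,\U),\qquad i=1,\dots,k.
\]

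Finally I would convert back to Frobenius norm. Using $\|\X_{t+1}-\Y_{t+1}\|_F^2 = 2\sum_i \sin^2\theta_i(\Q_{t+1},\U)$, the elementary $\sin^2\leq \tan^2$ bound, and the one-step contraction just derived,
\[
\|\X_{t+1}-\Y_{t+1}\|_F^2 \;\leq\; 2\Big(\tfrac{\eta G}{1-\eta G}\Big)^{2}\sum_i \tan^2\theta_i(\Q_t,\U).
\]
Then using $\tan^2\theta = \sin^2\theta/(1-\sin^2\theta)$ and the uniform bound $\sin^2\theta_i(\Q_t,\U)\leq \tfrac12\|\X_{t+1}-\Y_t\|_F^2$ in the denominator yields
\[
\sum_i \tan^2\theta_i(\Q_t,\U) \;\leq\; \frac{\sum_i \sin^2\theta_i(\Q_t,\U)}{1-\tfrac12\|\X_{t+1}-\Y_t\|_F^2} \;=\; \frac{\tfrac12\|\X_{t+1}-\Y_t\|_F^2}{1-\tfrac12\|\X_{t+1}-\Y_t\|_F^2},
\]
and substituting gives exactly the claimed inequality.

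\textbf{Where the difficulty lies.} The routine parts are the eigenvalue bounds from Weyl and the sine/tangent identities. The delicate step is the tangent contraction: it requires that $C_1=\U^{\top}\Q_t$ be invertible (ensured by the $\sqrt 2$ hypothesis) and that the ``signal'' block $\Lambda_1$ of $\W_{t+1}$ be well-conditioned relative to the ``noise'' block $\Lambda_2$. The novelty compared with classical statements is that this conditioning is not postulated but \emph{produced by the algorithm itself}: because $\Y_t$ has eigenvalues exactly $0$ and $1$, the perturbation of size $\eta G$ creates a spectral gap of width at least $1-2\eta G$ in $\W_{t+1}$, independently of any structure of $f$. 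Articulating this interplay cleanly inside the orthogonal-iteration framework is the main technical point.
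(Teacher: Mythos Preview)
Your proposal is correct and follows essentially the same route as the paper: identify $\X_{t+1}$ with the top-$k$ eigenprojector $\V_1\V_1^{\top}$ of $\W_{t+1}$, bound the spectral ratio $\|\Lambda_2\|\,\|\Lambda_1^{-1}\|\le \eta G/(1-\eta G)$ via Weyl using that $\Y_t$ has eigenvalues in $\{0,1\}$, derive the one-step orthogonal-iteration contraction from $\Q_{t+1}\R_{t+1}=\W_{t+1}\Q_t$, and translate back to Frobenius distances using $\|\V_2^{\top}\Q\|_F^2=\tfrac12\|\X_{t+1}-\Q\Q^{\top}\|_F^2$. The only cosmetic difference is that the paper stays in ``sine space'' throughout --- bounding $\|\V_2^{\top}\Q_{t+1}\|_F$ directly and absorbing the angle correction through the factor $1/\sigma_{\min}^2(\V_1^{\top}\Q_t)\ge \big(1-\tfrac12\|\X_{t+1}-\Y_t\|_F^2\big)^{-1}$ --- whereas you pass through a per-angle tangent contraction $\tan\theta_i^{(t+1)}\le\rho\,\tan\theta_i^{(t)}$ and then convert $\sum\tan^2$ back to $\sum\sin^2$; both yield the identical final inequality.
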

\begin{proof}[Proof of Lemma \ref{lem:QRit}]
Let us write the eigen-decomposition of $\W_{t+1}=\Y_t - \eta\nabla{}f(\Y_t)$ as:
\begin{align*}
\W_{t+1} = \V\Lambda\V^{\top} =
\begin{bmatrix}
    \V_1  &  \V_2     
\end{bmatrix}
\begin{bmatrix}
    \Lambda_{1}  &  0      \\
    0  &  \Lambda_{2}      
\end{bmatrix}
\begin{bmatrix}
    \V_{1}^{\top}       \\
    \V_{2}^{\top}       
\end{bmatrix},
\end{align*}
where $\V_1\in\reals^{n\times k},\Lambda_1\in\reals^{k\times k}$ correspond to the largest $k$ eigenvalues.

The main part of the proof will be to prove that
$ \|\V_{2}^{\top}\Q_{t+1}\|_{F}^{2}\leq  \frac{1}{\sigma^2_{\min}(\V_1^{\top}\Q_t)}\left({\frac{\eta G}{1-\eta G}}\right)^{2}\|\V_{2}^{\top}\Q_{t}\|_{F}^{2}$.

Note that by definition of $\X_{t+1}$ we have that,    
\begin{align*}
\X_{t+1}&=\argmin_{\X\in\mF_{n,k}}\Vert{\X - \W_{t+1}}\Vert_F^2 \underset{(a)}{=} \argmin_{\X\in\mP_{n,k}}\Vert{\X - \W_{t+1}}\Vert_F^2\underset{(b)}{=} \argmax_{\X\in\mP_{n,k}}\langle{\X,\W_{t+1}}\rangle = \V_1\V_1^{\top},
\end{align*}
where (a) follows from the assumption of the lemma that $\X_{t+1}\in\mP_{n,k}$, and (b) follows since all matrices in $\mP_{n,k}$ have the same Frobenius norm.

This further implies that
{\small
\begin{align}\label{eq:QRit:0}
\sigma^2_{\min}(\V_1^{\top}\Q_t) &= \lambda_k(\V_1^{\top}\Q_t\Q_t^{\top}\V_1) = \sum_{i=1}^k\lambda_i(\V_1^{\top}\Q_t\Q_t^{\top}\V_1) - \sum_{j=1}^{k-1}\lambda_j(\V_1^{\top}\Q_t\Q_t^{\top}\V_1)  \nonumber \\
&\geq \trace(\V_1^{\top}\Q_t\Q_t^{\top}\V_1) - (k-1)\lambda_1(\V_1^{\top}\Q_t\Q_t^{\top}\V_1) \geq\trace(\X_{t+1}\Y_t) - (k+1)  \nonumber \\
&= \left({k - \frac{1}{2}\Vert{\X_{t+1}-\Y_t}\Vert_F^2}\right) - (k-1) = 1 -  \frac{1}{2}\Vert{\X_{t+1}-\Y_t}\Vert_F^2.
\end{align}}
Thus, under the assumption that $\Vert{\X_{t+1}-\Y_t}\Vert_F <\sqrt{2}$, we have that $(\V_1^{\top}\Q_t)$ is invertible.

Since $(\Q_{t+1},\R_{t+1})$ is the QR factorization of $\W_{t+1}\Q_t$, using the eigen-decomposition of $\W_{t+1}$ we can write
$\Q_{t+1}\R_{t+1}=\V\Lambda\V^{\top}\Q_{t}$.
Multiplying both sides from the left by $\V^{\top}$   we get,
\begin{align*}
\begin{bmatrix}
    \V_{1}^{\top}\Q_{t+1}  \\
    \V_{2}^{\top}\Q_{t+1}     
\end{bmatrix}
\R_{t+1}=
\begin{bmatrix}
    \Lambda_{1}  &  0      \\
    0  &  \Lambda_{2}      
\end{bmatrix}
\begin{bmatrix}
    \V_{1}^{\top}\Q_{t}  \\
    \V_{2}^{\top}\Q_{t}     
\end{bmatrix}, 
\end{align*}
which leads to the two equations:
\begin{align}
\Lambda_{1}\V_{1}^{\top}\Q_{t}=\V_{1}^{\top}\Q_{t+1}\R_{t+1}, \label{eq:QRit:1st}\\
\Lambda_{2}\V_{2}^{\top}\Q_{t}=\V_{2}^{\top}\Q_{t+1}\R_{t+1}. \label{eq:QRit:2nd}
\end{align}

Under the assumption that $\eta < 1/G$, using Weyl's inequality we have that $\lambda_k(\W_{t+1}) 
\newline \geq \lambda_k(\Y_t) - \eta\lambda_1(\nabla{}f(\Y_t)) > 0$, and so $\Lambda_1$ is invertible. Since from \eqref{eq:QRit:0} we have that $\sigma_{\min}(\V_1^{\top}\Q_{t}) >0$, it follows that $\rank(\Lambda_1\V_1^{\top}\Q_t) = k$ and thus, from Equation \eqref{eq:QRit:1st} we have that $\V_1^{\top}\Q_{t+1}$ and $\R_{t+1}$ are both invertible and we can write
$\R_{t+1}=(\V_1^{\top}\Q_{t+1})^{-1}\Lambda_{1}\V_{1}^{\top}\Q_{t}$.

Multiplying both sides of Equation \eqref{eq:QRit:2nd} from the right with $\R_{t+1}^{-1}$, we get 
\begin{align*}
\V_{2}^{\top}\Q_{t+1}= \Lambda_{2}\V_{2}^{\top}\Q_{t}\left({(\V_1^{\top}\Q_{t+1})^{-1}\Lambda_{1}\V_{1}^{\top}\Q_{t}}\right)^{-1} = \Lambda_{2}\V_{2}^{\top}\Q_{t}(\V_{1}^{\top}\Q_{t})^{-1}\Lambda_{1}^{-1}\V_{1}^{\top}\Q_{t+1}.
\end{align*}

Now we can use this to bound $\|\V_{2}^{\top}\Q_{t+1}\|_{F}^{2}$:
\begin{align}\label{eq:QRit:1}
    &\|\V_{2}^{\top}\Q_{t+1}\|_{F}^{2} = \|\Lambda_{2}\V_{2}^{\top}\Q_{t}(\V_{1}^{\top}\Q_{t})^{-1}\Lambda_{1}^{-1}\V_{1}^{\top}\Q_{t+1}\|_{F}^{2} \nonumber \\
    &\underset{(a)}{\leq} \Vert{(\V_1^{\top}\Q_t)^{-1}}\Vert_2^2\Vert{\Lambda_1^{-1}}\Vert_2^2\Vert{\V_1^{\top}\Q_{t+1}}\Vert_2^2\Vert{\Lambda_2}\Vert_2^2\Vert{\V_2^{\top}\Q_t}\Vert_F^2 \underset{(b)}{\leq} \dfrac{\|\V_{2}^{\top}\Q_{t}\|_{F}^{2}}{\sigma_{\min}^{2}(\V_{1}^{\top}\Q_{t})}\left({\dfrac{\lambda_{k+1}(\W_{t+1})}{\lambda_{k}(\W_{t+1})}}\right)^{2},
\end{align}
where (a) follows from the inequalities $\Vert{\A\B}\Vert_F \leq \min\{\Vert{\A}\Vert_F\Vert{\B}\Vert_2, \Vert{\A}\Vert_2\Vert{\B}\Vert_F\}$,
\newline $\Vert{\A\B}\Vert_2 \leq \Vert{\A}\Vert_2\Vert{\B}\Vert_2$, and (b) follows from the eigen-decomposition of $\W_{t+1}$ and by noting that since $\V_1,\Q_{t+1}$ both have orthonormal columns, it holds that $\Vert{\V_1^{\top}\Q_{t+1}}\Vert_2 \leq 1$.

We upper-bound $\lambda_{k+1}(\W_{t+1})/\lambda_{k}(\W_{t+1})$ by using Weyl's inequality as follows:
\begin{align}\label{eq:QRit:4}
\dfrac{\lambda_{k+1}(\W_{t+1})}{\lambda_{k}(\W_{t+1})}\leq \dfrac{\lambda_{k+1}(\Y_{t})+\eta \lambda_{1}(-\nabla f(\Y_{t}))}{\lambda_{k}(\Y_{t})+\eta \lambda_{n}(-\nabla f(\Y_{t}))}\leq \dfrac{\eta G}{1-\eta G},
\end{align}
where we have used the fact that $\Y_t\in\mP_{n,k}$, and so $\lambda_k(\Y_t)=1,\lambda_{k+1}(\Y_t) = 0$.

Plugging \eqref{eq:QRit:4} into \eqref{eq:QRit:1} we indeed obtain,
\begin{align}\label{eq:QRit:main}
 \|\V_{2}^{\top}\Q_{t+1}\|_{F}^{2}\leq  \frac{1}{\sigma^2_{\min}(\V_1^{\top}\Q_t)}\left({\frac{\eta G}{1-\eta G}}\right)^{2}\|\V_{2}^{\top}\Q_{t}\|_{F}^{2}.
\end{align}

Now, for the final part of the proof, we note that $\Vert{\V_2^{\top}\Q_{t+1}}\Vert_F^2 = \trace(\V_2\V_2^{\top}\Y_{t+1}) =
\newline\trace((\I-\X_{t+1})\Y_{t+1}) = k - \trace(\X_{t+1}\Y_{t+1}) = \frac{1}{2}\Vert{\X_{t+1}-\Y_{t+1}}\Vert_F^2$, and similarly, $\Vert{\V_2^{\top}\Q_t}\Vert_F^2 = \frac{1}{2}\Vert{\X_{t+1}-\Y_t}\Vert_F^2$. Plugging these observations and \eqref{eq:QRit:0} into \eqref{eq:QRit:main}, we obtain the lemma. 
\end{proof}

The following lemma is the main step in the proof of the convergence rate of Algorithm \ref{alg:QR}.
\begin{lemma}\label{lem:errorDec}
Let us denote $h_{t}=f(\Y_{t})-f(\X^*)$ for all $t\geq 1$. Fix some iteration $t$ of Algorithm \ref{alg:QR}, and
suppose that $\eta \leq \frac{1}{5\max\{\beta,G\}}$, $\X_{t+1}\in\mP_{n,k}$, and that  $\Vert{\X_{t+1}-\Y_{t}}\Vert_F \leq 1$.  Denote the constants 
$C_0 =  2\Big({\frac{\eta G}{1-\eta G}}\Big)^{2}$, $C_1 =  \frac{2(1+\eta{}G)C_0}{1-2\eta\beta-2C_0(1+\eta{}G)}$.
 It holds that, 
$h_{t+1} \leq  \Big({1 - \frac{\eta\delta}{4(1+C_1)}}\Big)h_t$, where $\delta>0$ is the constant from Assumption \ref{ass:gap}.
\end{lemma}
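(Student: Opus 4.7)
The plan is to combine the $\beta$-smoothness descent from $\Y_t$ to $\Y_{t+1}$ with a variational analysis of the auxiliary PGD iterate $\X_{t+1}$, using Lemma \ref{lem:QRit} to control the QR-approximation error $\Y_{t+1}-\X_{t+1}$ and Lemma \ref{lem:quad} (quadratic growth) to drive the linear contraction. Throughout I write $\phi(\X) := \langle \nabla f(\Y_t), \X-\Y_t\rangle + \tfrac{1}{2\eta}\|\X-\Y_t\|_F^2$; by construction $\X_{t+1}$ minimizes $\phi$ over $\mF_{n,k}$, and taking $\X = \Y_t$ in its first-order optimality yields the standard bound $\|\X_{t+1}-\Y_t\|_F^2 \leq -2\eta\phi(\X_{t+1})$.

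Starting from the descent inequality $f(\Y_{t+1}) - f(\Y_t) \leq \langle \nabla f(\Y_t), \Y_{t+1}-\Y_t\rangle + \tfrac{\beta}{2}\|\Y_{t+1}-\Y_t\|_F^2$ and splitting $\Y_{t+1}-\Y_t = (\X_{t+1}-\Y_t) + (\Y_{t+1}-\X_{t+1})$, the crucial step is a dimension-free upper bound on $\langle \nabla f(\Y_t), \Y_{t+1}-\X_{t+1}\rangle$. A direct Cauchy-Schwarz via spectral-nuclear duality would incur a cost $\|\Y_{t+1}-\X_{t+1}\|_* \leq \sqrt{2k}\|\Y_{t+1}-\X_{t+1}\|_F$, spoiling the dimension-independence of the rate; this is the main technical obstacle. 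Instead, using $\eta\nabla f(\Y_t) = \Y_t - \W_{t+1}$ I rewrite the inner product as $\eta^{-1}[\langle \Y_t, \Y_{t+1}-\X_{t+1}\rangle - \langle \W_{t+1}, \Y_{t+1}-\X_{t+1}\rangle]$ and compute the first piece exactly via the projection-trace identity $\trace(\P\Q) = k - \tfrac{1}{2}\|\P-\Q\|_F^2$ for $\P,\Q\in\mP_{n,k}$. For the second, the hypothesis $\X_{t+1}\in\mP_{n,k}$ (supplied upstream by Lemma \ref{lem:rad}) makes $\X_{t+1}$ the Ky-Fan maximizer of $\langle \cdot,\W_{t+1}\rangle$ over $\mP_{n,k}$; reusing the eigendecomposition $\W_{t+1}=\V\Lambda\V^\top$ from the proof of Lemma \ref{lem:QRit} and setting $\gamma := \|\V_2^\top\Q_{t+1}\|_F^2 = \tfrac{1}{2}\|\Y_{t+1}-\X_{t+1}\|_F^2$ gives $\langle \W_{t+1}, \X_{t+1}-\Y_{t+1}\rangle \leq (\lambda_1(\W_{t+1})-\lambda_n(\W_{t+1}))\gamma \leq 2\|\W_{t+1}\|_2\gamma \leq 2(1+\eta G)\gamma$ via Weyl's inequality.

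Substituting these back yields $f(\Y_{t+1}) - f(\Y_t) \leq \phi(\X_{t+1}) + (\tfrac{\beta}{2}-\tfrac{1}{2\eta})\|\Y_{t+1}-\Y_t\|_F^2 + \tfrac{1+\eta G}{\eta}\|\Y_{t+1}-\X_{t+1}\|_F^2$. Since the middle coefficient is non-positive under $\eta \leq 1/(5\beta)$, combining it with a Young-type lower bound on $\|\Y_{t+1}-\Y_t\|_F^2$ (in terms of $\|\X_{t+1}-\Y_t\|_F^2$ and $\|\Y_{t+1}-\X_{t+1}\|_F^2$) only tightens the descent; then Lemma \ref{lem:QRit} (whose hypothesis $\|\X_{t+1}-\Y_t\|_F < \sqrt{2}$ is inherited from $\|\X_{t+1}-\Y_t\|_F\leq 1$, so that the prefactor $1/(1-\tfrac{1}{2}\|\cdot\|_F^2)$ is at most $2$ and gives $\|\Y_{t+1}-\X_{t+1}\|_F^2 \leq C_0\|\X_{t+1}-\Y_t\|_F^2$) and the projection bound $\|\X_{t+1}-\Y_t\|_F^2 \leq -2\eta\phi(\X_{t+1})$ collapse everything into a multiple of $\phi(\X_{t+1})$. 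Careful coefficient tracking produces exactly $f(\Y_{t+1}) - f(\Y_t) \leq \phi(\X_{t+1})/(1+C_1)$, with the denominator $1-2\eta\beta-2C_0(1+\eta G)$ of $C_1$ verified to be positive by the step-size bookkeeping $2\eta\beta\leq 2/5$, $C_0\leq 1/8$, $2C_0(1+\eta G)\leq 3/10$.

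Finally, to turn this into linear contraction I invoke the $1/\eta$-strong convexity of $\phi$, namely $\phi(\X_{t+1}) \leq \phi(\X) - \tfrac{1}{2\eta}\|\X-\X_{t+1}\|_F^2$ for every $\X \in \mF_{n,k}$. Plugging in the convex combination $\X = \lambda\X^* + (1-\lambda)\Y_t \in \mF_{n,k}$, expanding $\phi(\X)$, and using convexity $\langle \nabla f(\Y_t),\X^*-\Y_t\rangle \leq -h_t$ together with the quadratic-growth bound $\|\X^*-\Y_t\|_F^2 \leq \tfrac{2}{\delta}h_t$ of Lemma \ref{lem:quad} yields $\phi(\X_{t+1}) \leq (-\lambda + \tfrac{\lambda^2}{\eta\delta})h_t$. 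Optimizing at $\lambda = \eta\delta/2$ (which lies in $[0,1]$ since $\delta \leq 2G$ forces $\eta\delta/2 \leq \eta G \leq 1/5$ under our step size) gives $\phi(\X_{t+1}) \leq -\tfrac{\eta\delta}{4}h_t$; plugging into the previous display and adding $h_t$ to both sides produces the claimed bound $h_{t+1} \leq (1 - \tfrac{\eta\delta}{4(1+C_1)})h_t$.
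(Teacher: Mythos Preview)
Your argument is correct and uses the same toolkit as the paper (the quadratic inner-product bound $\langle \W_{t+1},\X_{t+1}-\Y_{t+1}\rangle \le \Vert{\W_{t+1}}\Vert_2\Vert{\X_{t+1}-\Y_{t+1}}\Vert_F^2$, Lemma~\ref{lem:QRit}, and the quadratic-growth Lemma~\ref{lem:quad} applied along the segment $(1-\lambda)\Y_t+\lambda\X^*$), but the bookkeeping is organized differently. The paper controls the error term $\Vert{\X_{t+1}-\Y_{t+1}}\Vert_F^2$ by first invoking Lemma~\ref{lem:distXY} to bound $\Vert{\X_{t+1}-\Y_t}\Vert_F^2$ in terms of $f(\Y_t)-f(\X_{t+1})$, then lower-bounding $f(\X_{t+1})$ via $f(\Y_{t+1})$, which yields a self-referential inequality $\Vert{\X_{t+1}-\Y_{t+1}}\Vert_F^2 \lesssim h_t-h_{t+1}$ that is then rearranged. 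You instead bound $\Vert{\X_{t+1}-\Y_t}\Vert_F^2 \le -2\eta\,\phi(\X_{t+1})$ directly from strong convexity of $\phi$, express everything as a multiple of $\phi(\X_{t+1})$, and only then bound $\phi(\X_{t+1})\le -\tfrac{\eta\delta}{4}h_t$. This is a bit more streamlined and avoids the detour through Lemma~\ref{lem:distXY}.

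Two small remarks. First, the Young-type lower bound on $\Vert{\Y_{t+1}-\Y_t}\Vert_F^2$ you mention is unnecessary: since the coefficient $\tfrac{\beta}{2}-\tfrac{1}{2\eta}$ is nonpositive, simply discarding that term already gives $f(\Y_{t+1})-f(\Y_t)\le (1-2C_0(1+\eta G))\,\phi(\X_{t+1})$. Second, this factor is strictly larger than $1/(1+C_1)=\tfrac{1-2\eta\beta-2C_0(1+\eta G)}{1-2\eta\beta}$ (your route never picks up the $2\eta\beta$ appearing in the paper's denominator), so the claim of matching the constant ``exactly'' is a slight overstatement; you in fact obtain a marginally sharper contraction, and the stated lemma follows \emph{a fortiori}.
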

\begin{proof}
Using the $\beta$-smoothness of  $f(\X)$, for any $\X\in\mF_{n,k}$ and $\eta\leq \dfrac{1}{\beta}$ it holds that
\begin{align}\label{eq:lem:err:1}
f(\X) &  \leq f(\Y_t) + \langle{\X-\Y_t,\nabla{}f(\Y_t)}\rangle + \frac{1}{2\eta}\Vert{\X-\Y_t}\Vert_F^2 \nonumber\\
&\underset{(a)}{\leq} f(\Y_t) + \langle{\X-\Y_t,\nabla{}f(\Y_t)}\rangle + \eta^{-1}\langle{\Y_t,\Y_t-\X}\rangle \nonumber \\
&= f(\Y_t) + \eta^{-1}\langle{\Y_t-\X,\Y_t -\eta\nabla{}f(\Y_t)}\rangle,
\end{align}
where (a) follows since using the fact that $\Y_t\in\mP_{n,k}$, we have that for any $\X\in\mF_{n,k}$ it holds that $\Vert{\X}\Vert_F^2 \leq k = \Vert{\Y_t}\Vert_F^2 = \langle{\Y_t,\Y_t}\rangle$.


Since $\X_{t+1} = \Pi_{\mF_{n,k}}[\Y_t-\eta\nabla{}f(\Y_t)] = \arg\min_{\X\in\mF_{n,k}}\Vert{\X-(\Y_t-\eta\nabla{}f(\Y_t))}\Vert_F^2$, and by the assumption of the lemma that $\X_{t+1}\in\mP_{n,k}$, using the first-order optimality condition, it can be shown that for all $\Z\in\mF_{n,k}$: $\langle{\X_{t+1}-\Z,\Y_t-\eta_t\nabla{}f(\Y_t)}\rangle \geq 0$, see Lemma \ref{lem:opt}. This implies that for all $\Z\in\mF_{n,k}$:
\begin{align}\label{eq:lem:err:2}
&\langle \Y_{t+1},\Y_{t}-\eta\nabla f(\Y_{t}) \rangle = \langle \X_{t+1},\Y_{t}-\eta\nabla f(\Y_{t}) \rangle - \langle \X_{t+1}-\Y_{t+1},\Y_{t}-\eta\nabla f(\Y_{t}) \rangle \geq \nonumber\\
& \langle \Z,\Y_{t}-\eta\nabla f(\Y_{t}) \rangle - \langle \X_{t+1}-\Y_{t+1},\Y_{t}-\eta\nabla f(\Y_{t}) \rangle \geq \nonumber\\
& \langle \Z,\Y_{t}-\eta\nabla f(\Y_{t}) \rangle - \|\X_{t+1}-\Y_{t+1}\|_F^{2}\|\W_{t+1}\|_{2},
\end{align}
where the last inequality is due to Lemma \ref{lem:aux}, which uses again the facts that $\X_{t+1}\in\mP_{n,k}$ and   $\X_{t+1} = \arg\min_{\X\in\mF_{n,k}}\Vert{\X-(\Y_t-\eta\nabla{}f(\Y_t))}\Vert_F^2$, which in turn imply that $\X_{t+1}=\arg\max_{\X\in\mP_{n,k}}\langle{\X,\W_{t+1}}\rangle$, and recalling that $\W_{t+1}=\Y_t-\eta\nabla{}f(\Y_t)$.

Setting $\X = \Y_{t+1}$ in \eqref{eq:lem:err:1} and plugging-in \eqref{eq:lem:err:2}, we have that for any $\Z\in\mF_{n,k}$ it holds that,
\begin{align}\label{eq:lem:err:3}
f(\Y_{t+1}) &\leq f(\Y_t)  + \eta^{-1}\left({\langle{\Y_t-\Z,\Y_t -\eta\nabla{}f(\Y_t)}\rangle +\|\X_{t+1}-\Y_{t+1}\|_F^{2}\|\W_{t+1}\|_{2}}\right)\nonumber \\
&=f(\Y_t) + \langle{\Z-\Y_t,\nabla{}f(\Y_t)}\rangle + \frac{1}{2\eta}\Vert{\Z-\Y_t}\Vert_F^2 + \frac{1}{\eta}\|\X_{t+1}-\Y_{t+1}\|_F^{2}\|\W_{t+1}\|_{2} \nonumber \\
&\leq f(\Y_t) + \langle{\Z-\Y_t,\nabla{}f(\Y_t)}\rangle + \frac{1}{2\eta}\Vert{\Z-\Y_t}\Vert_F^2 + \frac{1+\eta{}G}{\eta}\|\X_{t+1}-\Y_{t+1}\|_F^{2},
\end{align}
where the last inequality is due to the following upper-bound on $\Vert{\W_{t+1}}\Vert_2$:
\begin{align*}
\Vert{\W_{t+1}}\Vert_2 = \Vert{\Y_t - \eta\nabla{}f(\Y_t)}\Vert_2 \leq \Vert{\Y_t}\Vert_2 + \eta\Vert{\nabla{}f(\Y_t)}\Vert_2 \leq 1+ \eta{}G.
\end{align*}

In particular, setting $\Z=(1-\alpha)\Y_{t}+\alpha\X^{*}$ for some $\alpha\in[0,1]$, we get that
\begin{align*}
f(\Y_{t+1}) &\leq f(\Y_t) + \alpha\langle{\X^*-\Y_t,\nabla{}f(\Y_t)}\rangle + \frac{\alpha^2}{2\eta}\Vert{\X^*-\Y_t}\Vert_F^2 + \frac{1+\eta{}G}{\eta}\|\X_{t+1}-\Y_{t+1}\|_F^{2}.
\end{align*}
Subtracting $f(\X^*)$ from both sides, using the convexity of $f(\cdot)$, and Lemma \ref{lem:quad} gives
\begin{align*}
h_{t+1} &\leq \Big({1-\alpha+\dfrac{\alpha^2}{\eta\delta}}\Big)h_{t} + \frac{1+\eta{}G}{\eta}\|\X_{t+1}-\Y_{t+1}\|_F^{2}.
\end{align*}
Setting $\alpha=\eta\delta/2$ (note that since $\eta \leq 1/G$, we have that $\alpha\in[0,1]$), gives
\begin{align}\label{eq:lem:err:4}
h_{t+1} &\leq \Big({1-\frac{\eta\delta}{4}}\Big)h_{t} + \frac{1+\eta{}G}{\eta}\|\X_{t+1}-\Y_{t+1}\|_F^{2}.
\end{align}
We now continue to upper-bound the term $\|\X_{t+1}-\Y_{t+1}\|_F^{2}$. 
Using Lemma  \ref{lem:distXY}, which apply standard arguments in the analysis of first-order methods, that rely only on the facts that $\X_{t+1} = \Pi_{\mF_{n,k}}[\Y_t - \eta\nabla{}f(\Y_t)]$ and that $f(\cdot)$ is smooth and convex, we have that
\begin{align}\label{eq:lem:err:5}
\Vert{\X_{t+1}-\Y_t}\Vert_F^2 \leq \frac{\eta}{1-\eta\beta}\left({f(\Y_t) - f(\X_{t+1})}\right).
\end{align}

Let us set $\Z = \X_{t+1}$ in \eqref{eq:lem:err:3} to obtain that
\begin{align*}
f(\Y_{t+1}) &\leq f(\Y_t) + \langle{\X_{t+1}-\Y_t,\nabla{}f(\Y_t)}\rangle + \frac{1}{2\eta}\Vert{\X_{t+1}-\Y_t}\Vert_F^2 + \frac{1+\eta{}G}{\eta}\|\X_{t+1}-\Y_{t+1}\|_F^{2} \\
&\leq f(\X_{t+1}) + \frac{1}{2\eta}\Vert{\X_{t+1}-\Y_t}\Vert_F^2 + \frac{1+\eta{}G}{\eta}\|\X_{t+1}-\Y_{t+1}\|_F^{2},
\end{align*}
where the last inequality is due to convexity of $f(\cdot)$. Rearranging and using Lemma \ref{lem:QRit} along with the notation $C_0 =  2\left({\frac{\eta G}{1-\eta G}}\right)^{2}$, we have 
$f(\X_{t+1}) \geq f(\Y_{t+1}) -  \frac{1}{\eta}\Big({\frac{1}{2} + C_0(1+\eta{}G)}\Big)\Vert{\X_{t+1}-\Y_t}\Vert_F^2$.
Plugging into \eqref{eq:lem:err:5} we obtain
\begin{align*}
\Vert{\X_{t+1}-\Y_t}\Vert_F^2 \leq \frac{\eta}{1-\eta\beta}\Big({f(\Y_t) - f(\Y_{t+1}) +  \frac{1}{\eta}\Big({\frac{1}{2} + C_0(1+\eta{}G)}\Big)\Vert{\X_{t+1}-\Y_t}\Vert_F^2}\Big),
\end{align*}
and rearranging we obtain
\begin{align*}
\Vert{\X_{t+1}-\Y_t}\Vert_F^2 &\leq \frac{1}{1-\frac{1+2C_0(1+\eta{}G)}{2(1-\eta\beta)}}\frac{\eta}{1-\eta\beta}\left({f(\Y_t)-f(\Y_{t+1})}\right) = \frac{2\eta(h_t-h_{t+1})}{2(1-\eta\beta)-1-2C_0(1+\eta{}G)}.
\end{align*}
Using Lemma \ref{lem:QRit} again we have,
$\Vert{\X_{t+1}-\Y_{t+1}}\Vert_F^2 \leq \frac{2\eta{}C_0}{1-2\eta\beta-2C_0(1+\eta{}G)}\left({h_t-h_{t+1}}\right)$.
Plugging back into \eqref{eq:lem:err:4} we obtain
$h_{t+1} \leq \Big({1-\frac{\eta\delta}{4}}\Big)h_{t} + \frac{2(1+\eta{}G)C_0}{1-2\eta\beta-2C_0(1+\eta{}G)}\left({h_t - h_{t+1}}\right)$.
Denoting $C_1 =  \frac{2(1+\eta{}G)C_0}{1-2\eta\beta-2C_0(1+\eta{}G)}$, we finally obtain
$h_{t+1} \leq \frac{1}{1+C_1}\Big({1-\frac{\eta\delta}{4}+C_1}\Big)h_t = \Big({1 - \frac{\eta\delta}{4(1+C_1)}}\Big)h_t$,
as required.
The only thing  left is to choose a feasible step size. We have to require:
$1-2\eta\beta-2C_0(1+\eta{}G)>0$.
The latter holds for any $\eta \leq \frac{1}{5\max\{\beta,G\}}$.
\end{proof}
\section{Numerical Simulations}\label{sec:experiments}
We turn to discuss our numerical simulations. Some of the implementation details and results are deferred to the appendix.
We consider two models for robust recovery of a low-dimensional subspace from noisy samples: 1. a \textit{spiked covariance} model, and 2. a \textit{sparsely corrupted entries} model. In both models we minimize a robust loss based on the Huber function, which is convex and smooth, over the Fantope. We generate random instances and solve them to high precision (duality gap $<10^{-10}$) and take the resulting point $\X^*$ as the optimal solution. We measure the eigen-gap in $\nabla{}f(\X^*)$ (as in Assumption \ref{ass:gap}), and we compare the recovery error w.r.t. the naive PCA solution $\X_{PCA}$ which simply computes the leading subspace of the empirical covariance. 
The results are given in Table \ref{table:1}. As can be seen, for both models the recovery error is significantly lower than that of naive PCA, which demonstrates the usefulness of the chosen models . We see that  the eigen-gap assumption  indeed holds with substantial values of $\delta$.\\
\vspace{-6pt}
\begin{table*}[h!]\renewcommand{\arraystretch}{1.3}
{\footnotesize
\begin{center}
\begin{tabular}{ | P{7em} | P{5em} | P{5em}| P{4em} | P{4em} | P{5em} | P{4em}  |} 
  \hline
   Noise  prob. ($p$) & 0.05 & 0.1 & 0.2 & 0.3 & 0.4 & 0.5\\ 
\hline
\multicolumn{7}{|c|}{$\downarrow$ Model 1: spiked covariance $\downarrow$ }\\ \hline
  Eigen-gap ($\delta$) & 3.21& 2.87 & 2.36 & 2.04& 1.501 & 1.03\\
  \hline
  $\|\X^{*}-\P\|_{F}$ & $0.0047$ & $0.0075$ & $0.012$ & $0.016$ & $0.022$ &  $0.0298$\\ 
  \hline
  $\|\X_{PCA}-\P\|_{F}$ & 0.045 & 0.072 & 0.115&  0.157 & 0.212 & 0.292 \\ 
  \hline
 \multicolumn{7}{|c|}{$\downarrow$ Model 2: sparsely corrupted entries $\downarrow$ }\\ \hline
  Eigen-gap ($\delta$)& 5.72& 5.49 & 5.15 & 4.81& 4.38 &3.79\\
  \hline
  $\|\X^{*}-\P\|_{F}$ & $0.049$ & $0.067$ & $0.097$ & $0.111$ & $0.134$ &  $0.148$\\ 
  \hline
  $\|\X_{PCA}-\P\|_{F}$ & 0.148 & 0.199& 0.291&  0.335 & 0.401 & 0.439 \\ 
  \hline
\end{tabular}
\caption{Recovery and eigen-gap results for the spiked covariance and sparsely corrupted entries models with varying noise probabilities. $\P$ is the projection matrix onto the ground truth subspace. $n=100$, $k=10$, sample size $m=500$. Results are averages of 20 i.i.d. experiments.}\label{table:1}
\end{center}
}
\vskip -0.2in
\end{table*}\renewcommand{\arraystretch}{1} 

We additionally test the empirical convergence of nonconvex PGD (Dynamics  \eqref{eq:nonconvProjGrad}) and the gradient orthogonal iteration method (GOI, Dynamics \eqref{eq:QRgrad}) on the two models. We initialize both methods with the PCA solution $\X_{PCA}$ and use the same fixed step-size for both. We examine the convergence of both methods in terms of recovery error and approximation error (w.r.t. the objective function). Additionally, to showcase the benefit of avoiding exact SVD computations (as employed by nonconvex PGD) and using only a single QR factorization per iteration (as in GOI), we compare the runtimes of GOI and nonconvex PGD, but we exclude the time it takes to compute the gradient on each iteration and only account for the time it takes to perform either a $\rank-k$ SVD or a QR factorization, where both algorithms were implemented in Python and we have used the built-in functions \textsc{numpy.linalg.eigh} and \textsc{numpy.linalg.qr} to compute thin-SVDs and QR factorizations, respectively. Finally, we verify during the run of nonconvex PGD, that on each iteration, the projection onto $\mP_{n,k}$ is indeed the same as the projection onto the Fantope $\mF_{n,k}$ (see Remark \ref{remark:rank}), which suggests that nonconvex PGD indeed converges to the global minimum.

The results for the spiked covariance model are given in Figure \ref{fig:model1} (the results for the sparsely corrupted entries model are very similar and given in the appendix). It can be seen that indeed the distance between the iterates of the two methods decays very quickly and so the graphs of the recovery and approximation errors of both methods coincide. We  see that both methods demonstrate a linear convergence rate (w.r.t. the objective). We also see the significant savings in runtime when replacing a thin-SVD computation (in nonconvex PGD) with only a single QR factorization (in GOI).

\begin{figure}[h!]
	\centering
	\begin{minipage}[b]{0.24\textwidth}
		\includegraphics[width=\textwidth]{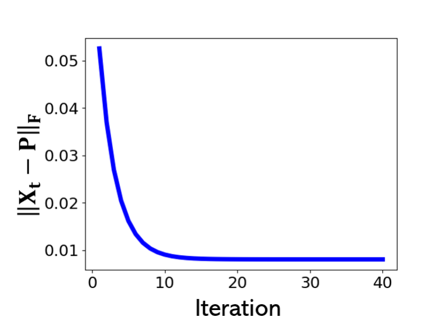}
	\end{minipage}
	\begin{minipage}[b]{0.24\textwidth}
		\includegraphics[width=\textwidth]{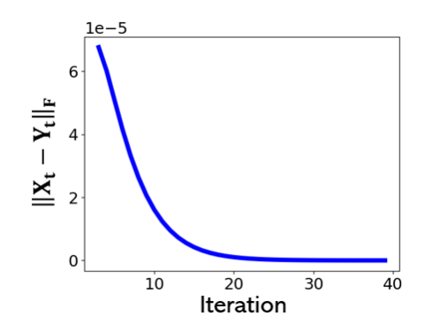}
	\end{minipage}
	\begin{minipage}[b]{0.24\textwidth}
		\includegraphics[width=\textwidth]{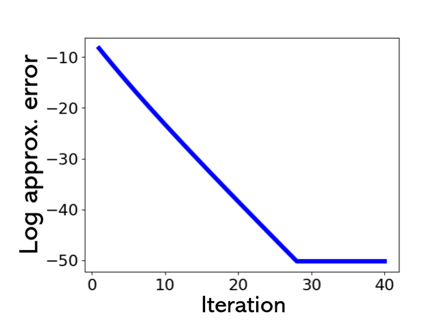}
	\end{minipage}
	\begin{minipage}[b]{0.24\textwidth}
		\includegraphics[width=\textwidth]{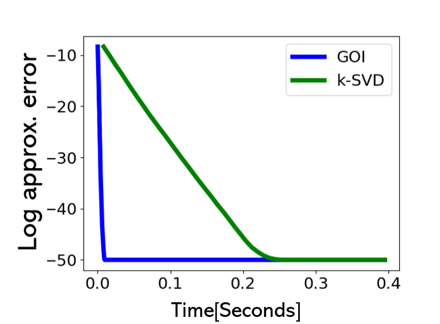}
	\end{minipage}
%
	 \caption{Convergence of PGD and GOI for the spiked covariance model with $p=0.1$. 1st and 3rd panels from the left show the recovery error ($\P$ is the ground truth projection matrix) and approximation error w.r.t. objective value of PGD, respectively. Convergence of GOI is omitted since it coincides with that of PGD. 2nd panel from the left shows the distance (in Frobenius norm) between the iterates of PGD $(\X_t)$ and those of GOI $(\Y_t)$. The rightmost panel shows the approximation error (in log scale) vs. time, when only the time to compute matrix factorizations is taken into account.} \label{fig:model1}\end{figure}
	 
\paragraph*{Importance of warm-start initalization:} We examine the performance of nonconvex PGD over $\mP_{n,k}$ (Dynamics \eqref{eq:nonconvProjGrad}) for the spiked covariance model considered above, but this time, when initialized with a random (uniformly distributed) projection matrix. We compare it with convex PGD which optimizes over the Fantope $\mF_{n,k}$ and uses a full-rank SVD to compute the projection. We use the same step-size as before. We see in Figure \ref{fig:random_2} (right panel) two trends. First, we clearly see that PGD w.r.t. $\mP_{n,k}$ and $\mF_{n,k}$ produce very different iterates which in particular implies that, as opposed to the case of warm-start initializaion, the projections of convex PGD onto the Fantope, throughout most of the run are not rank-$k$. Second, we see that nonconvex PGD is significantly slower than convex PGD. Thus, while both methods eventually converge to the same error, this suggests that far from a global minimizer, the behaviour of nonconvex gradient methods is indeed significantly different than in the local proximity of one, which supports the fact that our theoretical guarantees only hold in a local neighbourhood of a minimizer. 
\begin{figure}[h!]
	\centering
	\begin{minipage}[b]{0.280\textwidth}
		\includegraphics[width=\textwidth]{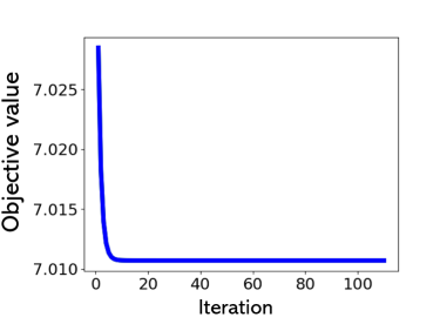}
	\end{minipage}
	\begin{minipage}[b]{0.28\textwidth}
		\includegraphics[width=\textwidth]{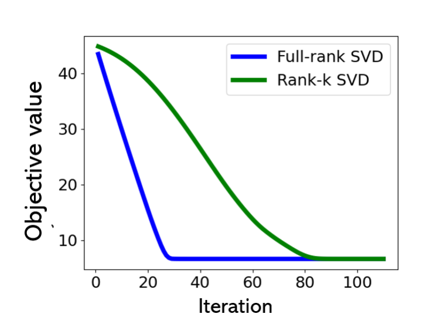}
	\end{minipage}
  \caption{Convergence of PGD for the spiked covariance model with $p=0.1$ over the Fantope $\mF_{n,k}$ with a full-rank SVD, and over $\mP_{n,k}$ with rank-$k$ SVD, when initialized with the PCA solution (left panel) and with random initialization (right panel). In the left panel the plots exactly coincide.} \label{fig:random_2}
\end{figure}
	
\bibliographystyle{plain}
\bibliography{bibs.bib}

\appendix

\section{Additional Details on Experiments}
The first robust recovery model we consider is a spiked covariance model, in which we draw a uniformly distributed projection matrix onto a $k$-dimensional subspace $\P \in \mP_{n,k}$, and we generate $m$ samples $\q_1,\dots,\q_m\in\reals^n$ such that for each $i\in[m]$, we set $\q_i =\P\z_i / \Vert{\P\z_i}\Vert$ with probability $1-p$, and $\q_ i = \z_i$ with probability $p$, where $p\in(0,0.5]$, and $\z_1,\dots,\z_m$ are i.i.d. uniformly distributed unit vectors. The goal is to recover $\P$ by minimizing the following objective function over $\mF_{n,k}$:
\begin{align*}
f(\X)=\sum_{i=1}^{m} \textrm{Huber}_{\gamma}(\|\q_{i}-a\X\q_{i}\|), \quad 
\textrm{Huber}_{\gamma}(x):=
\begin{cases}
\frac{1}{2}x^{2} &\text{if $|x|\leq \gamma$}\\
\gamma (|x|-\frac{1}{2}\gamma) &\text{else} .
\end{cases}
\end{align*}
Here $a\in(0,1]$ is a regularization parameter and we set it to slightly less than one.

The second model we consider is that of sparsely corrupted entries in which  we again draw a uniformly distributed projection matrix $\P$. This time the data points $\q_1,\dots,\q_m$ are generated by taking $\q_i = \P\z_i/\Vert{\P\z_i}\Vert$ for each $i\in[m]$, where as before $\z_1,\dots,\z_m$ are i.i.d. uniformly distributed unit vectors, but for each $i\in[m]$, with probability $p$, we pick a uniformly distributed entry $j\in[n]$ and set it to $-1$ or $+1$ (with equal probability). The goal is to recover $\P$ by minimizing the following objective function over $\mF_{n,k}$:
\begin{align*}
f(\X)=\sum_{i=1}^{m}\sum_{j=1}^{n} \textrm{Huber}_{\gamma}([\q_{i}]_{j}-[a\X\q_{i}]_{j}),
\end{align*}
where here also $a\in(0,1]$ is a regularization parameter.

For both models we set the Huber loss parameter to $\gamma = 0.1$. For the first model we set $a=0.9$ and for the second $a = 0.8$. For a given projection matrix $\X\in\mP_{n,k}$, we measure the recovery error according to $\Vert{\X-\P}\Vert_F^2$. For both models we let $\X_{PCA}\in\mP_{n,k}$ denote the standard PCA solution, i.e., the projection matrix onto the span of the top $k$ eigenvectors of the empirical covariance $\frac{1}{m}\sum_{i=1}^m\q_i\q_i^{\top}$. For both models we set $n=100$, $k=10$, and $m=500$. For both models we use the projected gradient method to find a projection matrix $\X^*\in\mP_{n,k}$ which has negligible dual gap ($<10^{-10}$). \footnote{For $\X\in\mF_{n,k}$ the dual gap is defined as $\textrm{dg}(\X) = \langle{\X-\V,\nabla{}f(\X)}\rangle$, were $\V\in\argmin_{\Z\in\mP_{n,k}}\langle{\Z,\nabla{}f(\X)}\rangle$. Since $f(\cdot)$ is convex, we in particular have $f(\X) - \min_{\Y\in\mF_{n,k}}f(\Y) \leq \textrm{dg}(\X)$.} For this $\X^*$ we measure the corresponding eigen-gap $\lambda_{n-k}(\nabla{}f(\X^*)) - \lambda_{n-k+1}(\nabla{}f(\X^*))$, and the recovery error $\Vert{\X^*-\P}\Vert_F$. The results are given in Table \ref{table:1}. For each set of parameters the results are the average of 20 i.i.d. experiments.

\begin{figure}
	\centering
	\begin{minipage}[b]{0.24\textwidth}
		\includegraphics[width=\textwidth]{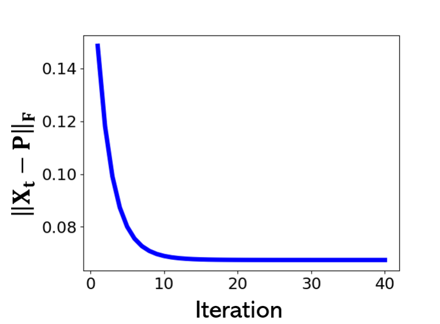}
	\end{minipage}
	\begin{minipage}[b]{0.24\textwidth}
		\includegraphics[width=\textwidth]{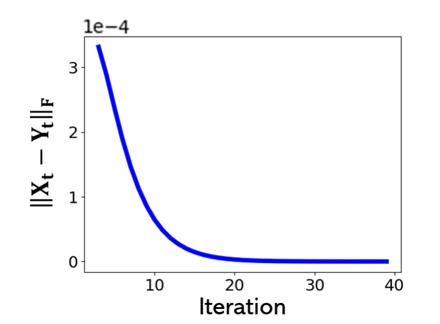}
	\end{minipage}
	\begin{minipage}[b]{0.24\textwidth}
		\includegraphics[width=\textwidth]{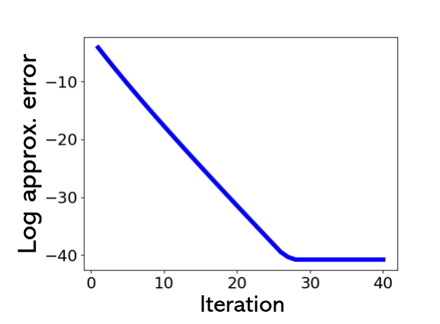}
	\end{minipage}
	\begin{minipage}[b]{0.24\textwidth}
		\includegraphics[width=\textwidth]{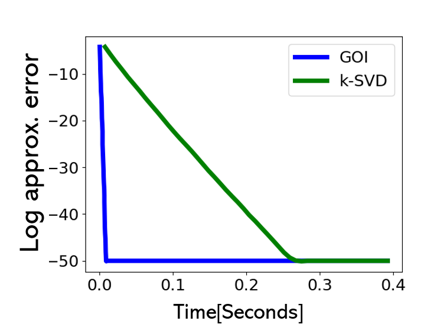}
	\end{minipage}
  \caption{Convergence of PGD and GOI for the sparsely corrupted entries model with $p=0.1$. 1st and 3rd panels from the left show the recovery error ($\P$ is the ground truth projection matrix) and approximation error w.r.t. objective value of PGD, respectively. Convergence of GOI is omitted since it coincides with that of PGD. 2nd panel from the left shows the distance (in Frobenius norm) between the iterates of PGD $(\X_t)$ and those of GOI $(\Y_t)$. The rightmost panel shows the approximation error (in log scale) vs. time, when only the time to compute matrix factorizations is taken into account.} \label{fig:model2}
\end{figure}


In a second experiment we fix for both models $p=0.1$ and vary the dimension $n$ (while keeping $k,m$ fixed as before). The results are given in Table \ref{table:2}. In particular, we see that the eigen-gap $\delta$ does not change substantially with the dimension.

\begin{table*}\renewcommand{\arraystretch}{1.3}
{\footnotesize
\begin{center}
\begin{tabular}{ | P{8em} | P{5em} | P{5em}| P{4em} | P{4em}  |} 
  \hline
  dim. ($n$) &  100 & 200 & 300 & 400  \\ 
  \hline
\multicolumn{5}{|c|}{$\downarrow$ Model 1: spiked covariance $\downarrow$ }\\ \hline
  Eigen-gap ($\delta$) &2.87& 3.02 & 2.96 & 3.04 \\
  \hline
  $\|\X^{*}-\P\|_{F}$ & $0.0071$ & $0.005$ & $0.0043$ & $0.0035$  \\ 
  \hline
  $\|\X_{PCA}-\P\|_{F}$ & 0.068 & 0.049& 0.043&  0.036  \\ 
  \hline
  \multicolumn{5}{|c|}{$\downarrow$ Model 2: sparsely corrupted entries $\downarrow$ }\\ \hline
  Eigen-gap ($\delta$) & 5.49 & 5.902 & 6.06 & 6.1 \\
  \hline
  $\|\X^{*}-\P\|_{F}$ & $0.067$ & $0.0617$ & $0.058$ & $0.055$  \\ 
  \hline
  $\|\X_{PCA}-\P\|_{F}$ & 0.199 & 0.208& 0.206&  0.202  \\ 
  \hline
\end{tabular}
\caption{Recovery and eigen-gap results for the spiked covariance model and corrupted entries model with varying dimension. Each result is the average of 20 i.i.d. experiments.}\label{table:2}
\end{center}
}
\vskip -0.2in
\end{table*}\renewcommand{\arraystretch}{1} 

We turn to demonstrate the empirical performance of the projected gradient method w.r.t. to the nonconvex set $\mP_{n,k}$ (PGD), as given in Dynamics \eqref{eq:nonconvProjGrad}, and and gradient orthogonal iteration (GOI), as given in Dynamics \eqref{eq:QRgrad}, for the two models discussed above. We fix $n=100$ and $p=0.1$ (keeping all other parameters unchanged). For both methods we  use the fixed step-size $\eta = 1/\lambda$, where $\lambda = \lambda_1(\sum_{i=1}^m\q_i\q_i^{\top})$, i.e., the largest eigenvalue of the (unnormalized) empirical covariance. We note that smaller values of $\eta$ seem too conservative in practice from our experimentations. We initialize both methods with the $k$-PCA projection matrix $\X_{PCA}$.  We examine the convergence of both methods in terms of recovery error and approximation error (w.r.t. the objective function). Additionally, to showcase the benefit of avoiding exact SVD computations (as employed by nonconvex PGD) and using only a single QR factorization per iteration (as in GOI), we compare the runtimes of GOI and nonconvex PGD, but we exclude the time it takes to compute the gradient on each iteration and only account for the time it takes to perform either a $\rank-k$ SVD or a QR factorization, where both algorithms were implemented in Python and we have used the built-in functions \textsc{numpy.linalg.eigh} and \textsc{numpy.linalg.qr} to compute thin-SVDs and QR factorizations, respectively. 

The results for the spiked covariance model are given in Figure \ref{fig:model1}, and the results for the sparsely corrupted entries model, which are very similar, are given in Figure \ref{fig:model2}. It can be seen that indeed the distance between the iterates of the two methods decays very quickly and so the graphs of the recovery and approximation errors of both methods coincide. We in particular see that both methods indeed demonstrate a linear convergence rate (w.r.t. the objective value). We also see the significant savings in runtime when replacing a thin-SVD computation (as used by nonconvex PGD) with only a single QR factorization (as used by GOI).

Moreover, in order to verify the convergence of nonconvex PGD to the global optimal solution (and not just a stationary point of the nonconvex Problem \eqref{eq:optProbNonconv}), we verify using the procedure suggested in Remark \ref{remark:rank}, that on each iteration $t$, the projection step onto the Fantope $\mF_{n,k}$ is also of rank $k$, i.e., identical to the projection onto $\mP_{n,k}$. This means that the iterates of PGD w.r.t. the nonconvex Problem \eqref{eq:optProbNonconv} and the iterates of PGD w.r.t. the convex relaxation \eqref{eq:optProbConv}, coincide. Indeed, for all random instances generated and for all iterations executed, we observe that the projection onto the Fantope is of rank $k$. This suggests that the nonconvex PGD (and consequently also GOI) in particular converges to the global optimal solution of the convex relaxation \eqref{eq:optProbConv}.

\section{Proof of Theorem \ref{thm:strictComp}}

\begin{proof}
First, observe that for any dual solution $(\Z_{1}^{*}, \Z_{2}^{*}, s^{*})$, it holds that $\Z_{1}^{*}$ and $\Z_{2}^{*}$ are orthogonal to each other. This is true since, denoting by $\X^*$ the corresponding primal solution, we have that,
\begin{align*}
\langle \Z_{1}^{*}, \Z_{2}^{*} \rangle &= \trace(\Z_{1}^{*}\Z_{2}^{*}) = \trace(\Z_{1}^{*}(\X^{*}+(\I-\X^{*}))\Z_{2}^{*}) \\
&=\trace(\Z_{1}^{*}\X^{*}\Z_{2}^{*}) + \trace(\Z_{1}^{*}(\I-\X^{*})\Z_{2}^{*}) = 0,
\end{align*}
where the last equality follows from the complementarity conditions $\Z_{1}^{*}\X^{*}=\mathbf{0}$ and $(\I-\X^{*})\Z_{2}^{*}=\mathbf{0}$. 

For a given dual solution $(\Z_{1}^{*},\Z_{2}^{*},s^{*})$, let us denote $r_{1}=\rank(\Z_{1}^{*})$ and $r_{2}=\rank(\Z_{2}^{*})$.

Let us write the eigen decompositions $\Z_{1}^{*}=\sum_{i=1}^{r_{1}} \rho_{i}\u_{i}\u_{i}^{T}$ and $\Z_{2}^{*}=\sum_{j=1}^{r_{2}} \mu_{j}\v_{j}\v_{j}^{T}$.

From the orthogonality of $\Z_{1}^{*}$ and $\Z_{2}^{*}$ established above, we get an orthonormal set of vectors $\{\u_{1},...,\u_{r_{1}}, \v_{1},...,\v_{r_{2}}\}$ and we can complete it to an orthonormal basis of $\mathbb{R}^{n}$: 
\begin{align*}
B=\{\u_{1},...,\u_{r_{1}}, \v_{1},...,\v_{r_{2}}, \w_{1},...,\w_{n-r_{1}-r_{2}}\},
\end{align*}
where $\Z_{1}^{*}\w_{i}=\mathbf{0}$ and $\Z_{2}^{*}\w_{i}=\mathbf{0}$ for any $i\in\{1,\dots,n-r_1-r_2\}$.

From the KKT conditions for Problem \eqref{eq:optProbConv}, we have that $\nabla f(\X^{*})= \Z_{1}^{*}-\Z_{2}^{*}+s^{*}\I$, and so it follows that any $\v \in B$ is an eigenvector of $\nabla f(\X^{*})$. Thus, we can write the eigenvalues of $\nabla f(\X^{*})$ in non-increasing order from left to right as:
\begin{align}\label{eq:strictCompProof:1}
\rho_{1}+s^{*},...,\rho_{r_{1}}+s^{*},\underbrace{s^{*},...,s^{*}}_{n-r_{1}-r_{2}~ \textrm{times}},s^{*}-\mu_{r_2},...,s^{*}-\mu_{1}.
\end{align}

For the first direction of the theorem, let us assume $\X^*$ satisfies strict complementarity, so for some dual solution $(\Z_{1}^{*}, \Z_{2}^{*}, s^{*})$ we have that $r_{1}=n-k$ or $r_{2}=k$.

Now, if $r_{1}=n-k$, using \eqref{eq:strictCompProof:1} we get that $\lambda_{n-k}(\nabla f(\X^{*}))=\rho_{n-k}+s^{*}$ and
\newline $\lambda_{n-k+1}(\nabla f(\X^{*})) \leq s^{*}$, and so there is a gap of
\begin{align*}
\lambda_{n-k}(\nabla f(\X^{*}))-\lambda_{n-k+1}(\nabla f(\X^{*})) \geq \rho_{n-k}+s^{*} - s^{*} = \rho_{n-k}>0.
\end{align*}
Otherwise, if $r_{2}=k$, then using \eqref{eq:strictCompProof:1} we have that $\lambda_{n-k+1}(\nabla f(\X^{*})) = s^{*}-\mu_{k}$ and
\newline $\lambda_{n-k}(\nabla f(\X^{*})) >s^{*}$, and so there is a gap of 
\begin{align*}
\lambda_{n-k}(\nabla f(\X^{*}))-\lambda_{n-k+1}(\nabla f(\X^{*})) \geq s^{*} - (s^{*} - \mu_{k}) =\mu_{k} >0.
\end{align*}
In both cases we get a positive eigen-gap, which proves the first direction of the theorem.

For the reversed direction, let us assume that $\X^*$ satisfies the eigen-gap assumption, and recall that according to Theorem \ref{thm:optSol} it follows that $\rank(\X^*) = k$.  Suppose by way of contradiction that there exists a dual solution  $(\Z_1^*,\Z_2^*,s^*)$ for which $r_{1} < n-k$ and $r_{2} < k$. In this case we have from \eqref{eq:strictCompProof:1} that,
\begin{align*}
\lambda_{n-k}(\nabla f(\X^{*})) = \lambda_{n-k+1}(\nabla f(\X^{*})) = s^{*},
\end{align*}
which contradicts the existence of an eigen-gap and  so, it must be that $r_{1}=n-k$ or $r_{2}=k$.
\end{proof}

\section{Details Missing from Section \ref{sec:analPrelim} and Proof of Theorem \ref{thm:optSol}}

\subsection{Proof of Lemma \ref{lem:proj}}
\begin{proof}
The first part of the lemma is a known fact, see for instance \cite{vu2013fantope}.
For the second part, let us prove that if $\sum_{i=1}^r \min(\gamma_i-\gamma_{r+1},1)\geq k$, then $\theta$ must satisfy $\theta\geq\gamma_{r+1}$. Assume by way contradiction that $\theta<\gamma_{r+1}$. Then, 
\begin{align*}
k=\sum_{i=1}^n \min(\max(\gamma_i-\theta,0),1)>\sum_{i=1}^n \min(\max(\gamma_i-\gamma_{r+1},0),1)=\sum_{i=1}^r \min(\gamma_i-\gamma_{r+1},1),
\end{align*}
which is a contradiction, and so it must be that $\theta\geq\gamma_{r+1}$, and in that case the projection sets all the bottom $n-r$ components of the eigen-decomposition of $\X$ to zero. Hence, $\rank(\Pi_{\mF_{n,k}}[\X])\leq r$.
The reversed direction holds from similar reasoning.
\end{proof}

\subsection{Proof of Theorem \ref{thm:optSol}}
Before we prove Theorem \ref{thm:optSol} we need 
the following lemma which is central to our analysis and connects between an optimal solution and the eigen-decomposition of its corresponding gradient.
\begin{lemma}\label{lem:opt}
Let $\X^{*}\in \mF_{n,k}$ be an optimal solution to Problem \eqref{eq:optProbConv} and write the eigen-
\newline decomposition of $-\nabla f(\X^{*})$ as $-\nabla f(\X^{*})=\sum_{i=1}^{n} \mu_{i} \u_{i}\u_{i}^{\top}$.
Let $r$ be the smallest integer such that $r\geq k$ and $\mu_{k}-\mu_{r+1}>0$. Then,  for all $n \geq i \geq r+1$, $\X^{*}$ is orthogonal to $\u_i\u_i^{\top}$, and $\rank(\X^{*})\leq r$. 
\newline In particular, if $r=k$, then $\X^{*}\in\mP_{n,k}$ is the unique projection matrix onto the span of the $k$ leading eigenvectors of  $-\nabla f(\X^{*})$.
\end{lemma}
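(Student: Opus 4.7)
My plan is to exploit the fact that, since $\X^*$ is optimal for the \emph{convex} problem \eqref{eq:optProbConv}, the first-order optimality condition gives
$\langle \nabla f(\X^*), \X - \X^*\rangle \geq 0$ for every $\X\in\mF_{n,k}$; equivalently, $\X^*$ is a maximizer of the linear functional $\X \mapsto \langle -\nabla f(\X^*),\X\rangle$ over the Fantope $\mF_{n,k}$. By Ky Fan's maximum principle, this maximum equals $\sum_{i=1}^k \mu_i$.

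Next, I would substitute the eigen-decomposition $-\nabla f(\X^*) = \sum_{i=1}^n \mu_i \u_i \u_i^\top$ and set $p_i := \u_i^\top \X^* \u_i$. The constraints $0 \preceq \X^* \preceq \I$ and $\trace(\X^*)=k$ immediately yield $p_i\in[0,1]$ and $\sum_{i=1}^n p_i = k$, while the optimality/Ky-Fan identity gives
\begin{equation*}
\sum_{i=1}^n \mu_i p_i \;=\; \sum_{i=1}^k \mu_i.
\end{equation*}
The core combinatorial step is then to argue that any feasible $(p_i)$ attaining this maximum must have $p_i = 0$ for every $i \geq r+1$. This follows from the definition of $r$: the minimality of $r$ subject to $r\geq k$ and $\mu_k-\mu_{r+1}>0$ forces $\mu_k = \mu_{k+1} = \cdots = \mu_r$, while $\mu_{r+1} < \mu_k$. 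If any $p_j$ with $j\geq r+1$ were positive, one could transfer mass from index $j$ to an index $i\leq r$ with $p_i<1$ (such an $i$ must exist since $\sum p_i=k$ and $p_j>0$) and strictly increase $\sum_i\mu_i p_i$, contradicting optimality. From $\u_i^\top \X^* \u_i = 0$ combined with $\X^*\succeq 0$, I get $\X^*\u_i = 0$, i.e., $\langle \X^*,\u_i\u_i^\top\rangle = 0$ for all $i\geq r+1$, which also yields $\rank(\X^*)\leq r$ since $\text{range}(\X^*)\subseteq \text{span}(\u_1,\dots,\u_r)$.

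Finally, for the ``in particular'' clause ($r=k$, i.e., $\mu_k>\mu_{k+1}$), the previous step shows that $\text{range}(\X^*)\subseteq \text{span}(\u_1,\dots,\u_k)$, a $k$-dimensional space. Together with $0\preceq \X^*\preceq \I$ and $\trace(\X^*)=k$, this forces every eigenvalue of $\X^*$ restricted to that subspace to equal $1$, so $\X^* = \sum_{i=1}^k \u_i \u_i^\top \in \mP_{n,k}$; uniqueness of this projection follows from the strict gap $\mu_k > \mu_{k+1}$, which makes $\text{span}(\u_1,\dots,\u_k)$ uniquely determined.

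I expect the only mildly delicate step to be the ``transfer argument'' showing $p_i=0$ for $i\geq r+1$; everything else is a straightforward combination of first-order optimality, Ky Fan, and positive-semidefiniteness.
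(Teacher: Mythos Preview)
Your proposal is correct and follows essentially the same approach as the paper: both arguments use first-order optimality of $\X^*$ over $\mF_{n,k}$ together with the eigenstructure of $-\nabla f(\X^*)$ to force $\u_i^\top\X^*\u_i=0$ for $i\geq r+1$. The only cosmetic difference is that the paper phrases the key step as a direct contradiction (comparing $\X^*$ to $\P^*=\sum_{i=1}^k\u_i\u_i^\top$ and showing $\langle \P^*-\X^*,\nabla f(\X^*)\rangle<0$), whereas you first invoke Ky~Fan to pin the value at $\sum_{i=1}^k\mu_i$ and then run a mass-transfer argument --- these are two presentations of the same inequality chain.
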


\begin{proof}
Assume by way of contradiction that $\X^{*}$ is not orthogonal $\u_{r+1}\u_{r+1}^{\top},\dots,\u_n\u_n^{\top}$. In this case, $\sum_{i=r+1}^{n} \u_{i}^{\top} \X^{*} \u_{i} >0$, and we can write,
\begin{align*}
 \langle \X^{*}, -\nabla f(\X^{*}) \rangle &= \sum_{i=1}^{r} \mu_i \u_i^{\top} \X^* \u_i +  \sum_{i=r+1}^{n} \mu_i \u_i^{\top} \X^* \u_i \nonumber \\
 & \underset{(a)}{<}  \sum_{i=1}^{r} \mu_{i}\u_i^{\top} \X^* \u_i +  \mu_{r}\sum_{i=r+1}^{n} \u_i^{\top} \X^* \u_i \nonumber\\
 & \underset{(b)}{=} \sum_{i=1}^{k-1}\mu_i\u_i^{\top}\X^*\u_i + \mu_k\sum_{i=k}^n\u_i^{\top}\X^*\u_i \nonumber \\
 & \underset{(c)}{=} \sum_{i=1}^{k-1}\mu_i\u_i^{\top}\X^*\u_i + \mu_k\left({k - \sum_{i=1}^{k-1}\u_i^{\top}\X^*\u_i}\right),
\end{align*}
where both (a) and (b) follow from the definition of $r$, and (c) follows since $\sum_{i=1}^n\u_i^{\top}\X^*\u_i = \trace(\X^*\sum_{i=1}^n\u_i\u_i^*) = \trace(\X^*\I)=k$.

Let us denote the projection matrix onto the span of the top $k$ eigenvectors of $-\nabla{}f(\X^*)$ by $\P^* = \sum_{i=1}^k\u_i\u_i^{\top}$, and note that $\langle{\P^*,-\nabla{}f(\X^*)}\rangle = \sum_{i=1}^k\mu_i$. It follows that
\begin{align*}
\langle{\P^*-\X^*,\nabla{}f(\X^*)}\rangle &= \langle{\X^*-\P^*,-\nabla{}f(\X^*)}\rangle \\
&<  \sum_{i=1}^{k-1}\mu_i\u_i^{\top}\X^*\u_i + \mu_k\left({k - \sum_{i=1}^{k-1}\u_i^{\top}\X^*\u_i}\right) - \sum_{i=1}^k\mu_i \\
& = \sum_{i=1}^{k-1}\mu_i\left({\u_i^{\top}\X^*\u_i -1}\right)+ \mu_k\sum_{i=1}^{k-1}\left({1 - \u_i^{\top}\X^*\u_i}\right) \\
& = \sum_{i=1}^{k-1}\left(1 - {\u_i^{\top}\X^*\u_i}\right)(\mu_k - \mu_i) \leq 0, 
\end{align*}
where the last inequality follows since for all $i$, $\u_i^{\top}\X^*\u_i\in[0,1]$.

Thus, we have that $\X^*$ violates the first-order optimality condition which contradicts that assumption that it is an optimal solution, and thus we have that $\X^*$ must indeed be orthogonal to $\u_{r+1}\u_{r+1}^{\top},\dots,\u_n\u_n^{\top}$. 

An immediate consequence is that  the eigenvectors of $\X^{*}$ which correspond to non-zero eigenvalues must lie in  $\textrm{span}\{\u_{1},...,\u_{r}\}$ and thus, it must be that $\rank(\X^{*})\leq r$. 

For the final part of the lemma, in case $r=k$, since for all $\X\in\mF_{n,k}$, $\rank(\X) \geq k$, we have that $\rank(\X^*)=k$. In particular,  $\X^*$ is a projection matrix, i.e., $\X\in\mP_{n,k}$. By the orthogonality result above, it follows that the eigenvectors of $\X^*$ lie in $\textrm{span}\{\u_1,\dots,\u_k\}$, which means that $\X^*$ is indeed the projection matrix onto  $\textrm{span}\{\u_1,\dots,\u_k\}$, as stated in the lemma. Note that when $r=k$, this projection matrix is indeed unique (i.e., the subspace spanned by the top $k$ eigenvectors of $-\nabla{}f(\X^*)$ is unique).
\end{proof} 

We now prove Theorem \ref{thm:optSol}.

\begin{proof}[Proof of Theorem \ref{thm:optSol}]
Let $\X^*$ be an optimal solution to the convex relaxation \eqref{eq:optProbConv} which satisfies Assumption \ref{ass:gap} with some $\delta >0$. It follows directly from Lemma \ref{lem:opt} that $\rank(\X^*) = k$. From Lemma \ref{lem:opt} it further follows that $\X^*$ is the unique projection matrix onto the span of top $k$ eigenvectors of $-\nabla{}f(\X^*)$, i.e., it is the unique matrix in $\X\in\mP_{n,k}$ such that $\langle{\X,-\nabla{}f(\X^*)}\rangle = \sum_{i=1}^k\mu_i$, where we write the eigen-decomposition of $-\nabla{}f(\X^*)$ as $-\nabla{}f(\X^*) = \sum_{i=1}^n\mu_i\u_i\u_i^{\top}$. From the von Neumann trace inequality it follows that for any matrix $\X\in\mP_{n,k}$ it holds that $\langle{\X,-\nabla{}f(\X^*)}\rangle \leq \sum_{i=1}^n\lambda_i(\X)\mu_i = \sum_{i=1}^k\lambda_i(\X)\mu_i = \sum_{i=1}^k\mu_i$. Thus, we have that
for all $\X\in\mP_{n,k}\setminus\{\X^*\}:$  $\langle{\X-\X^*,\nabla{}f(\X^*)}\rangle = \langle{\X^*-\X,-\nabla{}f(\X^*)}\rangle > 0$.
Since $\mF_{n,k} = \conv\{\mP_{n,k}\}$, this further implies that
for all $\X\in\mF_{n,k}\setminus\{\X^*\}$: $\langle{\X-\X^*,\nabla{}f(\X^*)}\rangle > 0$.
Since $f(\cdot)$ is convex, it further holds that
for all $\X\in\mF_{n,k}\setminus\{\X^*\}$: $f(\X^*)- f(\X) \leq \langle{\X^*-\X,\nabla{}f(\X^*)}\rangle < 0$,
and thus, we conclude that $\X^*$ is indeed the unique optimal solution to Problem \eqref{eq:optProbConv}, which also implies that it is the unique optimal solution to Problem \eqref{eq:optProbNonconv}.
\end{proof}

\subsection{Proof of Lemma \ref{lem:quad}} 
\begin{proof}

Let us write the eigen-decomposition of the gradient $\nabla f(\X^{*})$ as  $\nabla f(\X^{*})=\sum_{i=1}^{n} \lambda_{i} \u_{i} \u_{i}^{\top}$. For any $\X\in \mF_{n,k}$ it holds that:
\begin{align}\label{quad:eq:1}
   f(\X)-f(\X^{*}) &\underset{(a)}{\geq} \langle \X-\X^{*},\nabla f(\X^{*}) \rangle \underset{(b)}{=} \sum_{i=1}^{n} \lambda_{i}  \u_{i}^{\top} \X \u_{i} - \sum_{i=n-k+1}^{n} \lambda_{i} \nonumber \\
  &\underset{(c)}{\geq} (\lambda_{n-k+1} +\delta)  \sum_{i=1}^{n-k}  \u_{i}^{\top} \X \u_{i} +  \sum_{i=n-k+1}^{n} \lambda_{i}  \u_{i}^{\top} \X \u_{i} -  \sum_{i=n-k+1}^{n} \lambda_{i}  \nonumber \\
  &=(\lambda_{n-k+1} +\delta)  \sum_{i=1}^{n-k}  \u_{i}^{\top} \X \u_{i} -  \sum_{i=n-k+1}^{n} \lambda_{i}  (1-\u_{i}^{\top} \X \u_{i})  \nonumber \\
  &\underset{(d)}{\geq}  (\lambda_{n-k+1} +\delta)  \sum_{i=1}^{n-k}  \u_{i}^{\top} \X \u_{i} -  \lambda_{n-k+1}\sum_{i=n-k+1}^{n} (1-\u_{i}^{\top} \X \u_{i})  \nonumber \\
  &= \lambda_{n-k+1} \sum_{i=1}^{n} \u_{i}^{\top}\X\u_{i} -k\lambda_{n-k+1} +\delta\sum_{i=1}^{n-k} \u_{i}^{\top}\X\u_{i},
\end{align}
where (a) follows from the convexity of $f(\X)$, (b) follows  since according Lemma \ref{lem:opt} $\X^*=\sum_{i=n-k+1}^n\u_i\u_i^{\top}$ and so, $\langle{\X^*,\nabla{}f(\X^*)}\rangle = \sum_{i=n-k+1}^n\lambda_i$, (c) follows from Assumption \ref{ass:gap}, and (d) follows since $\X\preceq\I$, which implies that $\u_{i}^{\top}\X\u_{i}\leq \u_{i}^{\top}\u_{i}=1$.

Using $\sum_{i=1}^{n} \u_{i}^{\top} \X \u_{i} =\trace(\X\sum_{i=1}^n\u_i\u_i^{\top})=\trace(\X\I)=k$ and Eq. \eqref{quad:eq:1}, we have,
\begin{align} \label{quad:eq:2}
 f(\X)-f(\X^{*})\geq\delta\sum_{i=1}^{n-k} \u_{i}^{\top} \X \u_{i} = \delta \left({k-\sum_{i=n-k+1}^{n} \u_{i}^{\top} \X \u_{i}}\right).
\end{align}
Also, using again the fact that $\X^* = \sum_{i=n-k+1}^n\u_i\u_i^{\top}$, we have that, 
\begin{align} \label{quad:eq:3}
\|\X-\X^{*}\|_{F}^{2} = \|\X\|_{F}^{2}+\|\X^{*}\|_{F}^{2}-2\sum_{i=n-k+1}^{n} \u_{i}^{\top} \X \u_{i} \leq 2\left({k-\sum_{i=n-k+1}^{n} \u_{i}^{\top} \X \u_{i} }\right),
\end{align}
where the last inequality follows since for any $\X\in \mF_{n,k}$ it holds that $\|\X\|_{F}^{2}=\sum_{i=1}^{n} \lambda_{i}^{2}(\X) \leq \sum_{i=1}^{n} \lambda_{i}(\X)  =k$.
Combining  Eq. \eqref{quad:eq:2} and  \eqref{quad:eq:3} we finally have that,
\begin{align*} 
f(\X)-f(\X^{*})\geq \dfrac{\delta}{2} \| \X-\X^{*}\| _{F}^{2}.
\end{align*}
\end{proof}

\section{Projected Gradient Descent Analysis}
In this section we turn to analyze the local convergence of the projected gradient method w.r.t. the sets $\mP_{n,k}$ and $\mF_{n,k}$, and to prove Theorems \ref{pgd_theorem} and \ref{thm:pgd_general}.

We first provide the proof of Lemma  \ref{lem:rad} which is fairly simple, and then prove a more general version of the lemma, which in particular allows to relax Assumption \ref{ass:gap}. 

\begin{proof}[Proof of Lemma \ref{lem:rad}]
Denote $\Y^*=\X^*-\eta\nabla f(\X^*)$ and denote the eigenvalues of $\Y^*$ in non-increasing order $\sigma_i=\lambda_{i}(\Y^{*}), i=1,\dots,n$. Denote also  $\Y=\X-\eta\nabla f(\X)$ with its eigenvalues $\gamma_i = \lambda_{i}(\Y), i=1,\dots,n$. Let us write the eigen-decomposition of $-\nabla{}f(\X^*)$ as $-\nabla{}f(\X^*) = \sum_{i=1}^n\mu_i\u_i\u_i^{\top}$. From Lemma \ref{lem:opt}, we have that under Assumption \ref{ass:gap}, it holds that $\X^* = \sum_{i=1}^k\u_i\u_i^{\top}$. Thus, we can deduce that
\begin{align}\label{rad:eq:1}
\sigma_i=
\begin{cases}
1+\eta\mu_i  &\text{if $i\in\{1,...,k\};$}\\
\eta\mu_i  &\text{else.}
\end{cases}
\end{align}
From Lemma \ref{lem:proj} we have that $\rank(\Pi_{\mF_{n,k}}(\Y))=k$ if and only if $\sum_{i=1}^k \min(\gamma_i-\gamma_{k+1},1)\geq k$. Thus, a sufficient condition so that  $\rank(\Pi_{\mF_{n,k}}(\Y))=k$ is,
\begin{align}\label{rad:eq:2}
\gamma_k-\gamma_{k+1}\geq 1.
\end{align}
By  Weyl's inequality for the eigenvalues and Eq. \eqref{rad:eq:1} we have,
\begin{align*}
  \gamma_k-\gamma_{k+1}&=(\sigma_k-\sigma_{k+1}) + (\gamma_k-\sigma_k)+(\sigma_{k+1}-\gamma_{k+1})\\
  &\geq  1+\eta(\mu_k-\mu_{k+1})-2\|\Y-\Y^*\|_F   \\
  &=1+\eta(\mu_k-\mu_{k+1})-2\|\X-\X^*-\eta\nabla f(\X)+\eta\nabla f(\X^*)\|_F  \\
  &\geq 1+\eta(\mu_k-\mu_{k+1})-2(1+\eta\beta)\|\X-\X^*\|_F.
\end{align*}
Thus, we see that a sufficient condition so that \eqref{rad:eq:2} holds, is that $\X$ satisfies
\begin{align*}
\|\X-\X^*\|_F \leq \dfrac{\eta\delta}{2(1+\eta\beta)} \leq \dfrac{\eta(\mu_k-\mu_{k+1})}{2(1+\eta\beta)},
\end{align*}
and so the lemma follows.
\end{proof}

The following lemma generalizes Lemma \ref{lem:rad} and offers a natural trade-off between the rank of the projected gradient mapping and the size of the ball around an optimal solution $\X^*$ in which it is guaranteed to be upper-bounded. 

\begin{lemma} \label{lem:genrad}
Let $\X^{*}\in \mF_{n,k}$ be an optimal solution to Problem \eqref{eq:optProbConv}, and let $\mu_1 \geq \mu_2 \geq \dots \mu_n$ denote the eigenvalues of  $-\nabla f(\X^{*})$.
Let $r$ be the smallest integer such that $r\geq k$ and $\mu_{k}>\mu_{r+1}$. Fix some $\eta >0$. For any $\X\in \mF_{n,k}$ which satisfies
\begin{align}\label{genrad:eq:40}
\|\X-\X^*\|_F\leq\dfrac{\eta(\mu_k-\mu_{r+1})}{2(1+\eta\beta)},
\end{align}
it holds that $\rank(\Pi_{\mF_k}(\X-\eta\nabla f(\X)))\leq r$.
\newline More generally, for any $r'\in\{r,r+1,...,n-1\}$ and for any $\eta>0$, if $\X\in \mF_{n,k}$ satisfies
\begin{align}\label{genrad:eq:41}
\|\X-\X^*\|_F\leq\dfrac{\eta(\mu_k-\mu_{r'+1})}{2(1+\eta\beta)},
\end{align}
then $\rank(\Pi_{\mF_k}(\X-\eta\nabla f(\X)))\leq r'$.
\end{lemma}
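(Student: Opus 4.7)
My plan is to verify the rank characterization of Lemma \ref{lem:proj}: letting $\gamma_i$ denote the eigenvalues of $\Y=\X-\eta\nabla f(\X)$ in decreasing order, it suffices to show that $\sum_{i=1}^{r'}\min(\gamma_i-\gamma_{r'+1},1)\geq k$. This generalizes Lemma \ref{lem:rad}, where the simpler pair-wise gap $\gamma_k-\gamma_{k+1}\geq 1$ sufficed because Assumption \ref{ass:gap} forced $\X^*$ to be an exact rank-$k$ projection; here $\X^*$ may have rank strictly larger than $k$, and one must exploit the top $r'$ eigenvalues collectively.

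The first step is a structural lower bound on the eigenvalues $\sigma_i=\lambda_i(\Y^*)$ of $\Y^*=\X^*-\eta\nabla f(\X^*)$: I would show $\sigma_i\geq \lambda_i(\X^*)+\eta\mu_k$ for $i\leq r$ and $\sigma_i=\eta\mu_i$ for $i>r$. The three ingredients are: (i) the KKT complementarity conditions $\Z_1^*\X^*=0$ and $\Z_2^*(\I-\X^*)=0$ imply that $\X^*$ and $\nabla f(\X^*)$ commute; (ii) by Lemma \ref{lem:opt}, every nonzero eigenvector of $\X^*$ lies in $\mathrm{span}\{\u_1,\dots,\u_r\}$, so this subspace is invariant under $\Y^*$; and (iii) because $\mu_r=\mu_k>\mu_{r+1}$, the top $r$ eigenvalues of $\Y^*$ are exactly those of its restriction to this subspace, which equals the restriction of $\X^*$ plus $\eta\sum_{i=1}^r\mu_i\u_i\u_i^{\top}$. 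Weyl's inequality $\lambda_i(A+B)\geq \lambda_i(A)+\lambda_{\min}(B)$ applied on the $r$-dimensional restriction then yields the bound (using that the $i$th eigenvalue of the restricted $\X^*$ equals $\lambda_i(\X^*)$ for $i\leq r$).

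Combining this with Weyl applied to $\Y$ vs.\ $\Y^*$, namely $|\gamma_i-\sigma_i|\leq\|\Y-\Y^*\|_2\leq(1+\eta\beta)\|\X-\X^*\|_F=:\epsilon$, and writing $c:=\eta(\mu_k-\mu_{r'+1})$, gives for each $i\leq r$:
\[
\gamma_i-\gamma_{r'+1}\;\geq\; \sigma_i-\sigma_{r'+1}-2\epsilon \;\geq\; \lambda_i(\X^*)+c-2\epsilon.
\]
Under the hypothesis $\|\X-\X^*\|_F\leq c/(2(1+\eta\beta))$ we have $c-2\epsilon\geq 0$, and a short case analysis (on whether $\lambda_i(\X^*)+c-2\epsilon$ exceeds $1$) shows $\min(\lambda_i(\X^*)+c-2\epsilon,1)\geq \lambda_i(\X^*)$, since $\lambda_i(\X^*)\in[0,1]$. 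Summing over $i\leq r$ and using $\rank(\X^*)\leq r$ with $\trace(\X^*)=k$ gives $\sum_{i=1}^r\min(\gamma_i-\gamma_{r'+1},1)\geq k$; the remaining terms $i=r+1,\dots,r'$ contribute nonnegatively because $\gamma_i\geq \gamma_{r'+1}$ by monotonicity, completing the verification via Lemma \ref{lem:proj} and yielding $\rank(\Pi_{\mF_{n,k}}[\Y])\leq r'$.

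The main obstacle is the structural bound on $\sigma_i$. Under Assumption \ref{ass:gap} the eigenvalues of $\Y^*$ can be read off directly from $\X^*=\sum_{i=1}^k\u_i\u_i^{\top}$, but in this more general setting $\X^*$ need not be a projection at all and its eigenvectors need not align with the $\u_i$. The remedy is to combine the commutativity of $\X^*$ and $\nabla f(\X^*)$ (from KKT complementarity) with the invariant subspace identified in Lemma \ref{lem:opt}, so that the global eigenvalue bound reduces to a Weyl inequality on an $r\times r$ symmetric matrix.
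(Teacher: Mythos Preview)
Your proposal is correct and follows the same overall architecture as the paper: bound the eigenvalues $\sigma_i$ of $\Y^*=\X^*-\eta\nabla f(\X^*)$, perturb to those of $\Y$ via Weyl, then verify the rank criterion of Lemma~\ref{lem:proj}. The only real difference is in how you obtain the structural bound $\sigma_i\geq\lambda_i(\X^*)+\eta\mu_k$ for $i\leq r$ and $\sigma_i=\eta\mu_i$ for $i>r$. The paper derives these directly from the Courant--Fischer min-max and max-min principles, intersecting the test subspace with $\mathrm{span}\{\u_1,\dots,\u_r\}$ (resp.\ its complement). You instead observe that $\mathrm{span}\{\u_1,\dots,\u_r\}$ is $\X^*$-invariant (this already follows from Lemma~\ref{lem:opt}, so your KKT/commutativity detour in (i) is correct but unnecessary), conclude that $\Y^*$ is block-diagonal with respect to the splitting $\mathrm{span}\{\u_1,\dots,\u_r\}\oplus\mathrm{span}\{\u_{r+1},\dots,\u_n\}$, and then apply Weyl on the $r\times r$ block. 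Both arguments yield the identical inequality, and from that point on your summation and case analysis coincide with the paper's.
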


\begin{proof}
From Lemma \ref{lem:opt} we have that $r^{*}:=\rank(\X^{*})\leq r$.
Denote $\Y^*=\X^*-\eta\nabla f(\X^*)$ and denote the eigenvalues of $\Y^*$ as $\sigma_i = \lambda_{i}(\Y^{*}), i=1,\dots,n$. Denote also $\Y=\X-\eta\nabla f(\X)$ with its eigenvalues $\gamma_i = \lambda_{i}(\Y), i=1,\dots,n$.
\newline From the min-max principle for the eigenvalues, letting $\mV\subseteq\reals^n$ denote some subspace of $\reals^n$,  we have that for any $i \in \{1,...,r\}$,
\begin{align}\label{genrad:eq:1}
\sigma_{i}=\min_{\mV:\dim(\mV)=n-i+1}\max_{\v\in \mV:\|\v\|=1}\v^{\top}(\X^*+\eta(-\nabla f(\X^*)))\v.
\end{align}
Let us write the eigen-decomposition of $-\nabla{}f(\X^*)$ as $-\nabla{}f(\X^*) = \sum_{i=1}^n\mu_i\u_i\u_i^{\top}$.  Note that in Eq. \eqref{genrad:eq:1} we minimize over all the subspaces $\mV$ of dimension $n-i+1$, $i \leq r$, and so,
\begin{align}\label{genrad:eq:2}
\mV\cap \textrm{span}\{\u_{1},...,\u_{r}\} \neq \emptyset,
\end{align}
otherwise the direct sum $\mV\oplus \textrm{span}\{\u_{1},...,\u_{r}\}\subseteq \reals^{n}$ would have dimension $n-i+1+r>n$.

Any unit vector $\v\in \mV$ can be written as $\v=a\u+b\w$ such that $\u\in \textrm{span}\{\u_{1},...,\u_{r}\}$, $\Vert{\u}\Vert=1$, $\w\in \textrm{span}\{\u_{r+1},...,\u_{n}\}, \Vert{\w}\Vert=1$, and $a^2+b^2=1$. Thus, for any such unit vector $\v$, using Lemma \ref{lem:opt}, we have that,
\begin{align}\label{genrad:eq:3}
\v^{\top}(\X^*+\eta(-\nabla f(\X^*)))\v = a^{2} \u^{\top}\X^{*}\u + a^{2}\eta \u^{\top}(-\nabla f(\X^{*}))\u + b^{2} \eta \w^{\top}(-\nabla f(\X^{*}))\w.
\end{align}
Note that
\begin{align}\label{genrad:eq:4}
\u^{\top}(-\nabla f(\X^{*}))\u  \geq \mu_{r} > \mu_{r+1} \geq \w^{\top}(-\nabla f(\X^{*}))\w.
\end{align}
This implies that the inner maximum in \eqref{genrad:eq:1} can only be obtained by vectors in $\mV\cap\textrm{span}\{\u_{1},...,\u_{r}\}$ (note \eqref{genrad:eq:2} guarantees such vectors exist). Thus, plugging this observation into  \eqref{genrad:eq:1} we have that for any $i\in\{1,\dots,r\}$, 
\begin{align}\label{genrad:eq:5}
\sigma_{i}&=\min_{\mV:\dim(\mV)=n-i+1}\max_{\v\in \mV\cap\textrm{span}\{\u_{1},...,\u_{r}\},\|\v\|=1}\v^{\top}(\X^*+\eta(-\nabla f(\X^*)))\v   \nonumber \\
&{\geq}\min_{\mV:\dim(\mV)=n-i+1}\max_{\v\in \mV\cap\textrm{span}\{\u_{1},...,\u_{r}\},\|\v\|=1}\v^{\top}\X^{*}\v +\eta\mu_{r} \nonumber \\
&\underset{(a)}{=}\min_{\mV:\dim(\mV)=n-i+1}\max_{\v\in \mV,\|\v\|=1}\v^{\top}\X^{*}\v +\eta\mu_{r}   \underset{(b)}{=} \lambda_{i}(\X^{*})+\eta\mu_{k},
\end{align}
where (a) follows from the orthogonality of $\X^{*}$ to $\u_{r+1}\u_{r+1}^{\top},...,\u_{n}\u_{n}^{\top}$ (see Lemma \ref{lem:opt}), and (b) follows from the min-max principle for the eigenvalues, and since by definition $\mu_r = \mu_k$.

Using the max-min principle for the eigenvalues, we can write for any $j \in \{r+1,...,n\}$,
\begin{align}\label{genrad:eq:6}
\sigma_{j}=\max_{\mV:\dim(\mV)=j}\min_{\v\in \mV:\|\v\|=1}\v^{\top}(\X^*+\eta(-\nabla f(\X^*)))\v.
\end{align}
This time we maximize over all subspaces of dimension $j$, $j\geq r+1$. Thus, it must hold that for each such subspace $\mV$,
\begin{align*}
\mV\cap \textrm{span}\{\u_{r+1},...,\u_{n}\} \neq \emptyset,
\end{align*}
otherwise the direct sum $\mV\oplus \textrm{span}\{\u_{r+1},...,\u_{n}\}\subseteq \reals^{n}$ would have dimension $j+n-r>n$.
Thus, using \eqref{genrad:eq:3} and \eqref{genrad:eq:4}, we have that the inner minimum in \eqref{genrad:eq:6} is obtained by vectors in $\mV\cap\textrm{span}\{\u_{r+1},\dots,\u_n\}$, which is not an empty set. Using this observation we have that for any $j\in\{r+1,\dots,n\}$,
\begin{align}\label{genrad:eq:7}
\sigma_{j}&=\max_{\mV:\dim(\mV)=j}\min_{\v\in \mV\cap\textrm{span} \{\u_{r+1},...,\u_{n}\},\|\v\|=1}\v^{\top}(\X^*+\eta(-\nabla f(\X^*)))\v \nonumber \\
&\underset{(a)}{=}\max_{\mV:\dim(\mV)=j}\min_{\v\in \mV\cap\textrm{span} \{\u_{r+1},...,\u_{n}\},\|\v\|=1}\v^{\top}(\eta(-\nabla f(\X^*)))\v \nonumber \\
&\underset{(b)}=\max_{\mV:\dim(\mV)=j}\min_{\v\in \mV,\|\v\|=1}\v^{\top}(\eta(-\nabla f(\X^*)))\v = \eta\mu_{j},
\end{align}
where (a) follows since $\X^*$ is orthogonal to $\u_{r+1}\u_{r+1}^{\top},\dots,\u_n\u_n^{\top}$ (see Lemma \ref{lem:opt}),
\newline  and (b) follows since by the eigen-decomposition of $-\nabla{}f(\X^*)$, restricting $\v$ to the intersection $\mV\cap\textrm{span} \{\u_{r+1},...,\u_{n}\}$ does not increase the inner minimum.

From Lemma \ref{lem:proj} we have the sufficient condition so that $\rank(\Pi_{\mF_{n,k}}(\Y))\leq r$: 
\begin{align}\label{genrad:eq:7a}
\sum_{i=1}^r \min(\gamma_i-\gamma_{r+1},1)\geq k ~\Longrightarrow~ \rank(\Pi_{\mF_{n,k}}(\Y))\leq r. 
\end{align}

By  Weyl's inequality we have that for any $i\in\{1,\dots,r\}$,
\begin{align}\label{genrad:eq:8}
\gamma_i-\gamma_{r+1}&\geq\sigma_i-\sigma_{r+1}-2\Vert{\Y-\Y^*}\Vert_F \nonumber \\ 
&=\sigma_i-\sigma_{r+1}-2\Vert{\X-\eta\nabla{}f(\X) - \X^*+\eta\nabla{}f(\X^*)}\Vert_F \nonumber \\
&\geq\sigma_i-\sigma_{r+1}-2(1+\eta\beta)\|\X-\X^*\|_F. 
\end{align}

Thus, we have that
\begin{align}\label{genrad:eq:9}
    \sum_{i=1}^{r} \min(\gamma_{i}-\gamma_{r+1},1)&\underset{(a)}{\geq} \sum_{i=1}^{r} \min(\sigma_i-\sigma_{r+1}-2(1+\eta\beta)\|\X-\X^*\|_F, 1)\nonumber \\
    &\underset{(b)}{\geq}\sum_{i=1}^{r} \min(\lambda_{i}(\X^*)+\eta(\mu_i-\mu_{r+1})-2(1+\eta\beta)\|\X-\X^*\|_F,1) \nonumber \\
    &\geq\sum_{i=1}^{r} \min(\lambda_{i}(\X^*)+\eta(\mu_{r}-\mu_{r+1})-2(1+\eta\beta)\|\X-\X^*\|_F, 1),
\end{align}

where (a) follows from \eqref{genrad:eq:8}, and (b) follows from \eqref{genrad:eq:5} and \eqref{genrad:eq:7}.

Thus, we indeed see that if 
\begin{align*}
\|\X-\X^*\|_F \leq \frac{\eta(\mu_{r}-\mu_{r+1})}{2(1+\eta\beta)},
\end{align*}
then $\sum_{i=1}^{r} \min(\gamma_{i}-\gamma_{r+1},1)\geq\sum_{i=1}^{r^*} \lambda_{i}(\X^*) =k$, which by \eqref{genrad:eq:7a} implies that  $\rank(\Pi_{\mF_{n,k}}(\Y))\leq r$, as needed.

For the second part of the lemma let us fix some $r' \in \{r,...,n-1\}$. 
If we have that
\begin{align}\label{genrad:eq:10}
\|\X-\X^*\|_F \leq \frac{\eta(\mu_{r}-\mu_{r'+1})}{2(1+\eta\beta)},
\end{align} 
then similarly to \eqref{genrad:eq:9}, we will have that,
\begin{align*}
    \sum_{i=1}^{r'} \min(\gamma_{i}-\gamma_{r'+1},1)&\geq\sum_{i=1}^{r} \min(\gamma_{i}-\gamma_{r'+1},1)\\
    &\underset{(a)}{\geq}\sum_{i=1}^{r} \min(\sigma_i - \sigma_{r'+1}-2(1+\eta\beta)\|\X-\X^*\|_F,1) \\
    &\underset{(b)}{\geq}\sum_{i=1}^{r} \min(\lambda_{i}(\X^*)+\eta(\mu_i-\mu_{r'+1})-2(1+\eta\beta)\|\X-\X^*\|_F,1) \\
    &{\geq}\sum_{i=1}^{r} \min(\lambda_{i}(\X^*)+\eta(\mu_r-\mu_{r'+1})-2(1+\eta\beta)\|\X-\X^*\|_F,1)\\
    &\underset{(c)}{\geq} \sum_{i=1}^{r} \min(\lambda_{i}(\X^*),1) = \sum_{i=1}^{r^*} \lambda_{i}(\X^*) =k,
\end{align*}
where (a) follows from the same reasoning as \eqref{genrad:eq:8}, (b) follows from \eqref{genrad:eq:5} and \eqref{genrad:eq:7}, and (c) follows from \eqref{genrad:eq:10}.

Thus, from Lemma \ref{lem:proj} we have that \eqref{genrad:eq:10} indeed implies that $\rank(\Pi_{\mF_{n,k}}(\Y)) \leq r'$, which proves the second part of the lemma.
\end{proof}

We can now easily prove Theorems \ref{pgd_theorem} and \ref{thm:pgd_general} by proving the following unifying theorem.
\begin{theorem}
Let  $\{\X_{t}\}_{t\geq 1}$ be a sequence produced by the projected gradient dynamics w.r.t. the convex Problem \eqref{eq:optProbConv}  with a fixed step-size $\eta\in(0,1/\beta]$:
\begin{align*}
\X_{t+1}=\Pi_{\mF_{n,k}}(\X_{t}-\eta\nabla f(\X_t)).
\end{align*}
Fix some optimal solution $\X^*$ and let $\mu_1 \geq \mu_2 \geq ... \mu_n$ denote the eigenvalues of $-\nabla{}f(\X^*)$. Let $r$ be the smallest integer such that $r\geq k$ and $\mu_{r} - \mu_{r+1} > 0$. 
If the initialization $\X_1\in\mF_{n,k}$ satisfies $\|\X_{1}-\X^*\|_F\leq\dfrac{\eta(\mu_k-\mu_{r+1})}{2(1+\eta\beta)}$, then for all $t\geq 1$, $\rank(\X_{t+1}) \leq r$. 

More generally, for every $r'\in\{r,\dots,n\}$, if $\|\X_{1}-\X^*\|_F\leq\dfrac{\eta(\mu_k-\mu_{r'+1})}{2(1+\eta\beta)}$, then for all $t\geq 1$, $\rank(\X_{t+1}) \leq r'$.

In particular, if $r=k$, i.e., Assumption \ref{ass:gap} holds with some $\delta >0$, and  $\|\X_{1}-\X^*\|_F\leq\dfrac{\delta}{4\beta}$, setting $\eta = 1/\beta$ guarantees that for all $t\geq 1$, $\rank(\X_{t+1}) = k$, and the sequence $\{\X_t\}_{t\geq 1}$ converges linearly with rate:
\begin{align*}
\forall t\geq 1: \quad f(\X_t) - f(\X^*) \leq \left({f(\X_1) - f(\X^*)}\right)\exp\left({-\Theta\left({\delta/\beta}\right)(t-1)}\right).
\end{align*}
\end{theorem}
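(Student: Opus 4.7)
The plan is to combine the rank-control provided by Lemma \ref{lem:genrad} with the classical non-expansiveness of the projected gradient mapping to show that, once the initialization lies in the prescribed ball around $\X^*$, the entire trajectory stays in that ball and each iterate obeys the claimed rank bound. For the linear convergence part, Lemma \ref{lem:quad} and the standard PGD descent inequality will then combine in the usual quadratic-growth argument.

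First I would establish that the iterates contract towards $\X^*$ in Frobenius norm. The first-order optimality condition for the convex Problem \eqref{eq:optProbConv} implies that $\X^*$ is a fixed point of the projected gradient map: $\X^* = \Pi_{\mF_{n,k}}(\X^* - \eta \nabla f(\X^*))$. Since $\mF_{n,k}$ is convex, $\Pi_{\mF_{n,k}}$ is non-expansive, so
\begin{align*}
\|\X_{t+1} - \X^*\|_F^2 \leq \|(\X_t - \X^*) - \eta(\nabla f(\X_t) - \nabla f(\X^*))\|_F^2.
\end{align*}
Expanding the right-hand side and invoking co-coercivity of $\nabla f$ (a classical consequence of $\beta$-smoothness and convexity) shows that this is bounded by $\|\X_t - \X^*\|_F^2$ whenever $\eta \in (0, 2/\beta]$, in particular for $\eta \leq 1/\beta$. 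By induction, $\|\X_t - \X^*\|_F \leq \|\X_1 - \X^*\|_F$ for all $t \geq 1$, so the hypothesis on $\|\X_1 - \X^*\|_F$ combined with Lemma \ref{lem:genrad} yields $\rank(\X_{t+1}) \leq r'$ for every $t \geq 1$. When $r' = k$, any matrix in $\mF_{n,k}$ has $\trace(\X) = k$ and eigenvalues in $[0,1]$, so $\rank \leq k$ forces $\X_{t+1} \in \mP_{n,k}$; Remark \ref{remark:rank} moreover shows how to certify and exploit this low rank with a rank-$(r'+1)$ SVD.

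Next I would prove the linear rate under Assumption \ref{ass:gap}. I would derive the standard PGD descent inequality by combining the first-order optimality characterization $\langle \X_t - \eta \nabla f(\X_t) - \X_{t+1},\, \Z - \X_{t+1}\rangle \leq 0$ for all $\Z \in \mF_{n,k}$ (taking $\Z = \X^*$) with $\beta$-smoothness and convexity of $f$, which for $\eta = 1/\beta$ yields
\begin{align*}
2\eta\, (f(\X_{t+1}) - f(\X^*)) \leq \|\X_t - \X^*\|_F^2 - \|\X_{t+1} - \X^*\|_F^2.
\end{align*}
Plugging in the quadratic growth bound $\|\X_{t+1} - \X^*\|_F^2 \leq \tfrac{2}{\delta}(f(\X_{t+1}) - f(\X^*))$ from Lemma \ref{lem:quad} and rearranging gives
\begin{align*}
h_{t+1} \leq \frac{1}{1 + \eta \delta}\, h_t, \qquad h_t := f(\X_t) - f(\X^*).
\end{align*}
Iterating and using $1/(1+x) \leq e^{-x/(1+x)}$ delivers the stated exponential rate $\exp(-\Theta(\delta/\beta)(t-1))$.

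I do not anticipate a real obstacle here: all ingredients — non-expansiveness of PGD, the three-point identity driving the descent lemma, and quadratic growth — are classical, and the one novel input (Lemma \ref{lem:genrad}) has already been proved. The only care needed is to check that the radius constant $\tfrac{\eta(\mu_k - \mu_{r'+1})}{2(1+\eta\beta)}$ appearing in Lemma \ref{lem:genrad} matches the initialization radius specified in the theorem; e.g., for $\eta = 1/\beta$ and $r' = k$ it collapses to $\delta/(4\beta)$, as claimed.
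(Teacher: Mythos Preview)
Your proposal is essentially the paper's own proof: use the classical non-expansiveness of the projected gradient map to keep every iterate inside the initialization ball, invoke Lemma~\ref{lem:genrad} for the rank bounds, and combine quadratic growth (Lemma~\ref{lem:quad}) with the standard PGD descent inequality for the linear rate --- the paper simply cites references for the first and last of these rather than re-deriving them. One small slip worth flagging: from $2\eta\,h_{t+1} \leq \|\X_t-\X^*\|_F^2 - \|\X_{t+1}-\X^*\|_F^2$ together with the QG bound at $\X_{t+1}$ you do not directly obtain the recursion $h_{t+1}\leq(1+\eta\delta)^{-1}h_t$; what those two ingredients actually yield is $(1+\eta\delta)\|\X_{t+1}-\X^*\|_F^2 \leq \|\X_t-\X^*\|_F^2$ (a contraction in squared distance), which then passes to $h_t$ via the same descent inequality and still delivers the claimed $\exp(-\Theta(\delta/\beta)(t-1))$ rate.
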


\begin{proof}
It is a well known fact that the distances of the iterates generated by the projected gradient method with step-size $\eta\in(0,1/\beta]$ to any optimal solution are monotone  non-increasing, i.e., the sequence $\{\Vert{\X_t-\X^*}\Vert_F\}_{t\geq 1}$ is monotone non-increasing, see for instance \cite{bubeck2014convex}. Thus, all results of the theorem regarding the rank of the iterates $\X_t, t\geq 1$, follow immediately from this observation, the initialization conditions listed in the theorem, and Lemma \ref{lem:genrad}.

The linear convergence rate under Assumption \ref{ass:gap} follows from the quadratic growth result --- Lemma \ref{lem:quad}, and the known linear convergence rate of the projected gradient method for smooth functions that satisfy the quadratic growth property, see for instance \cite{necoara2019linear}. 
\end{proof}

\section{Details Missing from Section \ref{sec:GOI}}

We prove two auxiliary lemmas and then prove Theorem \ref{qr_theorem}.

\begin{lemma}\label{lem:aux}
Let $\M\in\mbS^n$, and let $\X\in\mP_{n,k}$ be the projection matrix onto the span of the top $k$ eigenvectors of $\M$. Then, for any $\Z\in\mP_{n,k}$ it holds that,
\begin{align*}
\langle{\X-\Z,\M}\rangle \leq \Vert{\Z-\X}\Vert_F^2\Vert{\M}\Vert_2.
\end{align*}
\end{lemma}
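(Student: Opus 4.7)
The plan is to unfold $\langle{\X-\Z,\M}\rangle$ in the eigenbasis of $\M$ and then match the resulting expression with the Frobenius distance $\Vert{\X-\Z}\Vert_F^2$ via a simple identity for projectors of equal rank.

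First, I would write the eigen-decomposition $\M=\sum_{i=1}^n\mu_i\u_i\u_i^{\top}$ with $\mu_1\geq\mu_2\geq\dots\geq\mu_n$, so that by hypothesis $\X=\sum_{i=1}^k\u_i\u_i^{\top}$. Expanding the inner product in this basis gives
\begin{equation*}
\langle{\X-\Z,\M}\rangle = \sum_{i=1}^k \mu_i\bigl(1-\u_i^{\top}\Z\u_i\bigr) - \sum_{i=k+1}^n \mu_i\,\u_i^{\top}\Z\u_i.
\end{equation*}
Since $\Z\in\mP_{n,k}$ satisfies $0\preceq\Z\preceq\I$, each factor $(1-\u_i^{\top}\Z\u_i)$ and each $\u_i^{\top}\Z\u_i$ is nonnegative, so I can replace every $\mu_i$ above by $|\mu_i|\leq\Vert{\M}\Vert_2$ to upper bound the right-hand side by $\Vert{\M}\Vert_2\bigl[\sum_{i=1}^k(1-\u_i^{\top}\Z\u_i)+\sum_{i=k+1}^n\u_i^{\top}\Z\u_i\bigr]$.

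Next, the trace constraint $\sum_{i=1}^n\u_i^{\top}\Z\u_i=\trace(\Z)=k$ implies that the two sums in the brackets are equal; let $s$ denote their common value. Thus $\langle{\X-\Z,\M}\rangle\leq 2s\,\Vert{\M}\Vert_2$. Finally, the key observation is that for two rank-$k$ projectors
\begin{equation*}
\Vert{\X-\Z}\Vert_F^2 = \Vert{\X}\Vert_F^2+\Vert{\Z}\Vert_F^2-2\langle{\X,\Z}\rangle = 2k-2\sum_{i=1}^k\u_i^{\top}\Z\u_i = 2s,
\end{equation*}
so $2s=\Vert{\X-\Z}\Vert_F^2$ and the lemma follows. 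There is no real obstacle here; the only thing to be careful about is that $\mu_i$ may be negative for some $i\leq k$, which is precisely why passing to absolute values (bounded by $\Vert{\M}\Vert_2$) is needed before invoking nonnegativity of the bracketed quantities.
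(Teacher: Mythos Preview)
Your proof is correct and follows essentially the same route as the paper's: split the inner product into the contribution from the top-$k$ eigenspace and its orthogonal complement, bound each piece by the spectral norm of $\M$ times the corresponding nonnegative trace quantity, and then invoke the identity $\Vert\X-\Z\Vert_F^2 = 2(k-\langle\X,\Z\rangle)$ for rank-$k$ projectors. The paper writes this decomposition abstractly via $\M=\X\M+\X_\perp\M$ and the trace inequality $\trace(\A\M)\leq\trace(\A)\lambda_1(\M)$ for PSD $\A$, whereas you unfold everything explicitly in the eigenbasis; these are cosmetic differences and the arguments coincide.
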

\begin{proof}
Let us denote by $\X_{\perp}$ the projection matrix onto the orthogonal subspace, i.e., $\X_{\perp} = \I - \X$. It holds that,
\begin{align}\label{eq:lem:aux:1}
\langle{\X-\Z,\M}\rangle &= \langle{\X-\Z,\X\M}\rangle
+ \langle{\X-\Z,\X_{\perp}\M}\rangle \nonumber \\
&=\langle{\X-\Z,\X\M}\rangle
- \langle{\Z,\X_{\perp}\M}\rangle.
\end{align}

We consider each of the two terms on the RHS separately.

\begin{align}\label{eq:lem:aux:2}
\langle{\X-\Z,\X\M}\rangle &= \trace((\X-\Z)\X\M) = \trace(\X(\X-\Z)\X\M) \nonumber \\
&\underset{(a)}{\leq} \trace(\X(\X-\Z)\X)\cdot\lambda_{1}(\M) \nonumber\\
&= \langle{\X-\Z,\X}\rangle\cdot\lambda_{1}(\M) = (k-\langle{\Z,\X}\rangle)\cdot\lambda_{1}(\M)  \nonumber\\
&\leq \frac{1}{2}\Vert{\Z-\X}\Vert_F^2\cdot\Vert{\M}\Vert_2,
\end{align}
where  $(a)$ holds since $\X(\X-\Z)\X$ is positive semidefinite.

\begin{align}\label{eq:lem:aux:3}
\langle{\Z,\X_{\perp}\M}\rangle &= \trace(\Z\X_{\perp}\M) = \trace(\X_{\perp}\Z\X_{\perp}\M) \nonumber\\
&\underset{(b)}{\geq} \trace(\X_{\perp}\Z\X_{\perp})\cdot\lambda_n(\M) = \langle{\Z,\X_{\perp}}\rangle\cdot\lambda_n(\M) \nonumber\\
&=\langle{\Z,\I-\X}\rangle\cdot\lambda_n(\M) = (k-\langle{\Z,\X}\rangle)\cdot\lambda_n(\M) \nonumber\\
&\geq -\frac{1}{2}\Vert{\Z-\X^*}\Vert_F^2\cdot\Vert{\M}\Vert_2,
\end{align}
where $(b)$ holds since $\X_{\perp}\Z\X_{\perp}$ is positive semidefinite.

The lemma follows from plugging \eqref{eq:lem:aux:2} and \eqref{eq:lem:aux:3} into \eqref{eq:lem:aux:1}.
\end{proof}

\begin{lemma}\label{lem:distXY}
Fix some $t\geq 1$ and suppose $\eta < 1/\beta$. Then it holds that,
\begin{align*}
\Vert{\X_{t+1}-\Y_t}\Vert_F^2 \leq \frac{\eta}{1-\eta\beta}\left({f(\Y_t) - f(\X_{t+1})}\right).
\end{align*}
\end{lemma}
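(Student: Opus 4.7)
The plan is the standard two-ingredient projected-gradient descent argument adapted to the convex relaxation: combine the first-order optimality condition for the Euclidean projection with the $\beta$-smoothness quadratic upper bound on $f$.

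First I would exploit that $\X_{t+1} = \Pi_{\mF_{n,k}}[\Y_t - \eta\nabla f(\Y_t)]$ is the minimizer of a strongly convex quadratic over the convex set $\mF_{n,k}$. The only feasibility fact needed is that $\Y_t = \Q_t\Q_t^\top \in \mP_{n,k}\subseteq \mF_{n,k}$, so $\Y_t$ is an admissible test point. Consequently the variational inequality characterizing the projection, tested at $\Z = \Y_t$, reads $\langle \X_{t+1} - (\Y_t - \eta\nabla f(\Y_t)),\, \Y_t - \X_{t+1}\rangle \geq 0$. Expanding the inner product and dividing by $\eta > 0$ gives the key one-step inequality $\langle \nabla f(\Y_t),\, \Y_t - \X_{t+1}\rangle \geq \tfrac{1}{\eta}\|\X_{t+1} - \Y_t\|_F^2$.

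Next, applying the $\beta$-smoothness descent inequality at the pair $(\Y_t, \X_{t+1})$ yields $f(\X_{t+1}) \leq f(\Y_t) + \langle \nabla f(\Y_t),\, \X_{t+1} - \Y_t\rangle + \tfrac{\beta}{2}\|\X_{t+1}-\Y_t\|_F^2$. Substituting the previous lower bound on $\langle \nabla f(\Y_t),\, \Y_t - \X_{t+1}\rangle$ and rearranging gives $f(\Y_t) - f(\X_{t+1}) \geq \bigl(\tfrac{1}{\eta}-\tfrac{\beta}{2}\bigr)\|\X_{t+1}-\Y_t\|_F^2 = \tfrac{2-\eta\beta}{2\eta}\|\X_{t+1}-\Y_t\|_F^2$. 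Since $\eta < 1/\beta$, a direct check yields $\tfrac{2-\eta\beta}{2\eta}\geq \tfrac{1-\eta\beta}{\eta}$, which after dividing delivers the announced bound.

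There is essentially no obstacle here; this is a textbook PGD estimate. The only subtlety worth highlighting is that, unlike the preceding lemmas in the section, this one depends neither on the assumption $\X_{t+1}\in\mP_{n,k}$ nor on Assumption \ref{ass:gap}: it uses only convexity and smoothness of $f$, together with the feasibility of $\Y_t$ in $\mF_{n,k}$, and thus holds unconditionally along the trajectory of Algorithm \ref{alg:QR}. This is precisely why it slots in cleanly in the proof of Lemma \ref{lem:errorDec} to upper-bound $\|\X_{t+1}-\Y_t\|_F^2$ by a telescoping quantity in the function value.
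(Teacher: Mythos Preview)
Your proof is correct and follows essentially the same first step as the paper: both derive $\langle\nabla f(\Y_t),\Y_t-\X_{t+1}\rangle \geq \tfrac{1}{\eta}\|\X_{t+1}-\Y_t\|_F^2$ (the paper phrases this via $1/\eta$-strong convexity of the projection functional $\phi(\Z)=\langle\Z,\nabla f(\Y_t)\rangle+\tfrac{1}{2\eta}\|\Z-\Y_t\|_F^2$, which is equivalent to your variational inequality). The second step, however, differs. The paper splits $\nabla f(\Y_t)=\nabla f(\X_{t+1})+(\nabla f(\Y_t)-\nabla f(\X_{t+1}))$, bounds the first piece by convexity of $f$ and the second by the $\beta$-Lipschitz gradient, landing directly on the constant $\tfrac{\eta}{1-\eta\beta}$. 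You instead apply the smoothness descent lemma at $(\Y_t,\X_{t+1})$, obtaining the sharper constant $\tfrac{2\eta}{2-\eta\beta}$, and then relax it to match the stated bound. Your route is slightly more elementary---it uses only $\beta$-smoothness and never invokes convexity of $f$---so your closing remark that the lemma needs only smoothness and feasibility of $\Y_t$ is in fact stronger than what the paper's own proof establishes.
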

\begin{proof}
Define the following function
\begin{align*}
\phi(\Z) := \langle{\Z,\nabla{}f(\Y_t)}\rangle + \frac{1}{2\eta}\Vert{\Z-\Y_t}\Vert_F^2,
\end{align*}
and note that it is $1/\eta$ strongly convex, and that by definition, $\X_{t+1}$ is its minimizer over $\mF_{n,k}$. Thus,
\begin{align*}
\Vert{\X_{t+1}-\Y_t}\Vert_F^2 &\leq 2\eta\left({\phi(\Y_t)-\phi(\X_{t+1})}\right) \\
&= 2\eta\langle{\Y_t-\X_{t+1},\nabla{}f(\Y_t)}\rangle -\Vert{\X_{t+1}-\Y_t}\Vert_F^2.
\end{align*}
Rearranging we get,
\begin{align*}
\Vert{\X_{t+1}-\Y_t}\Vert_F^2 &\leq \eta\langle{\Y_t-\X_{t+1},\nabla{}f(\Y_t)}\rangle \nonumber \\
& =  \eta\langle{\Y_t-\X_{t+1},\nabla{}f(\X_{t+1})}\rangle +  \eta\langle{\Y_t-\X_{t+1},\nabla{}f(\Y_t) - \nabla{}f(\X_{t+1})}\rangle
\nonumber \\
&\underset{(a)}{\leq}  \eta\langle{\Y_t-\X_{t+1},\nabla{}f(\X_{t+1})}\rangle +  \eta\beta\Vert{\X_{t+1}-\Y_{t}}\Vert_F^2 \\
&\underset{(b)}{\leq}  \eta\left({f(\Y_t) - f(\X_{t+1})}\right) +  \eta\beta\Vert{\X_{t+1}-\Y_{t}}\Vert_F^2,
\end{align*}
where (a) follows from the $\beta$-smoothness of $f(\cdot)$, and (b) follows from the convexity of $f(\cdot)$.
Rearranging, we get the lemma.
\end{proof}

\begin{proof}[Proof of Theorem \ref{qr_theorem}]
The theorem follows from Lemma \ref{lem:errorDec}, it only remains to prove that the necessary conditions hold for all $t\geq 1$, i.e., that for all $t\geq 1$, it holds that $\Vert{\X_{t+1}-\Y_t}\Vert_F\leq 1$, and $\X_{t+1}\in\mP_{n,k}$, i.e., $\rank(\X_{t+1})=k$.  The proof is by induction. For the base case $t=1$, we first note that using Lemma \ref{lem:distXY} we have that,
\begin{align}\label{eq:qr_thm:1}
\Vert{\X_{2}-\Y_1}\Vert_F^2 &\leq \frac{\eta}{1-\eta\beta}\left({f(\Y_1) - f(\X_{2})}\right) \leq \frac{\eta}{1-\eta\beta}\left({f(\Y_{1})-f(\X^*)}\right)  \nonumber \\
&\underset{(a)}{\leq} \frac{\eta}{1-\eta\beta}\left({\langle{\X^*-\Y_1,-\nabla{}f(\X^*)}\rangle + \frac{\beta}{2}\Vert{\Y_1-\X^*}\Vert_F^2}\right) \nonumber \\
&\underset{(b)}{\leq} \frac{\eta}{1-\eta\beta}\left({G+\frac{\beta}{2}}\right)\Vert{\Y_1-\X^*}\Vert_F^2,
\end{align}
where (a) follows from the $\beta$-smoothness of $f(\cdot)$, and (b) follows from Lemma \ref{lem:aux} and recalling that under Assumption \ref{ass:gap}, $\X^*$ is the projection matrix onto the span of the top $k$ eigenvectors of $-\nabla{}f(\X^*)$ (see Lemma \ref{lem:opt}).

Note that for $\eta\in(0,\beta)$, 
$\frac{\eta}{1-\eta\beta}\left({G+\frac{\beta}{2}}\right) \leq 1 ~ \Longleftrightarrow ~  \eta\left(G + \frac{3\beta}{2}\right) \leq 1$,
which clearly holds for our choice of step-size $\eta = \frac{1}{5\max\{\beta,G\}}$.

Thus, noting that for our choice of step-size and  initialization assumption it holds that $\Vert{\Y_1 - \X^*}\Vert_F \leq 1$, we indeed have that  $\Vert{\X_2 -\X_1}\Vert_F \leq 1$. Also, combining the initialization condition for $\Y_1$ listed in the theorem, together with Lemma \ref{lem:rad}, immediately implies that $\rank(\X_2) =k$. Thus, the base case $t=1$ of the induction holds.
 
Suppose now the induction holds for all $i\in\{1,\dots,t-1\}$, for some $t\geq 2$, and we will prove it for $t$. 
Using Lemma \ref{lem:distXY} we have that,
\begin{align}\label{eq:qr_thm:2}
\Vert{\X_{t+1}-\Y_t}\Vert_F^2 &\leq \frac{\eta}{1-\eta\beta}\left({f(\Y_{t})-f(\X_{t+1})}\right) \leq \frac{\eta}{1-\eta\beta}\left({f(\Y_{t})-f(\X^*)}\right) \nonumber \\
&\underset{(a)}{\leq} \frac{\eta}{1-\eta\beta}\left({f(\Y_{1})-f(\X^*)}\right) \underset{(b)}{\leq} \frac{\eta}{1-\eta\beta}\left({G+\frac{\beta}{2}}\right)\Vert{\Y_1-\X^*}\Vert_F^2,
\end{align}
where (a) follows by using the induction hypothesis for all $i\leq t$ together with  Lemma \ref{lem:errorDec}, which guarantees that $f(\Y_t) \leq f(\Y_1)$,  and (b) follows from the same steps as in \eqref{eq:qr_thm:1}.

Since we have already established that the RHS of \eqref{eq:qr_thm:2} is upper-bounded by $1$ in the base case of the induction, it follows that $\Vert{\X_{t+1}-\Y_t}\Vert_F \leq 1$.

Using the quadratic growth of $f(\cdot)$ (Lemma \ref{lem:quad}) we have that,
\begin{align*}
\Vert{\Y_t-\X^*}\Vert_F^2 &\leq \frac{2}{\delta}\left({f(\Y_t) - f(\X^*)}\right) \underset{(a)}{\leq} \frac{2}{\delta}\left({f(\Y_1) - f(\X^*)}\right)  \\
&\underset{(b)}{\leq} \frac{2}{\delta}\frac{\eta}{1-\eta\beta}\left({G+\frac{\beta}{2}}\right) \Vert{\Y_1-\X^*}\Vert_F^2,
\end{align*}
where again, (a) follows by using the induction hypothesis for all $i\leq t$ together with  Lemma \ref{lem:errorDec} which guarantees that $f(\Y_t) \leq f(\Y_1)$, and (b) follows from \eqref{eq:qr_thm:1}.

Thus, using the fact that for our choice of step-size it holds that $\frac{\eta}{1-\eta\beta}\left({G+\frac{\beta}{2}}\right) \leq 1$, using the initalization assumption on $\Y_1$, and invoking Lemma \ref{lem:rad}, it follows that indeed $\rank(\X_{t+1})=k$, i.e., $\X_{t+1}\in\mP_{n,k}$, and thus the induction holds for step $t$ as well.
\end{proof}

\section{Frank-Wolfe Analysis}
In this section we  prove Theorem \ref{fw_theorem}. Our analysis extends the one in \cite{garber2019linear} which only considered the case $k=1$.

We begin with a lemma, whose proof is similar to the arguments used in the proof
of Lemma \ref{lem:quad}, which will be essential to proving the local linear convergence of Frank-Wolfe under Assumption \ref{ass:gap}.

\bigskip \begin{lemma} \label{lem:miniquad}
Let $\X\in \mF_{n,k}$ and assume that $\lambda_{n-k}(\nabla f(\X))-\lambda_{n-k+1}(\nabla f(\X)) \geq \delta_{\X} > 0$. Then, for $\V\in\argmin_{\P\in\mP_{n,k}}\langle{\P,\nabla{}f(\X)}\rangle$ it holds that, 
\begin{align*}
\langle \X-\V,\nabla f(\X) \rangle \geq \dfrac{\delta_\X}{2}\| \X-\V\| _{F}^{2}.
\end{align*}
\end{lemma}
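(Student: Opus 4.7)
The plan is to adapt the argument of Lemma \ref{lem:quad} almost verbatim, with two modifications: (i) the role that $\nabla f(\X^{*})$ played for the optimal solution $\X^{*}$ is now played by $\nabla f(\X)$ at the arbitrary feasible point $\X$, and (ii) the convexity inequality $f(\X)-f(\X^{*})\geq\langle \X-\X^{*},\nabla f(\X^{*})\rangle$ that opened the proof of Lemma \ref{lem:quad} is no longer needed, since the quantity we wish to lower-bound, $\langle \X-\V,\nabla f(\X)\rangle$, is already in inner-product form.

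Concretely, I would write the eigendecomposition $\nabla f(\X)=\sum_{i=1}^n\lambda_i\u_i\u_i^{\top}$ with $\lambda_1\geq\dots\geq\lambda_n$, and use Ky Fan's maximum principle to identify $\V=\sum_{i=n-k+1}^n\u_i\u_i^{\top}$, so that $\langle\V,\nabla f(\X)\rangle=\sum_{i=n-k+1}^n\lambda_i$. Expanding
\[
\langle \X-\V,\nabla f(\X)\rangle=\sum_{i=1}^n\lambda_i\,\u_i^{\top}\X\u_i-\sum_{i=n-k+1}^n\lambda_i,
\]
I would split the first sum at index $n-k$, apply the eigen-gap hypothesis $\lambda_i\geq\lambda_{n-k+1}+\delta_{\X}$ for $i\leq n-k$, and then use $\lambda_i\leq\lambda_{n-k+1}$ for $i\geq n-k+1$ together with $1-\u_i^{\top}\X\u_i\geq 0$ (which follows from $\X\preceq\I$). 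Combining these and using $\sum_{i=1}^n\u_i^{\top}\X\u_i=\trace(\X)=k$, exactly as in the chain of inequalities leading to Eq.~\eqref{quad:eq:2} in the proof of Lemma \ref{lem:quad}, yields
\[
\langle \X-\V,\nabla f(\X)\rangle\;\geq\;\delta_{\X}\Big(k-\sum_{i=n-k+1}^n\u_i^{\top}\X\u_i\Big).
\]

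For the other side of the inequality, I would compute
\[
\|\X-\V\|_F^2=\|\X\|_F^2+\|\V\|_F^2-2\langle\X,\V\rangle=\|\X\|_F^2+k-2\sum_{i=n-k+1}^n\u_i^{\top}\X\u_i,
\]
and use $\|\X\|_F^2=\sum_i\lambda_i(\X)^2\leq\sum_i\lambda_i(\X)=k$ (since $0\preceq\X\preceq\I$) to obtain the upper bound $\|\X-\V\|_F^2\leq 2\big(k-\sum_{i=n-k+1}^n\u_i^{\top}\X\u_i\big)$, paralleling Eq.~\eqref{quad:eq:3}. Chaining the two estimates gives the claim.

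The proof is essentially routine once one recognizes the parallel with Lemma \ref{lem:quad}; the only subtlety to watch is that the sign of $\lambda_{n-k+1}$ is unconstrained, so one must be careful that the inequality $\lambda_i(1-\u_i^{\top}\X\u_i)\leq\lambda_{n-k+1}(1-\u_i^{\top}\X\u_i)$ for $i\geq n-k+1$ is justified purely by $\lambda_i\leq\lambda_{n-k+1}$ and the nonnegativity of $1-\u_i^{\top}\X\u_i$, independently of the sign of $\lambda_{n-k+1}$. No other obstacle is anticipated.
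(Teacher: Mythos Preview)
Your proposal is correct and matches the paper's own approach exactly: the paper's proof of Lemma~\ref{lem:miniquad} simply states that one should follow the proof of Lemma~\ref{lem:quad}, replacing $\X^{*}$ by $\V$ and $\nabla f(\X^{*})$ by $\nabla f(\X)$, which is precisely what you do. Your explicit identification of the two modifications (dropping the opening convexity step and substituting the projection $\V$ for $\X^{*}$ via Ky Fan) and your care about the sign-independence in the step $\lambda_i(1-\u_i^{\top}\X\u_i)\leq\lambda_{n-k+1}(1-\u_i^{\top}\X\u_i)$ are both on point.
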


\begin{proof}
The proof follows the same lines as the proof of Lemma \ref{lem:quad}, but replacing $\X^*$ with $\X$, and noting that $\V$ is the (unique) projection matrix onto the span of the top $k$ eigenvectors of $-\nabla{}f(\X)$, similarly to the use of $\X^*$ as the projection matrix onto the span of the top $k$ eigenvectors of $-\nabla{}f(\X^*)$ in the proof of Lemma \ref{lem:quad}.
\end{proof}

\begin{algorithm}
\caption{Frank-Wolfe with line-search  for the Fantope}\label{alg:FW}
\begin{algorithmic}[1]
\State $\X_1 \gets$ arbitrary point in $\mF_{n,k}$
\For{$t=1... $}
      \State $\V_t \leftarrow \argmin_{\V\in\mP_{n,k}}\langle{\V,\nabla{}f(\X_t)}\rangle$
      \State Choose step size $\eta_t \in [0,1]$ using one of the two options:

      \State First Option : $\eta_t \leftarrow \argmin_{\eta\in[0,1]}f((1-\eta)\X_t+\eta\V_t)$

      \State Second Option : $\eta_t \leftarrow \argmin_{\eta\in[0,1]} f(\X_{t})+\eta\langle \V_{t}-\X_{t}, \nabla f(\X_{t}) \rangle+\frac{\eta^{2}\beta}{2}\|\V_{t}-\X_{t}\|_{F}^{2}$

      \State $\X_{t+1} \leftarrow (1-\eta_t)\X_t+\eta_t\V_t$
\EndFor
\end{algorithmic}
\end{algorithm}

\begin{theorem}[Formal version of Theorem \ref{fw_theorem}]
Let $\{\X_t\}_{t\geq1}$ be a sequence produced by Algorithm \ref{alg:FW} and denote $\forall t\geq1$, $h_t :=f(\X_t)-f(\X^*)$. Then, 
\begin{align} \label{fw:eq:1}
\forall t\geq1 : \quad h_t=O(k\beta/t).
\end{align}
In addition, if Assumption \ref{ass:gap} holds with parameter $\delta > 0$, then there exists $T_0=O\left({k(\beta/\delta)^3}\right)$ such that, 
\begin{align}\label{fw:eq:2}
\forall t\geq T : \quad h_{t+1}\leq h_{t}\left({1-\min\{ \dfrac{\delta}{12\beta}, \dfrac{1}{2} \}}\right)).
\end{align}
Finally, under Assumption \ref{ass:gap}, we have that,
\begin{align}\label{fw:eq:3}
\forall t\geq 1 : \quad \|\V_t-\X^*\|_F^2=O\left({\dfrac{\beta^2}{\delta^3}h_t}\right).
\end{align}
\end{theorem}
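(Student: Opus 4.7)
The plan is to establish the three claims in order, using Frank--Wolfe's standard descent inequality as the common backbone and then invoking Lemma \ref{lem:miniquad} together with the Davis--Kahan perturbation bound to upgrade the sublinear rate into a linear one in a neighborhood of $\X^*$.

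\emph{Claim \eqref{fw:eq:1} (global $O(k\beta/t)$ rate).} The Frobenius diameter of $\mF_{n,k}$ is at most $2\sqrt{k}$, since any $\X\in\mF_{n,k}$ obeys $\Vert{\X}\Vert_F^2=\sum_i\lambda_i(\X)^2\leq \sum_i\lambda_i(\X)=k$. Let $g_t:=\langle{\X_t-\V_t,\nabla f(\X_t)}\rangle$ denote the Frank--Wolfe gap and $d_t:=\Vert{\V_t-\X_t}\Vert_F^2\leq 4k$. Either line-search option in Algorithm \ref{alg:FW} combined with $\beta$-smoothness gives the textbook descent inequality $f(\X_{t+1})\leq f(\X_t)-\min\{g_t/2,\,g_t^2/(2\beta d_t)\}$, and since $h_t\leq g_t$ by convexity, the standard Frank--Wolfe induction on $t$ produces $h_t=O(\beta k/t)$.

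\emph{Claim \eqref{fw:eq:2} (local linear rate).} Define the running eigengap $\delta_{\X_t}:=\lambda_{n-k}(\nabla f(\X_t))-\lambda_{n-k+1}(\nabla f(\X_t))$. By Weyl's inequality, $\beta$-smoothness and Lemma \ref{lem:quad}, one has $\delta_{\X_t}\geq \delta-2\beta\Vert{\X_t-\X^*}\Vert_F\geq \delta-2\beta\sqrt{2h_t/\delta}$, so whenever $h_t\leq \delta^3/(18\beta^2)$ we have $\delta_{\X_t}\geq \delta/3$. The rate from Claim \eqref{fw:eq:1} guarantees that this occurs for every $t\geq T_0=O(k(\beta/\delta)^3)$. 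For $t\geq T_0$, Lemma \ref{lem:miniquad} applied at $\X_t$ (legitimate because $\delta_{\X_t}>0$) yields $g_t\geq (\delta_{\X_t}/2)\,d_t\geq (\delta/6)\,d_t$; plugging this into the descent inequality and using $h_t\leq g_t$ once more delivers exactly $h_{t+1}\leq h_t\bigl(1-\min\{\delta/(12\beta),\,1/2\}\bigr)$.

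\emph{Claim \eqref{fw:eq:3} (Frank--Wolfe atoms track $\X^*$).} By Lemma \ref{lem:opt}, $\X^*$ is the spectral projection onto the top-$k$ eigenspace of $-\nabla f(\X^*)$, while by construction $\V_t$ is the spectral projection onto the top-$k$ eigenspace of $-\nabla f(\X_t)$. Writing $\Vert{\V_t-\X^*}\Vert_F^2=2\Vert{\sin\Theta}\Vert_F^2$ for the principal angles $\Theta$ between the two subspaces, the Davis--Kahan $\sin\Theta$ theorem with the gap $\delta$ at $-\nabla f(\X^*)$ gives $\Vert{\sin\Theta}\Vert_F\leq \Vert{\nabla f(\X_t)-\nabla f(\X^*)}\Vert_F/\delta$, valid as soon as $\Vert{\nabla f(\X_t)-\nabla f(\X^*)}\Vert_2<\delta/2$. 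Combining with $\beta$-smoothness and Lemma \ref{lem:quad} produces $\Vert{\V_t-\X^*}\Vert_F^2\leq 4\beta^2\Vert{\X_t-\X^*}\Vert_F^2/\delta^2=O(\beta^2 h_t/\delta^3)$. For the early iterations where the Davis--Kahan precondition fails, I use the trivial bound $\Vert{\V_t-\X^*}\Vert_F^2\leq 2k$; since in that regime $h_t$ is bounded below by a fixed multiple of $\delta^3/\beta^2$, this is absorbed into the (possibly $k$-dependent) implicit constant.

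\emph{Main obstacle.} The main technical obstacle is the Davis--Kahan step: one must first ensure that the top-$k$ eigenspace of $-\nabla f(\X_t)$ is well-defined (i.e.\ $\delta_{\X_t}>0$) and then invoke the correct version of the $\sin\Theta$ theorem for \emph{subspace} projections, not individual eigenvectors. A milder difficulty is that Claim \eqref{fw:eq:3} is stated for all $t\geq 1$ even though its content is really local; bridging the early iterations via the crude $O(k)$ fallback is what forces the implicit big-$O$ constant to depend on $k$.
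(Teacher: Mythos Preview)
Your proof is correct and follows essentially the same route as the paper: standard Frank--Wolfe descent for \eqref{fw:eq:1}, Weyl plus quadratic growth to transfer the eigengap to $\nabla f(\X_t)$ and then Lemma~\ref{lem:miniquad} for \eqref{fw:eq:2}, and Davis--Kahan for \eqref{fw:eq:3}. The only substantive difference is in the Davis--Kahan step: the paper invokes the unconditional variant of Yu--Wang--Samworth, which requires a gap only at the reference matrix $-\nabla f(\X^*)$ (not at $-\nabla f(\X_t)$) and therefore gives $\|\V_t-\X^*\|_F^2\leq 16\beta^2 h_t/\delta^3$ for \emph{all} $t\geq 1$ with a dimension-free constant, whereas your precondition/fallback split forces the implicit constant in \eqref{fw:eq:3} to scale with $k$. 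Using the unconditional $\sin\Theta$ bound would let you drop the case analysis and recover the paper's sharper constant.
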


\begin{proof}
Result \eqref{fw:eq:1} follows from standard convergence results for the Frank-Wolfe method with line-search \cite{jaggi2013revisiting}, and the fact that the Euclidean diameter of the Fantope $\mF_{n,k}$ is $\sqrt{2k}$.

For the second part, observe that under Assumption \ref{ass:gap}, using the the $\beta$-smoothness of $f(\cdot)$, the quadratic growth result (Lemma \ref{lem:quad}) and \eqref{fw:eq:1}, we have that for all $t\geq 1$, 
\begin{align*}
\| \nabla f(\X_t) - \nabla f(\X^*) \|_{F} \leq \beta \|\X_t-\X^*\| \leq \beta \sqrt{\dfrac{2h_t}{\delta}} = O\left({\sqrt{\dfrac{k\beta^3}{t\delta}}}\right).
\end{align*}
Thus, for some $T_0=O\left({k(\beta/\delta)^3}\right)$ we have that,
\begin{align*}
\forall t\geq T_0: \quad \| \nabla f(\X_t) - \nabla f(\X^*) \|_{F} \leq \dfrac{\delta}{3}.
\end{align*}
Let us write the eigen-decomposition of $\nabla f(\X_t)$ as $\nabla f(\X_t)=\sum_{i=1}^{n} \lambda_i \v_i \v_i^{\top}$.
Using Weyl's inequality for the eigenvalues we can write for every $t\geq T_0$,
\begin{flalign*}
\lambda_{n-k}-\lambda_{n-k+1} &\geq \lambda_{n-k}(\nabla f(\X^*))-\lambda_{n-k+1}(\nabla f(\X^*)) -2\Vert{\nabla{}f(\X) - \nabla{}f(\X^*)}\Vert_F \nonumber\\
& \geq \delta - \dfrac{2\delta}{3} = \dfrac{\delta}{3}.
\end{flalign*}
Thus, for all $t\geq T_0$, $\lambda_{n-k+1}<\lambda_{n-k}$ and the matrix $\V_t$ is uniquely defined and given by $\V_t = \sum_{i=n-k+1}^n\v_i\v_i^{\top}$.  
Using $\X_{t+1}=(1-\eta_{t})\X_{t} + \eta_{t}\V_{t}$, the smoothness of $f(\cdot)$, and the fact that $\eta_t$ is chosen via line-search, we have that,
\begin{align*}
\forall \eta\in[0,1] : f(\X_{t+1}) \leq f(\X_t)+\eta\langle \V_{t}-\X_t, \nabla f(\X_t) \rangle + \dfrac{\eta^2 \beta}{2} \| \V_{t}-\X_t \|_F^2.
\end{align*}
Subtracting $f(\X^*)$ from both sides and using Lemma \ref{lem:miniquad} with gap $\delta_\X=\delta/3$, we have that for all $t\geq T_0$,
\begin{align*}
\forall \eta\in[0,1]:\quad h_{t+1}&\leq h_t+\dfrac{\eta}{2} \langle \V_{t} -\X_t, \nabla f(\X_t)\rangle +( \dfrac{\eta^2 \beta}{2} - \dfrac{\eta \delta}{12})\| \V_{t}-\X_t \|_F^2 \\
&\leq (1-\dfrac{\eta}{2})h_t+( \dfrac{\eta^2 \beta}{2}- \dfrac{\eta \delta}{12})\| \V_{t}-\X_t \|_F^2,
\end{align*}
where the last inequality follows from the convexity of $f(\cdot)$.

Now, if $\dfrac{\delta}{6\beta} \leq 1$, by setting $\eta=\dfrac{\delta}{6\beta}$ we have that $h_{t+1}\leq (1-\dfrac{\delta}{12\beta})h_{t}$.
Otherwise, $\delta>6\beta$ and so, setting $\eta=1$, we get that $h_{t+1}\leq \dfrac{1}{2} h_t$, which proves Result \eqref{fw:eq:2}.

Finally, for the third part of the lemma, recalling that $\V_t$ and $\X^*$ are the projection matrices onto the span of the top $k$ eigenvectors of $-\nabla{}f(\X_t)$ and $-\nabla{}f(\X^*)$, respectively, using the well known Davis-Kahan sin$\theta$ theorem (see for instance \cite{yu2015useful}), we have that for all $t\geq 1$,
\begin{align*}
 \|\V_t-\X^*\|_F^2\leq \dfrac{8 \|\nabla f(\X_t)-\nabla f(\X^*)\|_F^2}{(\lambda_{n-k} (\nabla f(\X^*)) - \lambda_{n-k+1} (\nabla f(\X^*)))^2} \leq \dfrac{8\beta^2 \|\X_t-\X^*\|_F^2}{\delta^2} \leq \dfrac{16\beta^2 h_t}{\delta^3},
\end{align*}
where the last inequality follows from the quadratic growth result, Lemma \ref{lem:quad}. Thus, Result \eqref{fw:eq:3} follows.
\end{proof}

\end{document}